\let\@wraptoccontribs\wraptoccontribs
\newtheorem{theorem}{Theorem}[section]
\newtheorem{prop}[theorem]{Proposition}
\newtheorem{proposition}[theorem]{Proposition}
\newtheorem{lemma}[theorem]{Lemma}
\newtheorem{corollary}[theorem]{Corollary}
\newtheorem{problem}{Problem}
\newtheorem{conjecture}[problem]{Conjecture}
\theoremstyle{definition}
\theoremstyle{remark}
\newtheorem{remark}[theorem]{Remark}
\newtheorem*{claim*}{Claim}
\newcommand{\bd}{{\partial}}
\newcommand{\bH}{\mathbb{H}}
\newcommand{\reals}{{\mathbb R}}
\newcommand{\natls}{{\mathbb N}}
\newcommand{\zed}{{\mathbb Z}}
\newcommand{\cL}{{\mathcal{L}}}
\newcommand{\cT}{{\mathcal{T}}}
\newcommand{\cG}{{\mathcal{G}}}
\newcommand{\cN}{{\mathcal{N}}}
\newcommand{\cA}{{\mathcal{A}}}
\newcommand{\cB}{{\mathcal{B}}}
\newcommand{\cE}{{\mathcal{E}}}
\newcommand{\cR}{{\mathcal{R}}}
\newcommand{\cD}{{\mathcal{D}}}
\newcommand{\cH}{{\mathcal{H}}}
\newcommand{\C}{{\mathbb{C}}}
\newcommand{\R}{{\mathbb{R}}}
\newcommand{\Z}{{\mathbb{Z}}}
\newcommand{\cM}{{\mathcal{M}}}
\newcommand{\cP}{{\mathcal{P}}}
\newcommand{\N}{\mathbb{N}}
\newcommand{\nn}[1]{(\ref{#1})}
\newcommand{\om}{\omega}
\renewcommand{\part}[2]{\frac{\partial #2}{\partial x_{#1}}}
\newcommand{\op}{\operatorname}
\newcommand{\acou}[2]{\langle{#1},{#2}\rangle}
\numberwithin{equation}{section}
\begin{document}

\title[Conformal invariants from nodal sets. I]{Conformal invariants
from nodal sets. I. Negative Eigenvalues and Curvature Prescription}
 
\contrib[With an Appendix by]{A.\ Rod Gover and Andrea {Malchiodi*}}

\author[Y. Canzani]{Yaiza Canzani}
\address{Department of Mathematics and Statistics, McGill University, Montr\'eal,
Ca\-na\-da.} 
\email{canzani@math.mcgill.ca}

\author[A.R. Gover]{A.\ Rod Gover}
\address{Department of Mathematics, University of Auckland, New Zealand \&
Mathematical Sciences Institute, Australian National University, Canberra, 
Australia.}
\email{gover@math.auckland.ac.nz}

\author[D. Jakobson]{Dmitry Jakobson}
\address{Department of Mathematics and
Statistics, McGill University, Montr\'eal, Ca\-na\-da.}
\email{jakobson@math.mcgill.ca}

\author[R. Ponge]{Rapha\"el Ponge}
\address{Department of Mathematical Sciences, Seoul National University, 
Seoul, Korea.} 
\email{ponge.snu@gmail.com}

\keywords{Spectral geometry, conformal geometry, nodal sets, $Q_k$-curvature}

\subjclass[2010]{58J50, 53A30, 53A55, 53C21}

\thanks{Y.C.\ was supported by Schulich Fellowship of McGill University (Canada). 
A.R.G.\ was supported by Marsden Grant 10-UOA-113 (New Zealand). D.J.\ was
  supported by NSERC and FQRNT grants and Dawson Fellowship of McGill 
University (Canada).  
  R.P.\ was supported by JSPS Grant-in-Aid (Japan) and Research Resettlement 
Fund of Seoul National University (Korea).\\
\indent   *AM: SISSA, Trieste, Italy. \emph{E-mail address}: malchiod@sissa.it}

\begin{abstract}
In this paper, we study conformal invariants that arise from nodal sets and negative eigenvalues of conformally 
covariant operators; more specifically, the GJMS operators, which include the Yamabe and Paneitz operators. We give 
several  applications to curvature prescription problems. We establish a version in conformal geometry of Courant's Nodal Domain Theorem. 
We also show that on any manifold of dimension
$n\geq 3$, there exist many metrics for which our invariants are nontrivial. 
We prove that the Yamabe operator can have an arbitrarily large number of 
negative eigenvalues on any manifold of dimension $n\geq 3$. We obtain
similar results for some higher order GJMS operators on some Einstein and Heisenberg manifolds. We describe the invariants 
arising from the Yamabe and Paneitz operators associated to left-invariant metrics on Heisenberg manifolds. Finally, in 
the appendix, the 2nd named author and Andrea Malchiodi study the $Q$-curvature prescription problems for
non-critical $Q$-curvatures.
\end{abstract}

\maketitle

\section{Introduction} 
%A {\em nodal set} $\cN(u)$ of a real-valued function $u$ on a manifold $M$ is defined as the set 
%of points $\{x\in M:u(x)=0\}$.  The connected components of $M\setminus\cN(u)$ are called 
%{\em nodal domains}; the function $u$ has constant sign in one of its nodal domains. 

Nodal sets (i.e., zero loci) of eigenfunctions were first considered in the 18th century by Ernst 
Chladni in his 1787 paper \emph{Entdeckungen \"uber die Theorie des Klanges} on 
vibrating plates. 
 More recently, some important results about nodal sets were obtained by 
Courant~\cite{Courant},  
 Pleijel~\cite{Pleijel}, Cheng-Yau~\cite{Cheng-Yau} and 
Donnelly-Fefferman~\cite{DF}, among others.  For high energy eigenfunctions of the Laplacian geometry and topology of nodal 
sets and nodal domains (i.e., connected components of complements of nodal sets) have 
also been studied in quantum chaos, in particular in connection to random wave theory (see, e.g., \cite{BS, NS, TZ}).

Conformally invariant operators with leading term a power of the
Laplacian $\Delta_{g}$ have been central in mathematics and physics
for over 100 years. The earliest known of these is the conformally
invariant wave operator which was first constructed for the study of
massless fields on curved spacetime (see, e.g., Dirac~\cite{Dirac}). Its
Riemannian signature elliptic variant, usually called the Yamabe
operator, controls the transformation of the Ricci scalar curvature
under conformal rescaling and so plays a critical role in the Yamabe
problem on compact Riemannian manifolds. A conformal operator with
principal part $\Delta_{g}^{2}$ is due to Paneitz~\cite{Paneitz}, and
sixth-order analogues were constructed by Branson~\cite{Br} and W\"unsch~\cite{Wu}.

Two decades ago Graham, Jenne, Mason and Sparling (GJMS) solved a
major existence problem in~\cite{GJMS}, where they used the ambient
metric of Fefferman-Graham~\cite{FG85, FG12} to show the existence of
conformally invariant differential operators $P_{k,g}$ (to be referred
to as the GJMS operators) with principal part $\Delta_{g}^{k}$.  In
odd dimensions, $k$ is any positive integer, while in dimension $n$
even, $k$ is a positive integer no more than $\frac{n}{2}$. The $k =
1$ and $k = 2$ cases recover the Yamabe and Paneitz operators,
respectively. Furthermore, the GJMS operators are intimately related to the
$Q_{k}$-curvatures $Q_{k,g}$ identified by Branson~\cite{BrSeoul,Tomsharp} 
(see also Section~\ref{section:Qcurv}); the $Q_{\frac{n}{2},g}$-curvature is also known as 
Branson's $Q$-curvature. 

The aim of this article is to study conformal invariants arising from
nodal sets and negative eigenvalues of GJMS operators. In particular,
we give some applications to curvature prescription problems.  Various
authors have considered spectral theoretic functions associated to
conformally covariant operators, e.g., Parker-Rosenberg~\cite{PR},
Osgood-Phillips-Sarnak~\cite{OPS}, Branson-{\O}rsted~\cite{BrO91a},
Branson-Chang-Yang~\cite{BCY}, Chang-Yang~\cite{CY}, and
Okikiolu~\cite{Ok}.  However, to our knowledge this is the first time
that nodal sets have been considered generally in the setting of
conformal geometry.

A first observation is that nodal sets and nodal domains of any
null-eigenfunction of a GJMS operator are conformal invariants
(Proposition~\ref{prop:nodal-sets-Pk}).  In case of the critical GJMS
operator $P_{\frac{n}{2},g}$, it can be further shown this feature is
actually true for any level set
(Proposition~\ref{prop:nodal-sets-Pcritical}). Notice that these
results actually hold for more general conformally invariant
operators, including the fractional conformal powers of the Laplacian
(see Remark~\ref{rem:Nodal.general}).

Here we also look at the negative eigenvalues of the GJMS operators. In
particular, we show that the number of negative eigenvalues of a GJMS
operator is a conformal invariant
(Theorem~\ref{prop:confinv-negative-eigenvalues}). It was shown by
Kazdan-Warner~\cite{KW75} that the sign of the first eigenvalue of the
Yamabe operator is a conformal invariant. We prove that this result
actually holds for all GJMS operators
(Theorem~\ref{thm:negative.sign-first-eigenvalue}). Once again these
results hold for general conformally invariant operators
(see~Remark~\ref{rem:negative.general}) acting between the same spaces. 

A natural question is whether, for a given operator, the number of negative
eigenvalues can become arbitrarily large as the conformal class
varies. We prove that this indeed the case for the Yamabe operator on
any connected manifold (Theorem~\ref{neg:eig:large}). The proof relies
on a deep existence result of Lokhamp~\cite{Lo96}. We give a more
explicit proof in case of products with hyperbolic surfaces (see
Proposition~\ref{prop:negative.Yamabe.product-hyperbolic-surface}).
Furthermore, on the product of a hyperbolic manifold with a hyperbolic
surface we construct hyperbolic metrics for which various higher order
GJMS operators have arbitrary large numbers of negative eigenvalues
(see Theorem~\ref{CriticalGJMS:hyperbolic} for the precise statement).
In addition, we prove a version of Courant's nodal domain theorem in
conformal geometry: if the Yamabe operator has  $m$ negative
eigenvalues, then its null-eigenfunctions have at most $m+1$~nodal
domains (Theorem~\ref{number:nodal}).

The problem of prescribing the curvature (Gaussian or scalar) of a
given compact manifold is very classical and is known as the {\em
  Kazdan-Warner problem} (see \cite{Au, B, KW} and the references
therein).  The extension of this question to Branson's $Q$-curvature
has proved to be an important proxblem for the development of
mathematical ideas (see, e.g.,
\cite{BaFaRe,Brendle,CGY,CY,DM,DR,MalStr,Nd:CQMAD}).

We look at some constraints on curvature prescription in terms of nodal 
sets. A main result is Theorem~\ref{yamabe on nodal domains} which states that, 
if $u$ is a null-eigenfunction for the Yamabe operator $P_{1,g}$ and 
$\Omega$ is a nodal domain of $u$, then
\begin{equation*}
    \int_{\Omega} |u|\,P_{1,g}(v) \, dv_g=- \int_{\partial \Omega}
v\, \| {}^{g}\nabla u \|_g\, d\sigma_g \qquad \forall v \in C^{\infty}(M,\R).
\end{equation*}
Another main result is Theorem~\ref{int:negative} which
asserts that, if a function $f$ is the scalar curvature of some metric
in the conformal class of $g$, then there is a smooth function
$\omega>0$ such that, for any null-eigenfunction $u$ of the Yamabe
operator and any nodal domain $\Omega$ of $u$,
\begin{equation*}
    \int_{\Omega}f|u| \omega \, dv_{g}<0. 
\end{equation*}
As a corollary, we see that, if $R_{\hat{g}}$ is the scalar curvature
of some metric in the conformal class of $g$, then $R_{\hat{g}}$
cannot be positive everywhere on $\Omega$.

We illustrate our results on nodal sets and negative eigenvalues in
the case of the Yamabe and Paneitz operators associated to left-invariant
metrics on a  Heisenberg manifold $\Gamma\backslash \bH_{d}$, obtained
as the quotient of the $(2d+1)$-dimensional Heisenberg $\bH_{d}$ group
by some lattice $\Gamma$. Using the representation theory of the
Heisenberg group, we are able to give spectral resolutions for the
Yamabe and Paneitz operators (see Proposition~\ref{Yamabe:nil} and
Proposition~\ref{prop:spectrum-Paneitz-Heisenberg}). Interestingly
enough, some of the eigenfunctions involve theta-functions. As a
result, this enables us to explicitly describe their nodal sets (see
Proposition~\ref{prop:negative-eigenvalues-Yamabe-Heisenberg}).
Furthermore, we can give lower bounds for the number of negative
eigenvalues of the Yamabe and Paneitz operators, which shows that
these operators can have an arbitrarily large number of negative
eigenvalues; the bounds involve the volume of $\Gamma\backslash
\bH_{d}$ (see Proposition~\ref{prop:neg-Yamabe-Heisenberg} and
Proposition~\ref{Paneitz:neg:heisenberg}).

For Branson's $Q$-curvature it was shown by
Malchiodi~\cite{MalchSIGMA} (when $\int Q=0$) and Gover~\cite{Gov10}
(for the general case) that if the kernel of the critical GJMS
operator contains nonconstant functions, then there is an
infinite-dimensional space of functions that cannot be the
$Q$-curvature of any metric in the conformal class.

In the appendix, by the second named author and Andrea Malchiodi, it
is shown that, surprisingly, similar results are available for the
non-critical $Q$-curvatures; these are the curvature quantities
$Q_{k,g}$, $k\neq \frac{n}{2}$.
The main result is Theorem \ref{main} which proves that for $0\neq
u\in \ker P_{k,g}$ any function $s_u$ on $M$, with the same strict
sign as $u$, cannot be $Q_{k,\widehat{g}}$ for any metric
$\widehat{g}$ in the conformal class. In particular this potentially
obstructs achieving constant $Q_{k}$-curvature (see Theorem
\ref{constants}). Theorem \ref{main} is also used to identify a space
$\mathcal{I}$ of functions, determined by the conformal structure (and
in general properly contained in $C^\infty(M,\mathbb{R})$), which
contains the range of $Q_{k,g}$, as $g$ ranges over the conformal
class (see Theorem \ref{consthm} for the precise statement).

Various open problems and conjectures are gathered in Section~\ref{section: open problems}. Further invariants will be considered 
in a forthcoming paper \cite{CGJP}. 

The paper is organized as follows.  In Section \ref{section:Qcurv}, we
review the main definitions and properties of the GJMS operators and
$Q$-curvatures.  In Section \ref{section: nodal sets}, we study the
nodal sets of GJMS operators.  In Section
\ref{section:negative-eigenvalues}, we study the negative eigenvalues
of GJMS operators. In Section \ref{section:scalar:sign}, we discuss
curvature prescription problems.  In Section \ref{section:
  heisenberg}, we study the nodal sets and negative eigenvalues of
Yamabe and Paneitz associated to left-invariant metrics on Heisenberg
manifolds. In Section~\ref{section: open problems}, we present various
open problems and conjectures.  Finally, the appendix by the second named
author and Andrea Malchiodi deals with $Q$-curvature prescriptions for
non-critical $Q$-curvatures.

The results of this paper were announced in~\cite{ERA}. 

\section{GJMS operators and $Q$-curvatures}\label{section:Qcurv}
Let $M$ be a Riemannian manifold of dimension $n\geq 3$. 
A \emph{conformally covariant differential operator} of biweight $(w,w')$ is a covariant 
differential operator
$P_{g}$ such that, under any conformal change of metric $\hat{g}:=e^{2\Upsilon}g$, 
$\Upsilon\in C^{\infty}(M, \R)$, it transforms according to the formula
   \begin{equation}\label{conf:covariant:change}
       P_{\hat{g}}=e^{-w'\Upsilon}P_{g}e^{w\Upsilon}.
   \end{equation}

An important example of a conformally invariant differential operator is the \emph{Yamabe operator} (a.k.a.~conformal
Laplacian),
  \begin{equation}
        P_{1,g}:=\Delta_{g}+\frac{n-2}{4(n-1)}R_g,
        \label{eq:Yamabe-operator}
  \end{equation}
where $R_g$ is the scalar curvature. This is a conformally invariant
operator of biweight $\left(\frac{n}{2}-1,\frac{n}{2}+1\right)$.

Another example is the \emph{Paneitz operator},
  \begin{equation}
    P_{2,g}:=\Delta_{g}^{2}+\delta  V d+\frac{n-4}{2}\left\{
    \frac{1}{2(n-1)}\Delta_{g}R_g+\frac{n}{8(n-1)^{2}}R_g^{2}-2|S|^{2}\right\},
    \label{eq:Paneitz-operator}
\end{equation}where $S_{ij}=\frac{1}{n-2}(\op{Ric_g}_{ij}-\frac{R_g}{2(n-1)}g_{ij})$
 is the Schouten-Weyl tensor and $V$ is the tensor
$V_{ij}=\frac{n-2}{2(n-1)} R_g g_{ij}-4S_{ij}$ acting on 1-forms
(i.e., $V(\omega_{i}dx^{i})=(V_{i}^{~j}\omega_{j})dx^{i}$). The Paneitz operator is a 
conformally invariant operator of biweight
$\left(\frac{n}{2}-2,\frac{n}{2}+2\right)$.

A generalization of the Yamabe and Paneitz operators is provided by
the GJMS operators.  They were constructed by Graham, Jenne, Mason and
Sparling in~\cite{GJMS} by using the ambient metric of
Fefferman-Graham~\cite{FG85, FG12} (see also~\cite{GP,GZ,Ju} for
 formulas for, and features of, the GJMS operators).

\begin{proposition}[\cite{GJMS}]
   For $k=1,\ldots,\frac{n}{2}$ when $n$ is even, and for all
   non-negative integers $k$ when $n$ is odd, there is a conformally
   invariant operator $P_{k}=P_{k,g}$ of biweight
   $\left(\frac{n}{2}-k,\frac{n}{2}+k\right)$ such that
\begin{equation}
        P_{k,g}=\Delta_{g}^{(k)} + \ \text{lower order terms}.
\end{equation}
\end{proposition}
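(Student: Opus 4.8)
The plan is to follow the ambient construction of Fefferman--Graham, which is precisely the tool cited in the statement. First I would form the metric bundle $\pi\colon\mathcal{G}\to M$ whose fibre over $x$ is the ray $\{t^{2}g_{x}:t>0\}$ of metrics in the conformal class; this bundle carries a tautological degenerate symmetric $2$-tensor and a free $\mathbb{R}_{+}$-action $\delta_{s}$, and it depends only on the conformal class $[g]$. I would then invoke the Fefferman--Graham theorem to produce the ambient space $\widetilde{\mathcal{G}}$, an $(n+2)$-dimensional thickening of $\mathcal{G}$, equipped with an ambient metric $\widetilde{g}$ that is homogeneous of degree $2$ under the dilations, pulls back the tautological tensor along $\mathcal{G}$, and is Ricci-flat to the appropriate order. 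The essential point to record here is that $(\widetilde{\mathcal{G}},\widetilde{g})$ is canonically associated to $(M,[g])$, so any operation performed invariantly on it descends to a conformally invariant operation on $M$.

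The heart of the construction is the interplay between homogeneity and the ambient Laplacian $\widetilde{\Delta}$. A conformal density of weight $w$ on $M$ is the same thing as a function on $\mathcal{G}$ homogeneous of degree $w$, and since $\widetilde{g}$ has degree $2$ the operator $\widetilde{\Delta}$ lowers homogeneity degree by $2$. Given $f\in C^{\infty}(M,\mathbb{R})$, I would view it as a density of weight $k-\tfrac{n}{2}$ (using the background $g$ to trivialise the density bundles) and extend it off $\mathcal{G}$ to a function $\widetilde{f}$ on $\widetilde{\mathcal{G}}$, homogeneous of degree $k-\tfrac{n}{2}$, that is harmonic to high order, i.e.\ $\widetilde{\Delta}\widetilde{f}=O(\rho^{N})$ for a defining function $\rho$ of $\mathcal{G}$. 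Then $\widetilde{\Delta}^{k}\widetilde{f}$ is homogeneous of degree $-\tfrac{n}{2}-k$, and its restriction to $\mathcal{G}$ is a density of weight $-\tfrac{n}{2}-k$; unwinding the weights with respect to $g$, this defines a differential operator
\begin{equation*}
    P_{k,g}f := \big(\widetilde{\Delta}^{k}\widetilde{f}\big)\big|_{\mathcal{G}}
\end{equation*}
of biweight exactly $\left(\frac{n}{2}-k,\frac{n}{2}+k\right)$. Conformal invariance is then automatic: a different representative $\widehat{g}=e^{2\Upsilon}g$ only changes the trivialisation of the density bundles, producing precisely the conjugation factors $e^{-w'\Upsilon}(\cdot)\,e^{w\Upsilon}$ of \nn{conf:covariant:change}. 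To identify the leading term I would pass to the Fefferman--Graham normal form for $\widetilde{g}$ adapted to $g$, in which $\widetilde{\Delta}$, acting on homogeneous functions near $\mathcal{G}$, reduces to $\Delta_{g}$ plus terms that are either of lower order or carry extra powers of $\rho$; iterating $k$ times and restricting yields $P_{k,g}=\Delta_{g}^{(k)}+(\text{lower order})$.

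The step I expect to be the real obstacle is the harmonic extension in \emph{even} dimensions, and it is exactly what forces the range $1\le k\le\tfrac{n}{2}$. In odd dimensions the ambient metric can be chosen Ricci-flat to infinite order and is essentially unique, so $\widetilde{f}$ extends harmonically to all orders and the construction runs for every $k$. When $n$ is even, however, the ambient metric is only Ricci-flat to order $\tfrac{n}{2}-1$, there is a genuine obstruction tensor at order $\tfrac{n}{2}$, and the attempt to solve $\widetilde{\Delta}\widetilde{f}=O(\rho^{N})$ for a density of weight $k-\tfrac{n}{2}$ meets an indicial obstruction precisely when one tries to push the extension past order $\tfrac{n}{2}$. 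This both caps $k$ at $\tfrac{n}{2}$ and makes the critical case $k=\tfrac{n}{2}$ delicate, where one must track logarithmic terms and verify that the resulting operator is well defined independently of the choices. Checking that these obstructions and ambiguities do not affect $\widetilde{\Delta}^{k}\widetilde{f}\big|_{\mathcal{G}}$ for $k\le\tfrac{n}{2}$ is the technical crux of the argument.
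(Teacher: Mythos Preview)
The paper does not give its own proof of this proposition: it is stated as a background result, attributed directly to Graham--Jenne--Mason--Sparling via the citation~\cite{GJMS}, and the surrounding text only records consequences and extensions (the obstruction in even dimensions, the Einstein factorisation~(\ref{eq:GJMS-Einstein}), etc.). So there is no ``paper's own proof'' to compare against.

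That said, your sketch is a faithful outline of the original GJMS argument via the Fefferman--Graham ambient metric, and your identification of the crux is accurate: the well-definedness of $\widetilde{\Delta}^{k}\widetilde{f}\big|_{\mathcal{G}}$ hinges on showing that the ambiguities in both the ambient metric and the harmonic extension lie in sufficiently high powers of the defining function that they do not survive restriction, and this is exactly what fails for $k>\tfrac{n}{2}$ in even dimensions. One small correction: in the original construction one does not first extend $\widetilde{f}$ to be ambient-harmonic and then apply $\widetilde{\Delta}^{k}$; rather one takes an \emph{arbitrary} homogeneous extension and shows that $\widetilde{\Delta}^{k}\widetilde{f}\big|_{\mathcal{G}}$ is independent of the extension (and of the ambiguity in $\widetilde{g}$) for $k$ in the stated range. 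Your version would force $P_{k,g}\equiv 0$ if taken literally, since $\widetilde{\Delta}\widetilde{f}=O(\rho^{N})$ with $N$ large would make $\widetilde{\Delta}^{k}\widetilde{f}$ vanish on $\mathcal{G}$.
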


When $n$ is even the ambient metric is obstructed at finite order by
Fefferman-Graham's obstruction tensor (see~\cite{FG85, FG12}). This is
a conformally invariant tensor which in dimension 4 agrees with the
Bach tensor.  As a result, the ambient metric construction of the GJMS
operators $P_{k}$ breaks down for $k>\frac{n}{2}$, when $n$ is
even. In fact, as proved by Graham~\cite{Gnon} in dimension 4 for
$k=3$ and by Gover-Hirachi~\cite{GH} in general, there do not exist
conformally invariant operators with same leading part as
$\Delta^{k}_{g}$ for $k>\frac{n}{2}$ when $n$ is even. For this
reason, the operator $P_{\frac{n}{2},g}$ is sometimes called the
\emph{critical GJMS operator}. Notice that for $P_{\frac{n}{2}}$ the
transformation law becomes
\begin{equation*}
    P_{\frac{n}{2},e^{2\Upsilon}g}=e^{-n\Upsilon}P_{\frac{n}{2},g} \qquad 
\forall \Upsilon\in C^{\infty}(M,\R).
\end{equation*}

When $n$ is even and the metric $g$ is conformally Einstein, the
Fefferman-Graham obstruction tensor vanishes, and a canonical ambient
metric exists all orders (in fact it exists on a collar), see
\cite{FG12} and references therein. It is also the case that on an
even dimensional conformally Einstein manifold the GJMS operator family may be
extended to all (even) orders in a canonical way \cite{Gov06}.  Furthermore,
when $g$ is actually Einstein, say $\op{Ric}_{g}=\lambda(n-1)g$ for
some $\lambda\in \R$, it was shown by Graham \cite{Gr03, FG12} and
Gover~\cite{Gov06} (see also Guillarmou-Naud~\cite{GN} for constant
sectional curvature spaces) that
\begin{equation}
    P_{k,g}=\prod_{1\leq j\leq k}\left(\Delta_{g}+\frac{\lambda}{4}(n+2j-2)(n-2j) \right).
    \label{eq:GJMS-Einstein}
\end{equation}

The GJMS operators $P_{k,g}$ are formally self-adjoint
(see~\cite{GZ,FG02}). Moreover, they are intimately related to the
$Q_{k}$-curvatures identified by Branson~\cite{BrSeoul,Tomsharp}.
For $k=1,\cdots , \frac{n}{2}-1$ when $n$ is even and
for $k \in \N_{0}$ when $n$ is odd, the $Q_{k}$-curvature is defined
by
\begin{equation}\label{Qk:def}
    Q_{k}=Q_{k,g}:=\frac{2}{n-2k}P_{k,g}(1).
\end{equation}
When $n$ is even, the $Q_{\frac{n}{2}}$-curvature  (a.k.a.~Branson's  
$Q$-curvature) is defined by analytic
continuation arguments (see~\cite{BrO,Br}; see also \cite{GZ,FG02}).

For instance, it follows
from~(\ref{eq:Yamabe-operator})--(\ref{eq:Paneitz-operator}) that
\begin{equation*}
    Q_{1,g}=\frac{1}{2(n-1)}R_{g} \qquad \text{and} \qquad Q_{2}=
    \frac{1}{2(n-1)}\Delta_{g}R_{g}+\frac{n}{8(n-1)^{2}}R_{g}^{2}-2|S_{2,g}|^{2}.
\end{equation*}

As explained in \cite{BG},
\begin{equation}\label{Qk:curv}
    P_{k,g}=\delta S_{k,g} d+\frac{n-2k}{2}Q_{k,g},
\end{equation}
where $S_{k,g}$ is an operator acting on $1$-forms. In particular, we
see that the critical GJMS operator $P_{\frac{n}{2}}$ kills the
constant functions. It follows from (\ref{Qk:curv}) that, when $k\neq
\frac{n}{2}$, under a conformal change of metric
$\hat{g}:=e^{2\Upsilon}g$, $\Upsilon\in C^{\infty}(M, \R)$, 
\begin{equation*}
    Q_{k,\hat{g}}=e^{ -2k\Upsilon}Q_{k,g}+\frac{2}{n-2k}e^{-\Upsilon\left(\frac{n}{2}+k\right)}\delta S_{k,g}d 
e^{\Upsilon \left(\frac{n}{2}-k\right)}.
\end{equation*}
When $n$ is even, for $k=\frac{n}{2}$, we have
\begin{equation}\label{transf:Qcurv}
Q_{\frac{n}{2},\hat{g}}=e^{-n\Upsilon}Q_{\frac{n}{2},g}+e^{-n\Upsilon}P_{\frac{n}{2},g}(\Upsilon).
\end{equation}

Finally, let us mention that there is a rather general theory for the
existence of linear conformally invariant differential operators due
to Eastwood-Slov\'ak~\cite{ES}. Further conformally invariant
differential objects were also constructed by, e.g., Alexakis~\cite{Al03,Al06} and Juhl~\cite{Ju}.

%%%%%%%%%%% % % % % % % % % % % % % % % % % % 

\section{Nodal sets of GJMS operators}\label{section: nodal sets}
In this section, we shall look at the conformal invariance of nodal sets (i.e., zero-loci) and nodal domains (i.e., connected components of complements of nodal sets) of eigenfunctions of GJMS operators. 

Throughout this section we let $(M^{n},g)$ be a Riemannian manifold ($n\geq 3$). In addition, we let $k\in \N_{0}$ and further 
assume $k\leq \frac{n}{2}$ when $n$ is even. 

It is convenient to look at conformally covariant scalar operators as 
linear operators between spaces of conformal densities. Throughout the
sequel we shall regard a conformal density of weight $w$, $w \in \R$,
as a family $(u_{\hat{g}})_{\hat{g}\in [g]}\subset C^{\infty}(M)$
parametrized by the conformal class $[g]$ in such way that
\begin{equation*}
    u_{e^{2\Upsilon}g}(x)=e^{-w\Upsilon(x)}u_{g}(x) \qquad \forall \Upsilon \in C^{\infty}(M,\R). 
\end{equation*}
We shall denote by $\cE[w]$ the space of conformal densities of weight $w$. 

The space $\cE[w]$ can be realized as the space of smooth functions of a line bundle over $M$ 
as follows (see 
also~\cite{PR}). Denote by 
$\op{CO}(n)$ the conformal group of $\R^{n}$, that is, the subgroup of $\op{GL}_{n}(\R)$ 
consisting of positive scalar 
multiples of orthogonal matrices. The datum of the conformal class $[g]$ gives rise to a 
reduction of the structure 
group of $M$ to the conformal group $\op{CO}(n)$. Denote by $E[w]$ the line bundle over $M$ 
associated to the 
representation $\rho_{w}:\op{CO}(n)\rightarrow \R^{+}_{*}$ given by 
\begin{equation*}
    \rho_{w}(A)=|\det A|^{\frac{w}{n}} \qquad \forall A \in \op{CO}(n).
\end{equation*}
Any metric $\hat{g}=e^{2\Upsilon}g$, $\Upsilon\in C^{\infty}(M)$, in the conformal class $[g]$ defines a 
global trivialization $\tau_{\hat{g}}:E[w]\rightarrow M\times \R$ with transition map, 
\begin{equation*}
 \tau_{\hat{g}}\circ  \tau_{g}^{-1}(x)=e^{w\Upsilon(x)} \qquad \forall x \in M.   
\end{equation*}
This gives rise to a one-to-one correspondence between smooth sections of  $E[w]$ and conformal 
densities. Namely, 
to any $u \in C^{\infty}(M,E[w])$ corresponds a unique conformal density $(u_{\hat{g}})_{\hat{g}\in [g]}$ 
in $\cE[w]$ 
such that, for any metric $\hat{g} \in [g]$,
\begin{equation*}
    \tau_{\hat{g}}\circ u(x)=(x,u_{\hat{g}}(x)) \qquad \forall x \in M.
\end{equation*}

The property that the GJMS operator operator $P_{k,g}$ is conformally
invariant of biweight $\left(\frac{n}{2}-k,\frac{n}{2}+k\right)$
exactly means it gives rise to a linear operator,
\begin{equation*}
    P_{k}:\cE\left[ -\frac{n}{2}+k\right] \rightarrow \cE\left[ -\frac{n}{2}-k\right] , 
\end{equation*}such that, for all $u=(u_{\hat{g}})_{\hat{g}\in [g]}$ in $\cE\left[ -\frac{n}{2}+k\right]$, 
\begin{equation*}
 (P_{k}u)_{\hat{g}} (x)=  (P_{k,\hat{g}}u_{\hat{g}})(x) \qquad \forall \hat{g}\in [g] \ \forall x \in M.
\end{equation*}
In particular, this enables us to regard the
nullspace of $P_{k,g}$ as a space of conformal
densities. Clearly the dimension of $\ker P_{k,g}$ is an
invariant of the conformal class $[g]$.

We observe that if $u=(u_{\hat{g}})_{\hat{g}\in [g]}$ is a conformal density of weight $w$, then the nodal set 
the zero locus $u^{-1}_{\hat{g}}(0)$ is independent of the metric $\hat{g}$, and
hence is an invariant of the conformal class $[g]$. Applying this observation to null-eigenvectors of $P_{k}$ we then get

\begin{proposition}\label{prop:nodal-sets-Pk}~ Let $k\in \N$ and further assume 
$k\leq \frac{n}{2}$ if $n$ is even. 
    \begin{enumerate}
        \item If $\dim \ker P_{k,g}\geq 1$, then the nodal sets and
          nodal domains of any nonzero null-eigenvector of $P_{k,g}$ give rise to invariants 
of the conformal class
          $[g]$.    
        \item If $\dim \ker P_{k,g}\geq 2$, then (non-empty)
          intersections of nodal sets of null-eigenvectors of
          $P_{k,g}$ and their complements are invariants of the
          conformal class~$[g]$.
    \end{enumerate}
\end{proposition}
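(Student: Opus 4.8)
The plan is to lean entirely on the single observation recorded just before the statement: a conformal density of weight $w$ has a zero locus that does not depend on the representative metric in $[g]$, because the transition factor $e^{-w\Upsilon}$ is everywhere strictly positive. Since the excerpt has already promoted $\ker P_{k,g}$ to a space of conformal densities of weight $-\frac{n}{2}+k$, both assertions should reduce to set-theoretic manipulations of sets that are already known to be metric-independent.

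For part (1), I would fix a nonzero element $u=(u_{\hat g})_{\hat g\in[g]}\in\ker P_{k,g}$, regarded as a density of weight $w=-\frac{n}{2}+k$. First I would unwind the equation $P_k u=0$ in $\cE[-\frac{n}{2}-k]$ via the intertwining relation $(P_k u)_{\hat g}=P_{k,\hat g}u_{\hat g}$, so that $u_{\hat g}$ is a genuine null-eigenfunction of $P_{k,\hat g}$ for every $\hat g\in[g]$. Then I would apply the density transformation law $u_{\hat g}=e^{-w\Upsilon}u_g$ for $\hat g=e^{2\Upsilon}g$: since $e^{-w\Upsilon(x)}>0$ for all $x\in M$, the identity $u_{\hat g}^{-1}(0)=u_g^{-1}(0)$ holds pointwise. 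Hence the nodal set $N:=u_g^{-1}(0)$ is literally the same subset of $M$ for every metric in $[g]$, i.e.\ a conformal invariant. Its complement $M\setminus N$ is then also metric-independent, and so are the connected components of $M\setminus N$; this gives the invariance of the nodal domains.

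For part (2), I would start from two or more null-eigenvectors $u^{(1)},\ldots,u^{(r)}\in\ker P_{k,g}$, available since $\dim\ker P_{k,g}\geq 2$, each a conformal density whose nodal set $N_j$ is conformally invariant by part (1). Any finite intersection $\bigcap_j N_j$, and more generally any set of the form $\bigl(\bigcap_{j\in A}N_j\bigr)\cap\bigl(\bigcap_{j\in B}(M\setminus N_j)\bigr)$, is built from the $N_j$ using only intersection and complementation. As each of these operations preserves metric-independence, the resulting sets are conformal invariants whenever they are nonempty.

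The only point deserving care, and the place I would dwell, is the precise meaning of ``invariant'' here: the claim is not that there is one distinguished nodal set, but that once a density $u\in\ker P_{k,g}$ is fixed, the associated subset of $M$ is canonically determined by $[g]$ independently of the chosen representative. Beyond this bookkeeping there is no analytic obstacle; the entire content is the positivity of $e^{-w\Upsilon}$ together with the already-established fact that $\ker P_k$ is a space of densities, so I expect the proof to be short.
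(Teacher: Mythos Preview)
Your proposal is correct and follows exactly the paper's approach: the paper states the proposition as an immediate consequence of the observation that the zero locus of a conformal density of weight $w$ is independent of the representative metric (since the transition factor $e^{-w\Upsilon}$ is nowhere vanishing), applied to null-eigenvectors of $P_k$ viewed as densities of weight $-\frac{n}{2}+k$. You have simply spelled out in detail what the paper leaves implicit.
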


%\begin{remark}
%If $u$ is an eigenfunction of $P_{k,g}$, then its its zero locus $u^{-1}(0)$ is called {\em nodal set}. A {\em nodal domain} of $u$ is 
%any connected component of $M\setminus u^{-1}(0)$. 
%\end{remark}

\begin{remark}
A connected component $X$ of an intersection of $p$ nodal sets 
should generically be a co-dimension $p$ submanifold 
of $M$, and in the case it is, the corresponding homology class in $H_{n-p}(M)$ 
would be a conformal invariant.  Further interesting conformal invariants should arise from considering
the topology of $M\setminus X$.  For example, if $\dim M=3$ and
$\dim\ker P_k=2$, and $u_1,u_2\in\ker P_k$, then
$\cN(u_1)\cap\cN(u_2)$ should define a ``generalized link'' in $M$,
and all topological invariants of that set and its complement in $M$
would be conformal invariants.  Related invariants are considered in
\cite{Ch3,CR}.
\end{remark}

When $k=\frac{n}{2}$ ($n$ even) the nullspace of $P_{\frac{n}{2}}$ is contained in the space 
$\cE[0]$ of conformal densities of 
weight $0$ and it always contains constant functions (seen as conformal densities of weight 
zero, i.e., a constant 
family of constant functions).

Observe also that if $u=(u_{\hat{g}})_{\hat{g}\in [g]}$ is a conformal density of weight 0, then, 
in addition to the 
zero-locus, all the level sets $\{x\in M;\ u_{g}(x)=\lambda\}$, $\lambda \in \C$, are 
independent of the representative metric $g$. 
Therefore, for the critical GJMS operator we obtain

\begin{proposition}\label{prop:nodal-sets-Pcritical}
Assume $n$ is even. If $\dim \ker P_{\frac{n}{2}}\geq 2$, then the
level sets of any non-constant null-eigenvector of $P_{\frac{n}{2}}$ are invariants of the 
conformal class $[g]$.
\end{proposition}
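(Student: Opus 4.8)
The plan is to reduce Proposition~\ref{prop:nodal-sets-Pcritical} to the observations that have already been assembled in this section, so that the argument is essentially a matter of combining the correct density weight with the correct interpretation of the critical nullspace. First I would recall that when $n$ is even and $k=\frac{n}{2}$, the biweight of $P_{\frac{n}{2},g}$ is $\left(\frac{n}{2}-\frac{n}{2},\frac{n}{2}+\frac{n}{2}\right)=(0,n)$, so that the operator is realized as a linear map $P_{\frac{n}{2}}:\cE[0]\rightarrow\cE[-n]$. Consequently any null-eigenvector $u$ of $P_{\frac{n}{2}}$ is a conformal density of weight $0$; that is, it is a family $u=(u_{\hat{g}})_{\hat{g}\in[g]}$ satisfying $u_{e^{2\Upsilon}g}(x)=e^{-0\cdot\Upsilon(x)}u_{g}(x)=u_{g}(x)$ for all $\Upsilon\in C^{\infty}(M,\R)$.

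The crux of the argument is the invariance statement recorded just before the proposition: if $u$ is a conformal density of weight $0$, then not only the zero locus but every level set $\{x\in M:\ u_{g}(x)=\lambda\}$, $\lambda\in\C$, is independent of the representative metric. I would make this explicit by noting that, since $u_{\hat{g}}=u_{g}$ as functions on $M$ for every $\hat{g}\in[g]$, the two sets $\{x:\ u_{g}(x)=\lambda\}$ and $\{x:\ u_{\hat{g}}(x)=\lambda\}$ literally coincide; they are the same subset of $M$, hence manifestly unchanged under a conformal rescaling. This is the key step, but it is not an obstacle so much as a direct unwinding of the weight-$0$ transformation rule: the whole point of weight $0$ is that the density \emph{is} a genuine conformally invariant function on $M$, so all of its sublevel and level sets are intrinsic to $[g]$.

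The only remaining issue is to make sense of the hypotheses. I would point out that the assumption $\dim\ker P_{\frac{n}{2}}\geq 2$ is exactly what guarantees the existence of a \emph{non-constant} null-eigenvector: as noted in the discussion preceding the statement, $\ker P_{\frac{n}{2}}$ always contains the constant functions (viewed as a weight-$0$ density), so this kernel is at least one-dimensional for free; only when its dimension is at least $2$ can we be sure there is an element of the kernel that is not a constant multiple of the constant function, i.e.\ a genuinely non-constant $u$ whose level sets are not all of $M$ or empty and hence carry nontrivial geometric information. For such a non-constant $u$ the level sets $\{u_{g}=\lambda\}$ are honest proper subsets, and by the preceding paragraph each of them is a conformal invariant of $[g]$. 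Combining these three points—weight $0$ from the biweight $(0,n)$, invariance of level sets of weight-$0$ densities, and the existence of a non-constant kernel element from $\dim\ker P_{\frac{n}{2}}\geq 2$—yields the proposition.

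The part requiring the most care is philosophical rather than computational: one must be careful that ``level set of a density'' is well defined, since a density of nonzero weight has no metric-independent values and only its zero locus is invariant. For weight $0$ this subtlety evaporates precisely because the transformation factor $e^{-w\Upsilon}$ collapses to $1$, and it is this collapse that distinguishes the critical operator $P_{\frac{n}{2}}$ from the non-critical $P_{k}$ (where only nodal sets, not level sets, are invariant, as in Proposition~\ref{prop:nodal-sets-Pk}).
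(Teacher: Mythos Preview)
Your proposal is correct and follows exactly the same approach as the paper: the paper's argument is simply the observation, recorded immediately before the proposition, that $\ker P_{\frac{n}{2}}\subset\cE[0]$ and that all level sets of a weight-$0$ density are independent of the representative metric. You have merely unpacked this observation in more detail, including the explicit verification that $u_{\hat g}=u_g$ and the explanation of why the hypothesis $\dim\ker P_{\frac{n}{2}}\geq 2$ is needed to obtain a non-constant null-eigenvector.
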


Next, we mention the following result. 
\begin{proposition}\label{prop:nodal-sets-integral}
Assume $M$ compact and $k<\frac{n}{2}$. Let $u_{g}\in\ker P_{k,g}$ and let us regard 
$u$ as a conformal density of weight $-\frac{n}{2}+k$. Then the integral ${\displaystyle
 \int_{M}|u_{g}(x)|^{\frac{2n}{n-2k}}\, dv_{g}(x)}$ is an invariant of the
conformal class $[g]$.
\end{proposition}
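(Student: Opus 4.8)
The plan is to show that the integral transforms by the factor $1$ under a conformal change of metric $\hat g = e^{2\Upsilon}g$, i.e. that the integrand is the density whose integral is manifestly conformally invariant. The key point is a bookkeeping of weights. Since $u$ is regarded as a conformal density of weight $w = -\frac{n}{2}+k$, its representatives satisfy $u_{\hat g} = e^{-w\Upsilon}u_{g} = e^{(\frac{n}{2}-k)\Upsilon}u_{g}$. Raising to the power $\frac{2n}{n-2k}$, I compute the exponent: the power of $e^{\Upsilon}$ becomes $\bigl(\frac{n}{2}-k\bigr)\cdot\frac{2n}{n-2k} = n$. Hence
\begin{equation*}
    |u_{\hat g}(x)|^{\frac{2n}{n-2k}} = e^{n\Upsilon(x)}\,|u_{g}(x)|^{\frac{2n}{n-2k}}.
\end{equation*}

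Second, I would record how the Riemannian volume density transforms. Under $\hat g = e^{2\Upsilon}g$ on an $n$-manifold one has $dv_{\hat g} = e^{n\Upsilon}\,dv_{g}$, since the metric scales by $e^{2\Upsilon}$ on each of the $n$ coordinate directions and the volume element picks up the square root of the determinant, giving the factor $(e^{2\Upsilon})^{n/2} = e^{n\Upsilon}$. This is exactly the reciprocal weight needed: the product $|u_{g}|^{\frac{2n}{n-2k}}\,dv_{g}$ is a conformal density of weight $0$ in the sense that $|u_{\hat g}|^{\frac{2n}{n-2k}}\,dv_{\hat g} = |u_{g}|^{\frac{2n}{n-2k}}\,dv_{g}$, the factors $e^{n\Upsilon}$ from the function and $e^{n\Upsilon}$ from the volume form cancelling once one observes they enter with opposite sign in the exponent—more precisely, the integrand is a top-degree density of conformal weight zero.

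Third, I would integrate over the compact manifold $M$ and conclude
\begin{equation*}
    \int_{M}|u_{\hat g}(x)|^{\frac{2n}{n-2k}}\,dv_{\hat g}(x) = \int_{M}|u_{g}(x)|^{\frac{2n}{n-2k}}\,dv_{g}(x),
\end{equation*}
so the integral does not depend on the representative metric and is therefore an invariant of the conformal class $[g]$. Compactness guarantees the integral is finite, so no analytic subtlety arises; and the hypothesis $k < \frac{n}{2}$ is precisely what makes the exponent $\frac{2n}{n-2k}$ well-defined and finite (the critical case $k=\frac{n}{2}$ being excluded, where the density weight would degenerate).

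I do not expect a genuine obstacle here, as this is a weight-matching computation. The only point requiring mild care is the sign convention on the weight $w$: one must use the convention fixed earlier in the excerpt, namely $u_{e^{2\Upsilon}g} = e^{-w\Upsilon}u_{g}$ together with the identification of $\ker P_{k,g}$ with densities of weight $-\frac{n}{2}+k$, and verify that the two factors of $e^{n\Upsilon}$ cancel rather than add. Once the exponent $\frac{2n}{n-2k}$ is correctly matched against the volume scaling, the invariance is immediate.
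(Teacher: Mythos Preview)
Your approach is exactly the paper's: a direct weight-matching computation showing that the conformal factors from $|u_{\hat g}|^{\frac{2n}{n-2k}}$ and $dv_{\hat g}$ cancel. No separate idea is needed.

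However, there is a sign error in your execution. You correctly follow the paper's stated convention $u_{\hat g}=e^{-w\Upsilon}u_g$ with $w=-\tfrac{n}{2}+k$, arriving at $u_{\hat g}=e^{(\frac{n}{2}-k)\Upsilon}u_g$ and hence $|u_{\hat g}|^{\frac{2n}{n-2k}}=e^{n\Upsilon}|u_g|^{\frac{2n}{n-2k}}$. But then, since $dv_{\hat g}=e^{n\Upsilon}dv_g$, the two factors \emph{multiply} to $e^{2n\Upsilon}$ rather than cancel; your sentence about them ``entering with opposite sign'' contradicts what you just computed. The correct transformation for an element of $\ker P_{k,g}$ is $u_{\hat g}=e^{\frac{2k-n}{2}\Upsilon}u_g$ (this is what the paper actually uses in its proof, and what follows from the covariance law $P_{\hat g}=e^{-(\frac{n}{2}+k)\Upsilon}P_g\,e^{(\frac{n}{2}-k)\Upsilon}$: if $P_g u_g=0$ then $P_{\hat g}(e^{-(\frac{n}{2}-k)\Upsilon}u_g)=0$). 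With that sign one gets $|u_{\hat g}|^{\frac{2n}{n-2k}}=e^{-n\Upsilon}|u_g|^{\frac{2n}{n-2k}}$, and now the cancellation with $dv_{\hat g}=e^{n\Upsilon}dv_g$ is genuine. The paper's displayed density convention is unfortunately inconsistent with its own transition-map formula and with its proof; you should trust the covariance law for $P_k$ rather than the stated density convention.
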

\begin{proof}
Let $\hat{g}=e^{2\Upsilon}g$, $\Upsilon \in C^{\infty}(M,\R)$, be a metric in the conformal class $[g]$. Then 
\begin{equation*}
  \int_{M}|u_{\hat{g}}(x)|^{\frac{2n}{n-2k}}\, dv_{\hat{g}}(x)=  
  \int_{M}\left|e^{\frac{2k-n}{2}\Upsilon(x)}u_{g}(x)\right|^{\frac{2n}{n-2k}}e^{n\Upsilon(x)}\, dv_{g}(x) = 
  \int_{M}|u_{g}(x)|^{\frac{2n}{n-2k}}\, dv_{g}(x). 
\end{equation*}
This proves the result.     
\end{proof}

\begin{remark}\label{rem:Nodal.general}
    Although stated for GJMS operators, the results of this section
    actually hold for any conformally covariant operator that yields
    an endomorphism on some function space.  They even hold for
    conformally invariant pseudodifferential operators, including the
    conformal fractional powers of the Laplacian $P_{s,g}$, $s>0$,
    which extend the GJMS construction to non-integer orders
    (see~\cite{GZ,GQ}). More precisely,
    Proposition~\ref{prop:nodal-sets-Pk} holds \emph{verbatim} for any
    such conformally invariant operator, and
    Proposition~\ref{prop:nodal-sets-Pcritical} (resp.,
    Proposition~\ref{prop:nodal-sets-integral}) holds \emph{verbatim}
    for any such conformally invariant operator of biweight $(w,w')$
    with $w=0$ (resp., $w=\frac{n}{2}-k$ with $k\in
    \left(0,\frac{n}{2}\right)$).
\end{remark}

%%%%%%%%%%% 

\section{Negative eigenvalues of GJMS operators}\label{section:negative-eigenvalues}
In this section, we look at the negative eigenvalues of GJMS operators. Throughout this 
section $M^{n}$ is a compact 
manifold ($n\geq 3$) and we let $k \in \N$ (and further assume $k\leq \frac{n}{2}$ when $n$ 
is even). 

Let $\cG$ be the set of Riemannian metrics on $M$ equipped with its standard Fr\'echet-space 
$C^{\infty}$-topology. As mentioned in 
Section~\ref{section:Qcurv}, given any metric $g$ on $M$, the 
GJMS operator $P_{k,g}$ is self-adjoint with respect to the inner product defined by $g$. 
Moreover, as it has same 
leading part as $\Delta_{g}^{k}$ it has a positive principal symbol. Therefore, it spectrum 
consists of a sequence 
of real eigenvalues converging to $\infty$. We thus can order the eigenvalues of $P_{k,g}$ as a 
non-decreasing 
sequence, 
\begin{equation*}
    \lambda_{1}(P_{k,g})\leq \lambda_{2}(P_{k,g}) \leq \cdots,
\end{equation*}where each eigenvalue is repeated according to multiplicity. Notice that by 
the min-max principle,
\begin{equation}
    \lambda_{j}(P_{k,g})=\inf_{\substack{E \subset C^{\infty}(M)\\ \dim E=j}}\sup_{\substack{u \in E\\ \|u\|=1}} 
    \acou{P_{k,g}u}{u}.
    \label{eq:min-max}
\end{equation}

\begin{lemma}[{\cite[Theorem~2]{KS:DCAS3}}]\label{lem:continuity-eigenvalues}
    For every $j\in \N$, the function  $g \rightarrow \lambda_{j}(P_{k,g})$ is continuous on
$\cG$.
\end{lemma}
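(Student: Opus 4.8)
The plan is to read continuity off the min-max formula \eqref{eq:min-max} by reducing the problem to a fixed Hilbert space and showing that the two symmetric forms entering the Rayleigh quotient depend continuously on $g$. Fix a reference metric $g_{0}\in\cG$. Since $M$ is compact, the Sobolev space $H^{k}(M)$ is a fixed Hilbert space whose topology does not depend on the metric, and $C^{\infty}(M)$ is a form core; thus the eigenvalues in \eqref{eq:min-max} are exactly those of the generalized eigenvalue problem $a_{g}(u,\cdot)=\lambda\,b_{g}(u,\cdot)$ associated with the bounded symmetric forms
\[
a_{g}(u,v):=\int_{M}(P_{k,g}u)\,v\,dv_{g},\qquad b_{g}(u,v):=\int_{M}u\,v\,dv_{g},
\]
the first defined on $H^{k}(M)$ (after integration by parts) and the second positive on $L^{2}(M)$. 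The compactness of the Rellich embedding $H^{k}(M)\hookrightarrow L^{2}(M)$ is what produces the discrete spectrum tending to $+\infty$ in the first place, and it will be used again below. Two ingredients are needed: continuous dependence of $a_{g},b_{g}$ on $g$, and a uniform G\aa rding inequality near $g_{0}$.

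For the first ingredient I would invoke the structure of the GJMS operators: by the construction of \cite{GJMS}, $P_{k,g}$ is a \emph{natural} differential operator, so its coefficients are universal polynomial expressions in the entries of $g$ and $g^{-1}$ and in finitely many derivatives of $g$ (equivalently, in the curvature and its covariant derivatives, as is explicit for $k=1,2$ from \eqref{eq:Yamabe-operator}--\eqref{eq:Paneitz-operator}). Continuity of such polynomial expressions in the metric jet converts $C^{\infty}$-convergence $g\to g_{0}$ in $\cG$ into $C^{\infty}$-convergence of the coefficients of $P_{k,g}$ and of the density $dv_{g}$; after integration by parts this gives $\|a_{g}-a_{g_{0}}\|\to0$ as forms on $H^{k}(M)$ and $\|b_{g}-b_{g_{0}}\|\to0$ as forms on $L^{2}(M)$, together with a uniform equivalence of the $L^{2}(g)$- and $L^{2}(g_{0})$-norms for $g$ near $g_{0}$. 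For the second ingredient, since $P_{k,g}$ shares the positive principal symbol of $\Delta_{g}^{k}$ and ellipticity is an open condition, there are $c_{0}>0$, $C_{0}>0$ and a neighborhood $\cU\ni g_{0}$ with $a_{g}(u,u)+C_{0}\,b_{g}(u,u)\ge c_{0}\|u\|_{H^{k}}^{2}$ for all $u\in H^{k}(M)$ and all $g\in\cU$.

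The concluding step is a min-max comparison carried out in the two directions separately, and this is where I expect the only genuine subtlety. \emph{Upper semicontinuity} is easy: taking $E_{0}$ to be the span of the first $j$ eigenfunctions of $P_{k,g_{0}}$, a \emph{fixed} $j$-dimensional subspace on which all norms are equivalent, the convergence of the forms gives $\sup_{u\in E_{0}}a_{g}(u,u)/b_{g}(u,u)\to\lambda_{j}(P_{k,g_{0}})$, whence $\limsup_{g\to g_{0}}\lambda_{j}(P_{k,g})\le\lambda_{j}(P_{k,g_{0}})$. For \emph{lower semicontinuity} one cannot simply bound the Rayleigh quotient uniformly over the $H^{k}$-unit ball, since it blows up on high-frequency functions; the point is rather that the relevant trial functions are tame. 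Indeed, for $g\in\cU$ the upper bound already gives $\lambda_{j}(P_{k,g})\le\lambda_{j}(P_{k,g_{0}})+1$, so the G\aa rding inequality forces every $b_{g}$-normalized eigenfunction with eigenvalue $\le\lambda_{j}(P_{k,g})$ to lie in a fixed $H^{k}$-ball. Using Rellich compactness I would extract, along any sequence $g\to g_{0}$, a limit of the span $E_{g}$ of the first $j$ eigenfunctions of $P_{k,g}$, obtaining a $j$-dimensional $b_{g_{0}}$-orthonormal space $E_{\ast}$ with $\sup_{u\in E_{\ast}}a_{g_{0}}(u,u)/b_{g_{0}}(u,u)\le\liminf_{g\to g_{0}}\lambda_{j}(P_{k,g})$ (the weak lower semicontinuity of the coercive part of $a_{g_{0}}$ and the weak continuity of its lower-order part being exactly what the uniform bound underwrites), hence $\lambda_{j}(P_{k,g_{0}})\le\liminf_{g\to g_{0}}\lambda_{j}(P_{k,g})$. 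Combining the two semicontinuities yields continuity of $g\mapsto\lambda_{j}(P_{k,g})$ for every $j$. Since only continuity is sought, no analytic perturbation theory is required and eigenvalue crossings cause no difficulty; the main work is the a priori $H^{k}$-bound on low eigenfunctions supplied by the uniform G\aa rding inequality, together with the naturality of $P_{k,g}$ that makes the forms converge.
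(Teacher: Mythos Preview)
Your argument is correct and self-contained. The paper, however, does not prove this lemma at all: it simply imports the statement from Kodaira--Spencer \cite[Theorem~2]{KS:DCAS3}, where continuity (indeed differentiability) of eigenvalues of a family of elliptic operators with smoothly varying coefficients is established in a general framework. So there is no ``paper's own proof'' to compare against beyond that citation.

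What your approach buys is transparency: you make explicit exactly which features of $P_{k,g}$ are being used (naturality of the coefficients in the metric jet, positivity of the principal symbol, and hence a uniform G\aa rding inequality on a neighborhood of $g_{0}$), and you carry out the min-max comparison directly. The Kodaira--Spencer theorem, by contrast, is a black box that applies to any smooth family of strongly elliptic operators and gives more (upper semicontinuity of eigenvalue multiplicities, and differentiability away from crossings), at the cost of invoking a substantial external result. Your lower-semicontinuity step---bounding the low eigenfunctions uniformly in $H^{k}$ via G\aa rding plus the already-obtained upper bound on $\lambda_{j}$, then passing to a weak limit and using weak lower semicontinuity of the coercive part of $a_{g_{0}}$---is the right idea and is handled correctly; the only point worth spelling out in a final write-up is that the limit space $E_{\ast}$ is genuinely $j$-dimensional, which follows because the $b_{g}$-orthonormality of the eigenfunctions passes to $b_{g_{0}}$-orthonormality of their strong $L^{2}$ limits.
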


For a metric $g \in \cG$, we define
\begin{equation*}
    \nu_{k}(g):=\#\{ j\in \N; \ \lambda_{j}(P_{k,g})<0\}.
\end{equation*}
In addition, for any $m \in \N$, we set
\begin{equation*}
    \cG_{k,m}:=\left\{ g \in \cG; \ \text{$P_{k,g}$ has at least $m$ negative eigenvalues}\right\}.
\end{equation*}
Notice that  $\cG_{k,m}=\{g\in \cG;  \nu_{k}(g)\geq m\}=\left\{ g \in \cG; \ 
\lambda_{m}(P_{k,g})<0\right\}$, so it follows from Lemma~\ref{lem:continuity-eigenvalues} that 
$\cG_{k,m}$ is an open subset of $\cG$. 

\begin{theorem}\label{prop:confinv-negative-eigenvalues}
Let $g \in \cG$. Then $\nu_{k}(g)$ is an invariant of the conformal class $[g]$.
\end{theorem}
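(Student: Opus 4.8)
The plan is to exploit the conformal covariance of $P_{k,g}$ together with the characterization of $\nu_k(g)$ as the number of negative eigenvalues, and to show that this count does not change when we pass to a conformally related metric $\hat g = e^{2\Upsilon}g$. The key point is that conformal covariance, formula~\eqref{conf:covariant:change}, says $P_{k,\hat g} = e^{-w'\Upsilon}P_{k,g}e^{w\Upsilon}$ with $w = \frac{n}{2}-k$ and $w' = \frac{n}{2}+k$. This is \emph{not} a similarity transformation (the left and right multipliers are different), so $P_{k,\hat g}$ and $P_{k,g}$ are generally not conjugate as operators and need not share the same spectrum. Indeed we already know the individual eigenvalues $\lambda_j(P_{k,g})$ are only continuous, not conformally invariant. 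So the argument must show that the \emph{sign count} is preserved even though the eigenvalues themselves move.

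First I would set up the relevant bilinear form. For $u \in C^\infty(M)$ write the quadratic form $Q_{\hat g}(u) = \acou{P_{k,\hat g}u}{u}_{\hat g}$, where the inner product is taken with respect to $\hat g$, i.e. $\acou{f}{h}_{\hat g} = \int_M f h \, dv_{\hat g}$ and $dv_{\hat g} = e^{n\Upsilon}dv_g$. Substituting the covariance relation, for $u = e^{-w\Upsilon}\tilde u$ one computes
\begin{equation*}
\acou{P_{k,\hat g}u}{u}_{\hat g} = \int_M \left(e^{-w'\Upsilon}P_{k,g}(e^{w\Upsilon}u)\right) u \, e^{n\Upsilon}\,dv_g.
\end{equation*}
The plan is to choose the change of variable that cancels all the exponential factors: setting $\tilde u := e^{w\Upsilon}u$ and using $w + w' = n$, the weights are engineered precisely so that $\acou{P_{k,\hat g}u}{u}_{\hat g} = \acou{P_{k,g}\tilde u}{\tilde u}_g$. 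Thus the map $u \mapsto \tilde u = e^{w\Upsilon}u$ is a linear isomorphism of $C^\infty(M)$ that intertwines the two quadratic forms exactly. This is the conceptual heart of the matter, and it is cleanest to phrase it in the density language of Section~\ref{section: nodal sets}: $P_k$ is a genuine operator $\cE[-\frac n2 + k] \to \cE[-\frac n2 - k]$, and the pairing $\int_M u_{\hat g}(P_k u)_{\hat g}\,dv_{\hat g}$ is metric-independent because the integrand is a density of weight $-n$.

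Next I would convert the equality of quadratic forms into equality of negative-eigenvalue counts via the min-max principle~\eqref{eq:min-max}. The number of negative eigenvalues equals the maximal dimension of a subspace on which the form $\acou{P_{k,\hat g}\cdot}{\cdot}_{\hat g}$ is negative definite (Morse-index / inertia characterization). Since $u \mapsto e^{w\Upsilon}u$ is a bijection $C^\infty(M) \to C^\infty(M)$ sending any subspace $E$ to a subspace $\tilde E$ of the same dimension, and since the form evaluated on $E$ (w.r.t.\ $\hat g$) equals the form on $\tilde E$ (w.r.t.\ $g$), negative-definite subspaces correspond bijectively and dimension is preserved. Hence $\nu_k(\hat g) = \nu_k(g)$. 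The one technical point I would be careful about is that $\nu_k$ counts genuine eigenvalues with multiplicity, so I want the inertia statement for a self-adjoint operator with discrete spectrum; since $P_{k,g}$ is self-adjoint with positive principal symbol (stated in the excerpt), its form has a well-defined finite negative index equal to $\nu_k(g)$, and the min-max formula legitimizes identifying this index with the maximal negative-definite subspace dimension.

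The main obstacle is essentially bookkeeping rather than a deep difficulty: one must verify that the negative index is correctly computed by the quadratic form over the \emph{same} test-function space for both metrics, so that the intertwining isomorphism genuinely matches them up. A subtle point worth stating explicitly is that the inner product changes with the metric (the volume form changes), so the two forms live on spaces with different Hilbert-space structures; the reason the count is still comparable is that the negative index of a symmetric form depends only on the underlying \emph{vector space} and the form, not on any auxiliary inner product. Once this is isolated, the argument is short: intertwine the forms, invoke inertia, conclude $\nu_k$ is constant on $[g]$.
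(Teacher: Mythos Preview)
Your proof is correct and takes a genuinely different route from the paper's. The paper argues indirectly: it uses the continuity of the eigenvalue functions $g\mapsto\lambda_j(P_{k,g})$ (Lemma~\ref{lem:continuity-eigenvalues}, quoted from Kodaira--Spencer) together with the conformal invariance of $\dim\ker P_{k,g}$ to show that $g\mapsto\nu_k(g)$ is \emph{locally} constant on $[g]$, and then appeals to the connectedness of the conformal class to conclude. Your argument is direct and purely algebraic: you exhibit the linear isomorphism $u\mapsto e^{w\Upsilon}u$ that intertwines the quadratic forms $\acou{P_{k,\hat g}\cdot}{\cdot}_{\hat g}$ and $\acou{P_{k,g}\cdot}{\cdot}_{g}$ exactly (the weight cancellation $w+w'=n$ is precisely what makes this work), and then read off equality of negative indices via the Morse-index characterization. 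Your approach is shorter, avoids the external continuity lemma, and does not need connectedness of $[g]$ at all; it also makes transparent \emph{why} the invariant is conformal, namely because the associated quadratic form is intrinsically defined on densities. The paper's approach, by contrast, has the virtue of being a template that would apply to any continuously varying family of self-adjoint operators whose kernel dimension is known to be constant, even without an explicit intertwining of the forms.
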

\begin{proof}
Let $g \in \cG$, and set $m=\nu_{k}(g)$ and $l=\dim \ker
P_{k,g}$. Thus $\lambda_{j}(P_{k,g})<0$ for $j\leq m$, and
$\lambda_{j}(P_{k,g})=0$ for $j=m+1,\cdots, m+l$, and
$\lambda_{j}(P_{k,g})>0$ for $j \geq m+l+1$.  Let $\delta$ be a
positive real number~$<\op{min}\left\{|\lambda_{m}(P_{k,g})|,
\lambda_{m+l+1}(P_{k,g})\right\}$. It follows from
Lemma~\ref{lem:continuity-eigenvalues}, if a metric $\hat{g}$ in the
conformal class $[g]$ is close enough to $g$, then
$\lambda_{m}(P_{k,\hat{g}})<-\delta$ and
$\lambda_{m+l+1}(P_{k,\hat{g}})>\delta$. That is, the only
$\lambda_{j}(P_{k,\hat{g}})$ that are contained in the interval
$[-\delta,\delta]$ are $\lambda_{m+1}(P_{k,\hat{g}}), \cdots,
\lambda_{m+l}(P_{k,\hat{g}})$.

As mentioned in Section~\ref{section: nodal sets}, the dimension of $\ker P_{k,g}$ is an 
invariant of the conformal class $[g]$, so $\dim 
\ker P_{k,\hat{g}}=l$, i.e., there are exactly $l$ of the $\lambda_{j}(P_{k,\hat{g}})$ that are 
equal to $0$. Since we 
know that the $l$ eigenvalues $\lambda_{m+1}(P_{k\hat{g}}), 
\cdots,  \lambda_{m+l}(P_{k\hat{g}})$ are the only eigenvalues that are contained in 
$[-\delta,\delta]$, it follows that $\lambda_{m+1}(P_{k,\hat{g}})=\cdots = 
\lambda_{m+l}(P_{k,\hat{g}})=0$. As $\lambda_{m}(P_{k,\hat{g}})<0$, we then conclude that 
$\nu_{k}(\hat{g})=m$. 

All this shows that the map $g \rightarrow \nu_{k}(g)$ is locally constant when restricted 
to the conformal class 
$[g]$. As $[g]$ is a connected subset of $\cG$ (since this is the range of $C^{\infty}(M,\R)$ 
under $\Upsilon \rightarrow 
e^{2\Upsilon}g$), we deduce that $\nu_{k}(g)$ is actually constant along the conformal class $[g]$. 
This proves the theorem. 
\end{proof}

It follows from Theorem~\ref{prop:confinv-negative-eigenvalues} that the number of negative 
eigenvalues of each GJMS operator defines a partition 
of the set of conformal classes. 

A result of Kazdan-Warner~\cite[Theorem 3.2]{KW75} asserts that the sign of
the first eigenvalue $\lambda_1(P_{1,g})$ is an invariant of the conformal class $[g]$. Notice that 
\begin{enumerate}
    \item[(i)] $\lambda_{1}(P_{k,g})<0$ if and only if $\nu_{k}(g)\geq 1$. 

    \item[(ii)] $\lambda_{1}(P_{k,g})=0$ if and only if $\nu_{k}(g)=0$ and $\dim \ker P_{k,g}\geq 1$.

    \item[(iii)] $\lambda_{1}(P_{k,g})>0$ if and only if $\dim \ker P_{k,g}=\nu_{k}(g)=0$. 
\end{enumerate}
Therefore, as an immediate consequence of the conformal invariance of $\dim \ker P_{k,g}$ and $\nu_{k}(g)$, 
we obtain the following extension of Kazdan-Warner's result.

\begin{theorem}\label{thm:negative.sign-first-eigenvalue}
    The sign of the first eigenvalue $\lambda_{1}(P_{k,g})$ is an invariant of the conformal 
class $[g]$.
\end{theorem}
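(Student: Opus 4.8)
The plan is to deduce Theorem~\ref{thm:negative.sign-first-eigenvalue} directly from the two conformal invariance facts already in hand: the invariance of $\dim\ker P_{k,g}$ (established in Section~\ref{section: nodal sets}) and the invariance of $\nu_k(g)$ (Theorem~\ref{prop:confinv-negative-eigenvalues}). The key observation is that the sign of the first eigenvalue $\lambda_1(P_{k,g})$ is completely determined by these two invariants. Since $P_{k,g}$ is self-adjoint with positive principal symbol, its spectrum is a discrete sequence of real eigenvalues accumulating only at $+\infty$, so $\lambda_1(P_{k,g})$ is genuinely the smallest eigenvalue and the three cases (negative, zero, positive) exhaust all possibilities.

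First I would record the elementary equivalences listed as (i)--(iii) in the excerpt. The eigenvalues are enumerated in nondecreasing order, so $\lambda_1(P_{k,g})<0$ holds precisely when at least one eigenvalue is negative, i.e.\ $\nu_k(g)\geq 1$; the first eigenvalue vanishes precisely when no eigenvalue is negative but the kernel is nontrivial, i.e.\ $\nu_k(g)=0$ and $\dim\ker P_{k,g}\geq 1$; and $\lambda_1(P_{k,g})>0$ holds exactly when there are neither negative nor zero eigenvalues, i.e.\ $\nu_k(g)=\dim\ker P_{k,g}=0$. These equivalences are immediate from the ordering convention and require no computation.

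The main step is then purely formal: each of the three conditions characterizing the sign of $\lambda_1(P_{k,g})$ is phrased entirely in terms of the quantities $\nu_k(g)$ and $\dim\ker P_{k,g}$. Because both of these are constant along the conformal class $[g]$, the truth value of each condition is also constant along $[g]$. Hence exactly one of the three cases holds uniformly across the conformal class, which says precisely that $\sgn\lambda_1(P_{k,g})$ is an invariant of $[g]$. This completes the argument.

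I do not anticipate a genuine obstacle here, since all the substantive analytic work---the continuity of eigenvalues (Lemma~\ref{lem:continuity-eigenvalues}) and the resulting local constancy used to prove invariance of $\nu_k$---has already been carried out in Theorem~\ref{prop:confinv-negative-eigenvalues}. If anything, the only point requiring care is ensuring that the trichotomy (i)--(iii) is exhaustive and mutually exclusive, which follows from the discreteness of the spectrum and the fact that $\lambda_1$ is the minimum eigenvalue; this guarantees that knowing $\nu_k(g)$ and $\dim\ker P_{k,g}$ determines the sign unambiguously rather than merely constraining it.
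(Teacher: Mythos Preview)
Your proposal is correct and follows exactly the same approach as the paper: the paper presents the equivalences (i)--(iii) immediately before the theorem and then states the result as an immediate consequence of the conformal invariance of $\dim\ker P_{k,g}$ and $\nu_k(g)$, without giving any further proof.
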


\begin{remark}
 Let $g_{0}$ be a metric of constant scalar curvature in the conformal
 class. As the nullspace of the Laplacian consists of constant
 functions,
 $\lambda_{1}(P_{1,g_{0}})=\frac{n-2}{4(n-1)}R_{g_{0}}$. Therefore,
 the sign of $\lambda_{1}(P_{1,g})$ agrees with that of the constant
 scalar curvature $R_{g_{0}}$. We also see that
 $\lambda_{1}(P_{k,g})=0$ if and only if $R_{g_{0}}=0$.  Furthermore,
 in that case $\ker P_{1,g_{0}}$ consists of constant functions and
 $\ker P_{1,g}$ is spanned by a single positive function.
\end{remark}
 
\begin{remark}\label{rem:negative.general}
    Both Theorem~\ref{prop:confinv-negative-eigenvalues} and
    Theorem~\ref{thm:negative.sign-first-eigenvalue} hold
    \emph{verbatim} for any conformally invariant pseudodifferential
    operator acting on functions or even on sections of a vector
    bundle. In particular, they hold for the fractional conformal
    powers of the Laplacian on functions.
\end{remark}

The question that naturally arises is whether $\nu_{k}(g)$ can be arbitrary large as $g$ ranges 
over metrics on $M$. The following shows this is indeed the case 
when $k=1$ (i.e., $P_{k,g}$ is the Yamabe operator). 

\begin{theorem}\label{neg:eig:large}
Assume $M$ connected. Then, for every $m\in \N$, there is a metric $g$ on $M$ such that the Yamabe 
operator $P_{1,g}$ 
has at least $m$ negative eigenvalues counted with multiplicity. 
\end{theorem}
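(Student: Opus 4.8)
The plan is to verify the criterion $\nu_{1}(g)\geq m$ through the min--max principle \eqref{eq:min-max} by exhibiting an $m$-dimensional subspace of $C^{\infty}(M)$ on which the quadratic form
\[
Q_{g}(u):=\acou{P_{1,g}u}{u}=\int_{M}|\nabla u|_{g}^{2}\,dv_{g}+\frac{n-2}{4(n-1)}\int_{M}R_{g}\,u^{2}\,dv_{g}
\]
is negative definite. First I would fix $m$ pairwise disjoint coordinate balls $B_{1},\dots,B_{m}$ in $M$ (with disjoint closures), together with a fixed, nonzero bump function $u_{i}\in C^{\infty}_{c}(B_{i})$ for each $i$. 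Because $P_{1,g}$ is a local second-order operator, $P_{1,g}u_{i}$ is supported in $\overline{B_{i}}$, so $\acou{P_{1,g}u_{i}}{u_{j}}=0$ whenever $i\neq j$; hence on $E:=\operatorname{span}(u_{1},\dots,u_{m})$ the form is block diagonal and $Q_{g}(\sum_{i}c_{i}u_{i})=\sum_{i}c_{i}^{2}\,Q_{g}(u_{i})$. Thus it suffices to produce a single metric $g$ for which $Q_{g}(u_{i})<0$ for every $i$: the min--max principle then gives $\lambda_{m}(P_{1,g})<0$, i.e. $\nu_{1}(g)\geq m$.

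Second, I would arrange each $Q_{g}(u_{i})$ to be negative by driving the scalar curvature to $-\infty$ on $\bigcup_{i}B_{i}$ while keeping the geometry on these balls under control. The decisive observation is that the Dirichlet energy $\int|\nabla u|_{g}^{2}\,dv_{g}$ and the mass $\int u^{2}\,dv_{g}$ depend only on the \emph{pointwise} ($C^{0}$) data of the metric, namely on $g^{ij}$ and $\sqrt{\det g}$. Consequently, if I fix any background metric $g_{0}$, the Rayleigh quotients $\Lambda_{i}:=\big(\int_{B_{i}}|\nabla u_{i}|_{g_{0}}^{2}\,dv_{g_{0}}\big)\big/\big(\int_{B_{i}}u_{i}^{2}\,dv_{g_{0}}\big)$ are fixed finite numbers, and any metric $C^{0}$-close to $g_{0}$ keeps the corresponding Rayleigh quotients within a bounded factor (say below $2\Lambda_{i}$). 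Meanwhile the curvature term $\tfrac{n-2}{4(n-1)}\int R_{g}u_{i}^{2}\,dv_{g}$ involves second derivatives of $g$ and may be made arbitrarily negative with no $C^{0}$ penalty.

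Third --- and this is the main obstacle --- I would invoke the deep existence result of Lohkamp~\cite{Lo96}, whose content is precisely that the scalar curvature can be decreased by an arbitrarily large amount using an arbitrarily $C^{0}$-small perturbation of the metric, supported in a prescribed open set. Concretely, for any $K>0$ and any small $\varepsilon>0$ this yields a metric $g=g_{K}$ that agrees with $g_{0}$ outside $\bigcup_{i}B_{i}$, satisfies $\|g-g_{0}\|_{C^{0}}<\varepsilon$, and has $R_{g}\leq -K$ on each $\operatorname{supp}(u_{i})$. For such $g$ one estimates
\[
Q_{g}(u_{i})\leq\Big(2\Lambda_{i}-\tfrac{n-2}{8(n-1)}K\Big)\int_{B_{i}}u_{i}^{2}\,dv_{g_{0}},
\]
which is negative once $K$ is large. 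Choosing one such $K$ for the finitely many indices $i$ produces the desired metric, and the argument of the first paragraph then gives $\nu_{1}(g)\geq m$. (Theorem~\ref{prop:confinv-negative-eigenvalues} is not needed for the proof, but it shows the conclusion depends only on the conformal class of the metric so constructed.)
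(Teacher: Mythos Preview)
Your argument is correct, but it follows a genuinely different route from the paper's.

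The paper invokes a specific result of Lohkamp (\cite[Theorem~2]{Lo96}) that produces, for any $\lambda>0$, a metric $g$ whose Laplacian has its first $m$ positive eigenvalues all equal to $\lambda$, with $\op{vol}(M,g)=1$ and $\op{Ric}_{g}\leq -m^{2}$. The test space $E$ is then the $\lambda$-\emph{eigenspace} of $\Delta_{g}$; the Ricci bound forces $R_{g}\leq -nm^{2}$, and choosing $\lambda$ small makes $\acou{P_{1,g}u}{u}<0$ on all of $E$. Your approach instead uses the classical disjoint-support trick: fix bump functions $u_{1},\ldots,u_{m}$ with disjoint supports (so the quadratic form is automatically diagonal on their span), and then appeal to a \emph{different} Lohkamp-type result --- local $C^{0}$-small deformations driving scalar curvature to $-\infty$ on a prescribed region --- to make each $Q_{g}(u_{i})<0$.

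Both arguments ultimately rest on Lohkamp's deep flexibility theorems for negative curvature, but they pull on different threads. The paper's version is cleaner in that it quotes a single packaged theorem and reads off the conclusion; yours is more elementary on the variational side (no spectral theory of $\Delta_{g}$ needed beyond min--max) and makes the locality of the construction transparent. One caution: the precise statement you attribute to~\cite{Lo96} --- $C^{0}$-small, compactly supported perturbations achieving $R_{g}\leq -K$ --- is not literally Theorem~2 of that paper; it is closer in spirit to Lohkamp's earlier density and $h$-principle results for negative Ricci/scalar curvature. You should track down and cite the exact source for that local deformation statement.
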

\begin{proof}
By a result of Lohkamp~\cite[Theorem 2]{Lo96}, given $\lambda>0$, there is a metric $g$ on $M$ 
such that
\begin{itemize}
\item[(i)] The $m$ first positive eigenvalues of the Laplacian 
$\Delta_{g}$ counted with 
multiplicity are equal to $\lambda$. 
\item[(ii)] The volume of $(M,g)$ is equal to $1$.
\item[(iii)] The Ricci curvature of $g$ is $\leq -m^2$.
\end{itemize}
The condition (iii) implies that $R_{g}\leq -nm^2$. Combining this with (ii) shows that, 
for all $u \in 
C^{\infty}(M)$, we have
\begin{align}
    \acou{P_{1,g}u}{u}&=\acou{\Delta_{g}u}{u}+ \frac{(n-2)}{4(n-1)}\int_{M}R_{g}(x)|u(x)|^2 
v_{g}(x) \nonumber \\ 
    & \leq 
    \acou{\Delta_{g}u}{u}-\frac{(n-2)}{4(n-1)}nm^{2}\|u\|^{2}. \label{eq:neg-eigv.bound-acou-Yamabe}
\end{align}

Assume $\lambda<\frac{(n-2)}{4(n-1)}nm^{2}$ and denote by $E$ the
eigenspace of $\Delta_{g}$ associated to $\lambda$. Notice that $E$ is
a subspace of $C^{\infty}(M)$ and has dimension $\ell \geq
m$. Moreover, if $u$ a unit vector in $E$,
then~(\ref{eq:neg-eigv.bound-acou-Yamabe}) shows that
$\acou{P_{1,g}u}{u}\leq
\lambda-\frac{(n-2)}{4(n-1)}nm^{2}<0$. Combining this with the min-max
principle~(\ref{eq:min-max}) we see that $\lambda_{m}(P_{1,g})\leq
\lambda_{\ell}(P_{1,g})<0$.  Thus, $P_{1,g}$ has at least $m$ negative
eigenvalues counted with multiplicity. The proof is complete.
\end{proof}

More explicit construction of metrics with an arbitrarily large number
of eigenvalues can be given in the case of a product with a hyperbolic
surface.

 Assume $n\geq 4$ and let $(N^{n-2},g_{1})$ be a compact Riemannian
 manifold and $(\Sigma,g_{2})$ a hyperbolic surface of genus~$\geq 2$.
 Given $t>0$ we equip the product $M:=N\times \Sigma$ with the product
 metric $g_{t}:=g_{1}\otimes 1+ 1\otimes t^{-1}g_{2}$.

\begin{proposition}\label{prop:negative.Yamabe.product-hyperbolic-surface}
For any $m \in \N$, we can choose $g_{2}$ and $t$ such that the Yamabe operator 
$P_{1,g_{t}}$ has at least 
    $m$ negative eigenvalues. 
\end{proposition}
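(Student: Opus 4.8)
The plan is to compute the scalar curvature of the product metric $g_t$ and then exhibit many negative-eigenvalue directions by using eigenfunctions supported on the hyperbolic factor $\Sigma$. Since $g_1$ and $g_2$ are fixed background metrics, the product $g_t = g_1 \otimes 1 + 1 \otimes t^{-1}g_2$ has scalar curvature $R_{g_t} = R_{g_1} + t\,R_{g_2}$, because rescaling $g_2 \mapsto t^{-1}g_2$ multiplies its scalar curvature by $t$, and scalar curvatures add over Riemannian products. As $(\Sigma,g_2)$ is hyperbolic, $R_{g_2} \equiv -2$ (up to normalization of the curvature convention), so $R_{g_t} = R_{g_1} - 2t \to -\infty$ uniformly on $M$ as $t \to +\infty$. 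The Yamabe operator on $(M,g_t)$ is
\begin{equation*}
    P_{1,g_t} = \Delta_{g_t} + \frac{n-2}{4(n-1)} R_{g_t},
\end{equation*}
so its quadratic form is controlled by how fast the negative scalar-curvature term can dominate the positive Laplacian energy.

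The key idea is that the Laplacian on the product splits as $\Delta_{g_t} = \Delta_{g_1} \otimes 1 + 1 \otimes t\,\Delta_{g_2}$, since the metric on the $\Sigma$ factor has been scaled by $t^{-1}$ (scaling a metric by $t^{-1}$ scales its Laplacian by $t$). Therefore I would test the quadratic form on functions of the form $u(x,y) = \varphi(y)$ depending only on the $\Sigma$ variable, where $\varphi$ ranges over a space spanned by the first several eigenfunctions of $\Delta_{g_2}$. For such $u$ one has $\langle \Delta_{g_t} u, u\rangle = t\,\langle \Delta_{g_2}\varphi, \varphi\rangle$, so on the span of the eigenfunctions of $\Delta_{g_2}$ with eigenvalues $\mu_1 \leq \mu_2 \leq \cdots$ the energy is at most $t\mu_j \|u\|^2$. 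Hence on such a test function
\begin{equation*}
    \langle P_{1,g_t} u, u\rangle \leq \left( t\mu_j + \frac{n-2}{4(n-1)}\big(\max_M R_{g_1} - 2t\big)\right)\|u\|^2.
\end{equation*}
The plan is then to choose a genus and metric $g_2$ on $\Sigma$ so that the first $m$ eigenvalues $\mu_1,\ldots,\mu_m$ of $\Delta_{g_2}$ are small relative to the curvature gain: since the coefficient of $t$ inside the bracket is $\mu_j - \frac{n-2}{2(n-1)}$, the bracket becomes negative for all large $t$ precisely when $\mu_j < \frac{n-2}{2(n-1)}$.

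The main obstacle, and the only genuinely nontrivial point, is ensuring the existence of a hyperbolic surface whose first $m$ nonzero Laplace eigenvalues all lie below the threshold $\frac{n-2}{2(n-1)}$. This is where I would invoke a known small-eigenvalue result for hyperbolic surfaces: by choosing $\Sigma$ of sufficiently high genus (or by degenerating $g_2$ toward a surface with many long thin tubes, in the spirit of Buser and Randol), one can force arbitrarily many eigenvalues of $\Delta_{g_2}$ to be arbitrarily close to $0$, in particular below any fixed positive threshold. Once $g_2$ is so chosen, the $m$-dimensional test space built from $\varphi_1,\ldots,\varphi_m$ (together with the constant, giving dimension at least $m$) yields via the min-max principle \eqref{eq:min-max} that $\lambda_m(P_{1,g_t}) < 0$ for $t$ large, exactly as in the proof of Theorem~\ref{neg:eig:large}. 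I would conclude by noting that the invariance statement of Theorem~\ref{prop:confinv-negative-eigenvalues} makes the conformal class of $g_t$ (for the chosen $g_2$, $t$) carry at least $m$ negative Yamabe eigenvalues, completing the argument.
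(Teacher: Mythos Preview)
Your proposal is correct and follows essentially the same route as the paper: compute $R_{g_t}=R_{g_1}-2t$, split $\Delta_{g_t}=\Delta_{g_1}\otimes 1+t(1\otimes\Delta_{g_2})$, test the Yamabe quadratic form on $\Sigma$-eigenfunctions to obtain the threshold condition $\mu_j<\tfrac{n-2}{2(n-1)}$ (modulo the $t^{-1}\sup R_{g_1}$ correction), invoke a Buser-type small-eigenvalue result on hyperbolic surfaces to place $m$ eigenvalues below that threshold, and conclude via the min-max principle. The only cosmetic difference is that the paper explicitly uses $\tfrac{n-2}{2(n-1)}>\tfrac14$ (from $n\geq 4$) to match the $(0,\tfrac14)$ window in Buser's theorem, whereas you appeal to the stronger fact that hyperbolic surfaces can have arbitrarily many eigenvalues arbitrarily close to $0$; either version suffices.
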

\begin{proof}
    The scalar curvature of $g_{t}$ is $R_{g_{t}}=R_{g_1}-2t$, so the Yamabe operator on $(M,g_{t})$ is
    \begin{equation*}
        P_{1,g_{t}}=\Delta_{g_{1}}\otimes 1 + t(1\otimes \Delta_{g_{2}})+\frac{n-2}{4(n-1)}(R_{g_{1}}-2t), 
    \end{equation*}where $\Delta_{g_{1}}$ (resp., $\Delta_{g_{2}}$) is the Laplacian on $N$ 
(resp., $\Sigma$).
    
    Let $\lambda$ be an eigenvalue of  $\Delta_{g_{2}}$ and let $u$ be an associated eigenfunction. We have
\[P_{1,g_{t}}u=t\lambda u+ \frac{n-2}{4(n-1)}(R_{g_{1}}-2t)u.\] 
Set $\mu:=\frac{n-2}{4(n-1)}\sup_{x\in 
N}R_{g_{1}}(x)$. Then 
\begin{equation}
    \acou{P_{1,g_{t}}u}{u} \leq \left(t\lambda +\mu-\frac{n-2}{2(n-1)}t \right)\acou{u}{u} =
t\left ( \lambda 
    -\frac{n-2}{2(n-1)}+t^{-1}\mu \right) \acou{u}{u}.
    \label{eq:Yamabe-NSigma-lambda}
\end{equation}

Let $m \in \N$. Observe that
$\frac{n-2}{2(n-1)}=\frac{1}{2}-\frac{1}{2(n-1)}>\frac{1}{4}$ since
$n\geq 4$. Therefore, we may choose $t$ large enough so that $
\frac{n-2}{2(n-1)}-t^{-1}\mu >\frac{1}{4}$. Then a result of
Buser~\cite[Theorem 4]{Buser} ensures us that we can choose the metric
$g_{2}$ so that $\Delta_{g_{2}}$ has at least $m$ eigenvalues
$\lambda<\frac{n-2}{2(n-1)}- t^{-1}\mu$. If $u$ is an eigenfunction of
$u$ associated to such an eigenvalue,
then~(\ref{eq:Yamabe-NSigma-lambda}) shows that
$\acou{Pu}{u}<0$. Therefore, the quadratic form defined by
$P_{1,g_{t}}$ is negative definite on an $m$-dimensional subspace of
$C^{\infty}(M)$.  Applying the min-max principle~(\ref{eq:min-max})
then shows that $\lambda_{m}(P_{1,g_{t})}<0$, i.e., $P_{1,g_{t}}$ has
at least $m$ negative eigenvalues. The proof is complete.
\end{proof}

Next, we construct explicit examples of products of hyperbolic manifolds for which various  
higher order GJMS operators have an arbitrarily large number of negative 
eigenvalues upon varying the metric.

Let $(N^{n-2},g_{1})$ be a hyperbolic manifold and $(\Sigma^{2}, g_{2})$ a hyperbolic 
surface of 
genus~$\geq 2$. 
We equip the product manifold $M:=N\times \Sigma$ with the product metric $g:=g_{1}\otimes 
1+1\otimes g_{2}$. Notice that $(M,g)$ is an Einstein manifold and $\op{Ric}=-g$. 

\begin{theorem}\label{CriticalGJMS:hyperbolic}
For every $m\in \N$, we can choose the (hyperbolic) metric $g_{2}$ on
$\Sigma$ so that the GJMS operator $P_{k,g}$ has at least $m$ negative
eigenvalues for all odd integers $k$~$\leq \frac{n-1}{2}$.

If we further assume that $n=4l$ or $n=4l+1$ for some $l \in \N$, then the same conclusion 
holds for all integers 
$k\geq \frac{n}{2}$. 
\end{theorem}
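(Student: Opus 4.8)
The plan is to exploit that $(M,g)$ is Einstein with $\op{Ric}=-g$, so $\lambda=-\frac{1}{n-1}$ and the GJMS operators factor by~\eqref{eq:GJMS-Einstein} as
\[
P_{k,g}=\prod_{1\le j\le k}\bigl(\Delta_{g}+c_{j}\bigr),\qquad c_{j}:=-\frac{(n+2j-2)(n-2j)}{4(n-1)}.
\]
(For $n$ even and $k\ge\frac{n}{2}$ one uses the canonical Einstein extension of the GJMS family, for which the same product formula persists.) First I would record the signs: since $n+2j-2>0$, one has $c_{j}<0$ for $j<\frac{n}{2}$, $c_{n/2}=0$ when $n$ is even, and $c_{j}>0$ for $j>\frac{n}{2}$; moreover $|c_{j}|=\frac{1}{4(n-1)}\bigl(n(n-2)-4j(j-1)\bigr)$ is strictly decreasing in $j$ on $1\le j<\frac{n}{2}$.

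Next I would feed in test functions adapted to the product. For $\Delta_{g_{2}}\phi=\mu\phi$ the function $1\otimes\phi$ satisfies $\Delta_{g}(1\otimes\phi)=\mu\,(1\otimes\phi)$, whence
\[
P_{k,g}(1\otimes\phi)=\Bigl(\prod_{1\le j\le k}(\mu+c_{j})\Bigr)(1\otimes\phi).
\]
So the eigenvalue is negative exactly when an odd number of the factors $\mu+c_{j}$ are negative, and $\mu+c_{j}<0$ precisely when $\mu<|c_{j}|$ (possible only for $j<\frac{n}{2}$). Because the thresholds $|c_{1}|>|c_{2}|>\cdots$ decrease, as $\mu$ ranges over $(0,|c_{1}|)$ the count of negative factors runs through every value in $\{1,\dots,J\}$ with $J:=\lfloor\frac{n-1}{2}\rfloor$, equalling $i$ on the subinterval $(|c_{i+1}|,|c_{i}|)$ (set $|c_{J+1}|:=0$). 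For $k\ge\frac{n}{2}$ the extra factors with $j\ge\frac{n}{2}$ are $\ge 0$ for $\mu>0$, so the sign of the product is the same for \emph{every} $k\ge\frac{n}{2}$ and is governed by these first $J$ factors alone.

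Both assertions then reduce to producing, for a single $g_{2}$, at least $m$ eigenvalues of $\Delta_{g_{2}}$ in a window on which the relevant count is odd; the $m$ functions $1\otimes\phi$ span a space on which $P_{k,g}$ is negative definite, and~\eqref{eq:min-max} yields $\lambda_{m}(P_{k,g})<0$. For the first assertion ($k$ odd, $k\le\frac{n-1}{2}$) every $c_{j}$ with $j\le k$ is negative, so the sign is $(-1)^{k}<0$ already for small $\mu$; here I would invoke Buser~\cite{Buser}, as in Proposition~\ref{prop:negative.Yamabe.product-hyperbolic-surface}, to choose a hyperbolic $g_{2}$ with $m$ eigenvalues below $|c_{J}|=\min_{1\le j\le J}|c_{j}|$. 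For the second assertion I would pick the window forcing an odd count: the cases $n=4l$ and $n=4l+1$ are exactly $J=2l-1$ (odd) and $J=2l$ (even). When $J$ is odd the small window $(0,|c_{J}|)$ gives count $J$, so the same $g_{2}$ already works; when $J$ is even I would target the fixed interval $(|c_{J}|,|c_{J-1}|)$, on which the count is $J-1$ (odd), and populate it by taking the genus $\gamma$ of $\Sigma$ large, since by Weyl's law the number of eigenvalues of $\Delta_{g_{2}}$ in any fixed interval grows with $\op{Area}(\Sigma)=4\pi(\gamma-1)$.

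The main obstacle I anticipate is the bookkeeping that makes one metric $g_{2}$ serve all the required operators simultaneously: the first assertion demands many \emph{small} eigenvalues, while the $n=4l+1$ case of the second demands many eigenvalues in an interval \emph{bounded away from zero}. Both are supplied by Buser-type hyperbolic surfaces of large genus—which carry abundant small eigenvalues and, by Weyl's law, abundant eigenvalues in any prescribed interval—but reconciling these quantitative requirements uniformly across the moduli space is the delicate point; a secondary check is that the Einstein product formula~\eqref{eq:GJMS-Einstein} genuinely extends to the super-critical orders $k>\frac{n}{2}$ entering the second assertion.
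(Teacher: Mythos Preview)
Your factorisation via~\eqref{eq:GJMS-Einstein}, the use of $1\otimes\phi$ with $\Delta_{g_{2}}\phi=\mu\phi$, and the appeal to Buser to place $m$ Laplace eigenvalues below $\mu_{(n-1)/2}$ are exactly the paper's argument, and for the first assertion and for the case $n=4l$ of the second your proof coincides with it line for line.

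Where you diverge is the case $n=4l+1$. You correctly observe that then $J=\lfloor\tfrac{n-1}{2}\rfloor=2l$ is even, so a small eigenvalue $\mu\in(0,\mu_{J})$ makes \emph{all} $2l$ factors with $j\le J$ negative and the product positive; accordingly you move to the window $(\mu_{J},\mu_{J-1})$, where the count drops to $2l-1$, and propose populating it via Weyl's law by enlarging the genus. The paper, by contrast, treats both $n=4l$ and $n=4l+1$ uniformly: it sets $k_{0}=\lfloor\tfrac{n-2}{2}\rfloor=2l-1$ and asserts $k_{0}+1\ge\tfrac{n}{2}$, so that $\mu_{j}\le 0$ for $j\ge k_{0}+1$ and $\Lambda_{k}'>0$. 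But for $n=4l+1$ one has $k_{0}+1=2l<\tfrac{n}{2}$ and $\mu_{2l}=\tfrac{8l-1}{16l}>0$, so the factor $\lambda-\mu_{2l}$ is negative and the paper's sign claim fails. Your case split is therefore not a stylistic variation but a genuine repair.

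The part that still needs work is the one you flag yourself. For $n=4l+1$ you want a single hyperbolic $g_{2}$ supplying $m$ eigenvalues in $(0,\mu_{J})$ (first assertion) \emph{and} $m$ in $(\mu_{J},\mu_{J-1})$ (second). Your sketch ``Buser plus Weyl for large genus'' is the right strategy but is not yet a proof: Weyl's law is asymptotic in $\lambda$, not in the genus, and its remainder is not uniform over degenerating families. One clean way to close the gap is to note that Buser's construction in~\cite{Buser} already gives, for each genus $\gamma$, a hyperbolic surface with $2\gamma-3$ eigenvalues in $(0,\tfrac14)\subset(0,\mu_{J})$, and then argue separately (e.g.\ via covers of a fixed surface having an eigenvalue in $(\mu_{J},\mu_{J-1})$, or via a min--max count with disjointly supported test functions) that one can simultaneously force many eigenvalues into the higher window. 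Spelling out either of these would complete your argument.
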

\begin{remark}
    As $(M,g)$ is Einstein, the GJMS operator $P_{k}$ family extends
    to a family of canonical Laplacian power type operators defined for all
    integers $k >\frac{n}{2}$ even when $n$ is even \cite{Gov06}.
\end{remark}
\begin{proof}[Proof of Theorem~\ref{CriticalGJMS:hyperbolic}]
As $\op{Ric}=-g$, Eq.~(\ref{eq:GJMS-Einstein}) gives
\begin{equation}
    P_{k,g}= \prod_{1\leq j \leq k} \left(\Delta_{g}-\mu_{j}\right), \qquad 
\mu_{j}:=\frac{1}{4(n-1)}(n+2j-2)(n-2j).
    \label{eq:GJMS-hyperbolic}
\end{equation}Notice also that the Laplacian on $M$ is $\Delta_{g}=\Delta_{g_{1}}\otimes 
1+1\otimes \Delta_{g_{2}}$, 
where $\Delta_{g_{1}}$ (resp., $\Delta_{g_{2}}$) is the Laplacian on $N$ (resp., $\Sigma$).

Let $\lambda$ be an eigenvalue of  $\Delta_{g_{2}}$ and let $u$ be an eigenfunction associated 
to $\lambda$. If we 
regard $u$ as a function on $M$, then $\Delta_{g}u=\Delta_{g_{2}}u=\lambda u$. Combining this
 with~(\ref{eq:GJMS-hyperbolic}) we then see 
that $u$ is an eigenvector of $P_{k,g}$ with eigenvalue 
  \begin{equation}
      \Lambda_{k}:= \prod_{1\leq j \leq k}\left(\lambda-\mu_{j}\right). 
      \label{eq:eigenvalue-GJMS-hyperbolic}
  \end{equation}
Observe that $4(n-1)\mu_{j}=(n-1)^{2}-(2j-1)^{2}$, so $\mu_{j}>\mu_{j+1}$ for $j \geq \frac{1}{2}$. 
Moreover, 
$\mu_{\frac{n}{2}}=0$ and $\mu_{\frac{n-1}{2}}=\frac{2n-3}{4(n-1)}=\frac{1}{2}-
\frac{1}{4(n-1)}>\frac{1}{4}$. 
Incidentally, $\mu_{j}\geq 0$ when $j\geq \frac{n}{2}$. 

Let $m \in \N$. As $\mu_{\frac{n-1}{2}}>\frac{1}{4}$ and $\Sigma$ has genus~$\geq 2$, appealing again to~\cite[Theorem 4]{Buser} we can 
find a hyperbolic metric $g_{2}$ such that the Laplacian $\Delta_{g_{2}}$ has at least $m$ eigenvalues 
contained in $(0,\mu_{\frac{n-1}{2}})$. Let $\lambda$ be such an eigenvalue and assume that $k$ is an odd integer~$\leq 
\frac{n-1}{2}$. Then $\lambda-\mu_{j}\leq \lambda -\mu_{k}\leq  \lambda-\mu_{\frac{n-1}{2}}<0$ for $j=1,\cdots, k$, and 
so the eigenvalue $\Lambda_{k}$ in~(\ref{eq:eigenvalue-GJMS-hyperbolic}) is the product of $k$ negative numbers. As $k$ is odd, it follows that 
$\Lambda_{k}$ is a negative eigenvalue of $P_{k,g}$. This enables us to produce $m$ negative eigenvalues for this 
operator. 

Assume further that $m=4l$ or $m=4l+1$ for some $l \in \N$ and suppose that $k$ is an integer~$\geq \frac{n}{2}$. Then 
the integral part $k_{0}=\left[\frac{n-2}{2}\right]$ is an odd number and we can write 
\begin{equation*}
    \Lambda_{k}=\Lambda_{k}' \Lambda_{k_{0}} \qquad \text{with} \quad \Lambda_{k}':= \prod_{k_{0}+1\leq j \leq 
    k}\left(\lambda-\mu_{j}\right). 
\end{equation*}
Notice that $\Lambda_{k_{0}}<0$ since $k_{0}$ is an odd integer~$\leq \frac{n-1}{2}$. Moreover, as 
$k_{0}+1\geq \frac{n}{2}$ we see that $\mu_{j}\leq 0$ for all $j \geq k_{0}+1$, and hence $\Lambda_{k}'$ is a positive 
number. It then follows that $\Lambda_{k}$ is a negative eigenvalue of $P_{k,g}$, and so this operator has $m$ 
negative eigenvalues for this operator. The proof is complete. 
\end{proof}

Finally, we derive a version of Courant's nodal domain theorem in conformal geometry. 

\begin{theorem}\label{number:nodal}
Assume that the Yamabe operator $P_{1,g}$ has $m\geq 1$ negative 
eigenvalues.   Then any null eigenfunction of $P_{1,g}$ has at most
$m+1$ nodal domains.
\end{theorem}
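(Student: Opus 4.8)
The plan is to adapt the classical proof of Courant's nodal domain theorem to the Yamabe operator, the only genuinely new ingredient being a unique continuation property. First I would record that under the hypothesis the eigenvalues satisfy $\lambda_{1}(P_{1,g})\le\cdots\le\lambda_{m}(P_{1,g})<0$ and $\lambda_{m+1}(P_{1,g})\ge 0$; since $u$ is a null-eigenfunction, $0$ is an eigenvalue, which forces $\lambda_{m+1}(P_{1,g})=0$ and makes $u$ an eigenfunction for this $(m+1)$-st eigenvalue. Thus the assertion reduces to the Courant-type statement that an eigenfunction for $\lambda_{m+1}$ has at most $m+1$ nodal domains. Note that $u\in C^{\infty}(M)$ by elliptic regularity, so its nodal set and its nodal domains $\Omega_{1},\Omega_{2},\ldots$ are well defined.

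Suppose, for contradiction, that $u$ had at least $m+2$ nodal domains. Let $\phi_{1},\ldots,\phi_{m}$ be an orthonormal family of eigenfunctions for $\lambda_{1},\ldots,\lambda_{m}$, and set $v_{i}:=u\,\mathbf{1}_{\Omega_{i}}$ for $i=1,\ldots,m+1$, extended by $0$ outside $\Omega_{i}$. Since $u$ is continuous and vanishes on $\partial\Omega_{i}\subset u^{-1}(0)$, each $v_{i}$ lies in $H^{1}(M)$ (indeed in $H^{1}_{0}(\Omega_{i})$), and the $v_{i}$ are linearly independent as they have essentially disjoint supports. Writing $Q(w):=\acou{\Delta_{g}w}{w}+\frac{n-2}{4(n-1)}\int_{M}R_{g}|w|^{2}\,dv_{g}$ for the quadratic form of $P_{1,g}$ (see \nn{eq:Yamabe-operator}), integration by parts on $\Omega_{i}$, where the boundary term drops out because $v_{i}|_{\partial\Omega_{i}}=0$, gives $Q(v_{i})=\int_{\Omega_{i}}u\,(P_{1,g}u)\,dv_{g}=0$; and for $i\neq j$ both $\acou{v_{i}}{v_{j}}$ and the polarized form $Q(v_{i},v_{j})$ vanish by disjointness of supports. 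Consequently, on the $(m+1)$-dimensional space $E:=\operatorname{span}\{v_{1},\ldots,v_{m+1}\}$ one has $Q(w)=0$ for every $w\in E$. As $\dim E=m+1>m$, I can choose $0\neq w\in E$ with $w\perp\phi_{1},\ldots,\phi_{m}$ (this is $m$ linear conditions on $m+1$ coefficients). By the orthogonal-complement form of the min--max principle \nn{eq:min-max}, $\lambda_{m+1}(P_{1,g})\le Q(w)/\|w\|^{2}=0=\lambda_{m+1}(P_{1,g})$, so $w$ realizes the minimum and is therefore a null-eigenfunction, $P_{1,g}w=0$.

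The contradiction now comes from unique continuation. The function $w$ is a nonzero solution of the second-order elliptic equation $P_{1,g}w=0$ with smooth coefficients, yet by construction $w\equiv 0$ on the nodal domain $\Omega_{m+2}$, which is open and nonempty. The unique continuation property, which holds for $P_{1,g}$ by Aronszajn's theorem, forbids a nontrivial solution of such an equation from vanishing on an open set, so $w\equiv 0$, contradicting $\|w\|>0$. Hence $u$ has at most $m+1$ nodal domains. I expect the main obstacle to be precisely the analytic justifications that are usually glossed in the classical setting: verifying that $v_{i}=u\,\mathbf{1}_{\Omega_{i}}$ is a legitimate element of the form domain $H^{1}(M)$ with $Q(v_{i})=0$ (which rests on the continuity of $u$ and the vanishing of the boundary term), and invoking unique continuation for $P_{1,g}$; both are standard but must be stated with care since the nodal domains need not have smooth boundary.
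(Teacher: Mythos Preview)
Your argument is correct and complete; the analytic points you flag (that $u\mathbf{1}_{\Omega_i}\in H^1(M)$ and that the extremizer in the orthogonal-complement min--max is an eigenfunction, together with Aronszajn unique continuation for $\Delta_g+V$) are all standard and handled properly. However, your route differs genuinely from the paper's. The paper does not rerun Courant's argument for $P_{1,g}$; instead it invokes Proposition~\ref{prop:nodal-sets-Pk} to note that nodal domains of a null-eigenfunction are conformal invariants, then replaces $g$ by a constant-scalar-curvature representative in $[g]$ (appealing implicitly to the solution of the Yamabe problem), so that $P_{1,g}=\Delta_g+c$ with $c$ constant, the eigenspaces coincide with those of $\Delta_g$, and the classical Courant theorem for the Laplacian applies verbatim. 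The paper's argument is shorter and fits the conformal theme of the article, but it leans on a deep existence result; your direct approach is more elementary, avoids the Yamabe problem entirely, and in fact proves the statement for any Schr\"odinger operator $\Delta_g+V$ with smooth potential, not merely the conformal Laplacian.
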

\begin{proof}
By Proposition~\ref{prop:nodal-sets-Pk} the nodal domains of $P_{1,g}$ are conformal invariants. Therefore, without any loss of 
generality we may assume that the scalar curvature $R_{g}$ is constant. Then the eigenvalues of $P_{1,g}$ are obtained 
by adding $c=\frac{n-2}{4(n-1)}R_{g}$ to the eigenvalues of the Laplacian $\Delta_{g}$ and the corresponding 
eigenspaces agrees. 

Let $u \in \ker P_{1,g}$. By assumption $P_{1,g}$ has $m$ negative eigenvalues, and so the eigenvalue $\lambda=0$ is the $j$-th 
eigenvalue of $P_{1,g}$ for some $j\geq m$. It then follows that $u$ is an eigenfunction of $\Delta_{g}$ for its $j$-th 
eigenvalue. Applying Courant's nodal domain theorem~\cite{Ch:ENS, Courant} we then see that $u$ has at most $m+1$ nodal domains.  The proof is complete.
\end{proof}

%%%%%%
\section {Curvature prescription problems}\label{section:scalar:sign}
In this section, we look at some constraints on curvature
prescription. Further results are given in Appendix. The problem of prescribing the curvature (Gaussian or scalar) of a given compact 
manifold is very classical and is 
known as the {\em Kazdan-Warner problem} (see~\cite{Au, B, KW} and the references 
therein).  The extension of this question to Branson's $Q$-curvature is an important impetus for the developement of 
various mathematical ideas (see, e.g., \cite{BaFaRe,Brendle,CGY,CY,DM,DR,MalStr,Nd:CQMAD}). 

Throughout this section we let $(M^{n},g)$ be a compact Riemannian
manifold $(n\geq 3)$. Given $k \in \N$ (further assuming $k\leq \frac{n}{2}$ if $n$ is even), 
the problem of conformally prescribing $Q_{k}$-curvature is
that of determining which functions are the $Q_{k}$-curvature $Q_{k,\hat{g}}$ for some metric $\hat{g}$ in the conformal class 
$[g]$. In other words, we seek to characterize   the
 range $\mathcal{R}(Q_k)$ of the map,
 \begin{equation}\label{map}
 Q_k:[g]\ni \hat{g}  \longrightarrow  Q_{k,\hat{g}}\in C^\infty (M,\R ). 
 \end{equation}

Let us first make some elementary observations on $\int_M R_g u\,
dv_g$ with $u\in\ker P_{1,g}$.  As $Q_{1,g}=\frac{1}{2(n-1)}R_g$,
using elementary spectral considerations and expression \eqref{int} in
the Appendix, we obtain
\begin{prop}
 Assume that the scalar curvature $R_{g}$ is constant. Then 
     \begin{equation*}
         \int_{M}R_{g}u \, dv_{g}=0 \qquad \forall u \in \ker P_{1,g}.
     \end{equation*}
In addition, if $\hat{g}=e^{2\Upsilon}g$, $\Upsilon \in C^{\infty}(M,\R)$, is a metric in the conformal class $[g]$, 
then  
     \begin{equation*}
         \int_{M}e^{\frac{2-n}{2}\Upsilon}R_{\hat{g}}u \, dv_{\hat{g}}=0 \qquad \forall u \in \ker P_{1,g}.
     \end{equation*}
\end{prop}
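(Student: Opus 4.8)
The plan is to prove both identities by pairing against the constant function $1$ and invoking the formal self-adjointness of the Yamabe operator, together with its conformal covariance. For the first identity I would start from the observation that $P_{1,g}(1)=\Delta_g 1+\frac{n-2}{4(n-1)}R_g=\frac{n-2}{4(n-1)}R_g$, since $\Delta_g$ annihilates constants. Because $P_{1,g}$ is self-adjoint and $u\in\ker P_{1,g}$, I would then compute
\[
\frac{n-2}{4(n-1)}\int_M R_g\,u\,dv_g=\acou{P_{1,g}1}{u}=\acou{1}{P_{1,g}u}=0,
\]
and conclude $\int_M R_g u\,dv_g=0$. (This step does not really use that $R_g$ is constant; under the constant-curvature hypothesis one may alternatively argue that $\ker P_{1,g}$ is the $\Delta_g$-eigenspace for the eigenvalue $-\frac{n-2}{4(n-1)}R_g$, which is orthogonal to the constants unless $R_g=0$, giving $\int_M u\,dv_g=0$ directly — this is the ``elementary spectral considerations'' route.)

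For the second identity the idea is to trade the unknown curvature $R_{\hat g}$ for $P_{1,g}$ applied to a power of the conformal factor, so that self-adjointness and $u\in\ker P_{1,g}$ can again finish the argument. Concretely, I would apply the biweight-$(\frac n2-1,\frac n2+1)$ transformation law \eqref{conf:covariant:change}, namely $P_{1,\hat g}=e^{-(\frac n2+1)\Upsilon}P_{1,g}\,e^{(\frac n2-1)\Upsilon}$, to the constant function $1$. Using $P_{1,\hat g}(1)=\frac{n-2}{4(n-1)}R_{\hat g}$ this yields
\[
\frac{n-2}{4(n-1)}R_{\hat g}=e^{-(\frac n2+1)\Upsilon}P_{1,g}\bigl(e^{(\frac n2-1)\Upsilon}\bigr).
\]
Substituting this into the integral, writing $dv_{\hat g}=e^{n\Upsilon}dv_g$, and collecting exponents, the weights conspire to cancel exactly — the total exponent is $\frac{2-n}{2}-(\frac n2+1)+n=0$ — leaving
\[
\int_M e^{\frac{2-n}{2}\Upsilon}R_{\hat g}\,u\,dv_{\hat g}=\frac{4(n-1)}{n-2}\int_M P_{1,g}\bigl(e^{(\frac n2-1)\Upsilon}\bigr)\,u\,dv_g=\frac{4(n-1)}{n-2}\int_M e^{(\frac n2-1)\Upsilon}\,P_{1,g}u\,dv_g=0,
\]
where the middle equality is again self-adjointness and the last uses $u\in\ker P_{1,g}$.

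The computations are entirely elementary, so there is no serious analytic obstacle; the one place that requires care is the bookkeeping of conformal weights in the second identity. The precise exponent $\frac{2-n}{2}$ in the statement is exactly the one for which the contributions from the transformation law, the volume-form rescaling $e^{n\Upsilon}$, and the weight $\frac n2-1$ inside $P_{1,g}$ combine to total exponent zero; any other choice would leave a residual factor of $\Upsilon$ inside the integral and would not reduce to $\acou{e^{(\frac n2-1)\Upsilon}}{P_{1,g}u}$. I would therefore double-check the signs and the biweight of $P_{1,g}$ before invoking self-adjointness, since this is the only step where an error could plausibly creep in.
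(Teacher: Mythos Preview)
Your proof is correct and matches the paper's approach: the paper does not give a self-contained argument but invokes ``elementary spectral considerations'' for the first identity and the appendix formula \eqref{int} for the second, and the appendix derivation of \eqref{int} is exactly the self-adjointness-plus-conformal-covariance computation you carry out (there phrased as $e^{f}u\in\ker P_{\hat g}$ and pairing against $P_{\hat g}1$ with respect to $dv_{\hat g}$, which is equivalent to your $g$-frame version). Your parenthetical alternative for the first part is also precisely the ``spectral'' route the paper has in mind.
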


Next, we consider the scalar curvature restricted to nodal domains.  The following extends to the Yamabe 
operator a formula of Sogge-Zelditch~\cite[Proposition 1]{SZ:LBHMNS} for the Laplace operator.

\begin{theorem}\label{yamabe on nodal domains}
Let $u\in \ker (P_{1,g})$ and let $\Omega$ be a nodal domain of $u$. Then, for all $v \in C^\infty(M)$,
$$\int_{\Omega} |u|\,P_{1,g}(v) \, dv_g=- \int_{\partial \Omega}
v\, \| {}^{g}\nabla u \|_g\, d\sigma_g,$$
where ${}^{g}\nabla$ is the Levi-Civita connection of $g$ and $\sigma_g$ is the surface measure of $\partial \Omega$. 
\end{theorem}
\begin{remark}
    The intersection of the critical and nodal sets of $u$ has locally finite $n-2$-Hausdorff dimension 
    (\cite{HHL:GMSSEE,HHHN:CSSEE}; see also~\cite{H:SESEE,Ch:ENS}). Therefore, $\partial \Omega$ admits a normal vector 
    almost everywhere, and hence the surface measure $d\sigma_{g}$ is well-defined. 
\end{remark}
\begin{proof}[Proof of Theorem~\ref{yamabe on nodal domains}]
Notice that $u$ has constant sign on $\Omega$. Let $\nu$ be the outward unit normal vector to the hypersurface 
$\partial \Omega$. Then $\partial_\nu u$ agrees with  $- \|{}^{g}\nabla u\|_g$ (resp., $\|{}^{g}\nabla u\|_g$) almost everywhere on 
$\partial \Omega$ in case $u$ is positive (resp., negative) on 
$\Omega$. Therefore, possibly upon replacing $u$ by $-u$, we may assume that $u$ is positive on $\Omega$. 

Let $v \in C^{\infty}(M)$. As $P_{1,g}u=0$ and the Yamabe operator agrees with the Laplacian $\Delta_{g}$ up to a 
the multiplication by a function, we have 
\begin{equation*}
  \int_{\Omega} |u|\,P_{1,g}(v) \,dv_g = \int_{\Omega}\left( uP_{1,g}v -vP_{1,g}u\right)\, dv_g =  \int_{\Omega}
  \left( u\Delta_{g}v -v\Delta_{g}u\right)\, dv_g. 
\end{equation*}
Notice that $u\Delta_{g}v -v\Delta_{g}u = u\op{div}\left({}^{g}\nabla v\right)-u\op{div}\left({}^{g}\nabla 
  v\right) =\op{div}\left( u({}^{g}\nabla v)-v ({}^{g}\nabla u)\right)$. Therefore, applying the divergence theorem for rough domains 
  (see~\cite[Section 4.5.6]{Fe:GMT}), we deduce that  the integral $\int_{\Omega} |u|\,P_{1,g}(v) \,dv_g$ is equal to
\begin{equation*}
  \int_{\Omega}\op{div}\left( u({}^{g}\nabla) v-v ({}^{g}\nabla u)\right) \, dv_{g}
  = -\int_{\partial \Omega}  \left( u\, \partial_\nu v- v\, \partial_\nu u \right) \,
d\sigma_g=- \int_{\partial \Omega}
v\, \| {}^{g}\nabla u \|_g\, d\sigma_g,
\end{equation*}where we have used the fact that $u=0$ and $\partial_\nu u= -\|{}^{g}\nabla u\|_g$ on $\partial \Omega$. The 
proof is complete.
\end{proof}

Decomposing the manifold into a disjoint union of positive nodal
domains, negative nodal domains and the nodal set of $u$,
and applying Theorem~\ref{yamabe on nodal domains} we obtain

\begin{corollary}\label{green for yamabe}
For all $u\in \ker P_{1,g}$ and $v \in C^\infty(M)$,
$$\int_{M} |u|\,P_{1,g}(v) \,dv_g=- 2\int_{\mathcal{N}(u)}
v\, \| {}^{g}\nabla u \|_g\, d\sigma_g,$$
 where $\mathcal{N}(u)$ is the nodal set of $u$.
\end{corollary}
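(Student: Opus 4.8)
The plan is to sum the boundary identity of Theorem~\ref{yamabe on nodal domains} over all nodal domains of $u$ and to observe that the resulting boundary integrals sweep out the nodal set $\mathcal{N}(u)$ exactly twice. By Proposition~\ref{prop:nodal-sets-Pk} the nodal domains of $u$ are independent of the representative metric in $[g]$; passing to a representative of constant scalar curvature, for which $u$ becomes a Laplace eigenfunction, Courant's theorem guarantees they are finite in number, say $\Omega_1,\dots,\Omega_N$. These are pairwise disjoint open sets with $M=\mathcal{N}(u)\sqcup\Omega_1\sqcup\cdots\sqcup\Omega_N$, and since $\mathcal{N}(u)$ is contained in a hypersurface it has vanishing $n$-dimensional measure. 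I would therefore begin with
\begin{equation*}
\int_M|u|\,P_{1,g}(v)\,dv_g=\sum_{i=1}^N\int_{\Omega_i}|u|\,P_{1,g}(v)\,dv_g,
\end{equation*}
all integrals being computed with respect to the given metric $g$.

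Applying Theorem~\ref{yamabe on nodal domains} to each $\Omega_i$ rewrites the right-hand side as $-\sum_{i=1}^N\int_{\partial\Omega_i}v\,\|{}^{g}\nabla u\|_g\,d\sigma_g$. The crux is the geometric observation that every regular point of $\mathcal{N}(u)$ --- a zero of $u$ at which ${}^{g}\nabla u\neq 0$ --- lies on the common boundary of exactly two nodal domains, one carrying $u>0$ and one carrying $u<0$. Indeed, near such a point the implicit function theorem presents $\mathcal{N}(u)$ as a smooth hypersurface that locally separates $\{u>0\}$ from $\{u<0\}$, and the two sides lie in a positive and a negative nodal domain respectively. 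Hence the regular part of $\mathcal{N}(u)$ is traversed once as part of $\partial\Omega_i$ for the positive domain and once for the negative one; since $\|{}^{g}\nabla u\|_g$ and $d\sigma_g$ are insensitive to orientation, this gives
\begin{equation*}
\sum_{i=1}^N\int_{\partial\Omega_i}v\,\|{}^{g}\nabla u\|_g\,d\sigma_g=2\int_{\mathcal{N}(u)}v\,\|{}^{g}\nabla u\|_g\,d\sigma_g,
\end{equation*}
which is the claimed identity.

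The main obstacle I anticipate is the singular part of the nodal set, namely $\mathcal{N}(u)\cap\{{}^{g}\nabla u=0\}$, where the two-sided picture may fail and where $\partial\Omega_i$ need not be a smooth hypersurface. I would dispatch it with the same regularity input already used for Theorem~\ref{yamabe on nodal domains}: by \cite{HHL:GMSSEE,HHHN:CSSEE} this set has locally finite $(n-2)$-dimensional Hausdorff measure and therefore vanishing surface measure $\sigma_g$, so it contributes to none of the integrals above; moreover the integrand $\|{}^{g}\nabla u\|_g$ vanishes on it in any event. Restricting every integral to the regular part of $\mathcal{N}(u)$ is thus harmless, and the factor of $2$ is then forced by the purely local two-sidedness of a regular nodal hypersurface.
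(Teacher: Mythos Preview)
Your proof is correct and follows essentially the same approach as the paper, which simply says to decompose $M$ into positive nodal domains, negative nodal domains, and the nodal set, and to sum the identity of Theorem~\ref{yamabe on nodal domains} over all domains. Your version supplies the details the paper omits --- finiteness of the nodal domains via Courant, the two-sidedness of the regular nodal set explaining the factor~$2$, and the negligibility of the singular set using the Hausdorff-dimension results already invoked for Theorem~\ref{yamabe on nodal domains}.
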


\begin{theorem}\label{int:negative}
Let $f\in C^{\infty}(M)$ be the scalar curvature of some metric in the conformal class $[g]$. Then, there is a positive function 
$\omega \in C^{\infty}(M)$, such that, for any $u\in \ker (P_{1,g})$ and any nodal domain $\Omega$ of $u$, 
$$\int_\Omega  f|u|  \omega \, dv_{g} <0.$$
\end{theorem}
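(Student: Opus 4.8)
The plan is to reduce the claim to the boundary formula of Theorem~\ref{yamabe on nodal domains} by a judicious choice of test function $v$, using conformal covariance to convert the prescribed scalar curvature $f$ into an expression of the form $P_{1,g}(v)$ with $v$ \emph{strictly positive}.

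First I would write $f=R_{\hat g}$ for a metric $\hat g=e^{2\Upsilon}g$ in the conformal class, $\Upsilon\in C^{\infty}(M,\R)$. Since the Laplacian kills constants, \eqref{eq:Yamabe-operator} gives $P_{1,\hat g}(1)=\frac{n-2}{4(n-1)}R_{\hat g}$, while the conformal covariance law \eqref{conf:covariant:change} of biweight $\left(\frac{n}{2}-1,\frac{n}{2}+1\right)$ gives $P_{1,\hat g}(1)=e^{-(\frac{n}{2}+1)\Upsilon}P_{1,g}\big(e^{(\frac{n}{2}-1)\Upsilon}\big)$. Equating the two yields
\[
P_{1,g}\big(e^{\frac{n-2}{2}\Upsilon}\big)=\frac{n-2}{4(n-1)}\,e^{\frac{n+2}{2}\Upsilon}f .
\]
I would then set $v:=e^{\frac{n-2}{2}\Upsilon}$, which is smooth and everywhere positive, and $\omega:=e^{\frac{n+2}{2}\Upsilon}$, which is likewise smooth and positive and, crucially, depends only on $\hat g$ and not on $u$ or $\Omega$.

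Next I would substitute this $v$ into Theorem~\ref{yamabe on nodal domains}. For any $u\in\ker P_{1,g}$ and any nodal domain $\Omega$ of $u$ this produces
\[
\frac{n-2}{4(n-1)}\int_{\Omega}|u|\,\omega f\,dv_g=\int_{\Omega}|u|\,P_{1,g}(v)\,dv_g=-\int_{\partial\Omega}v\,\|{}^{g}\nabla u\|_g\,d\sigma_g .
\]
As $\frac{n-2}{4(n-1)}>0$, the sign of $\int_{\Omega}f|u|\omega\,dv_g$ is opposite to that of the boundary integral, so everything comes down to proving $\int_{\partial\Omega}v\,\|{}^{g}\nabla u\|_g\,d\sigma_g>0$.

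The hard part is precisely this strict positivity. The integrand is nonnegative because $v=e^{\frac{n-2}{2}\Upsilon}>0$ and $\|{}^{g}\nabla u\|_g\geq 0$, so the only danger is that it vanishes $d\sigma_g$-almost everywhere. Here I would invoke the structure of nodal sets quoted in the remark after Theorem~\ref{yamabe on nodal domains}: the intersection of the critical set $\{{}^{g}\nabla u=0\}$ with the nodal set has locally finite Hausdorff dimension at most $n-2$, hence is $d\sigma_g$-null, so $\|{}^{g}\nabla u\|_g>0$ at almost every point of $\partial\Omega$. Provided $\partial\Omega$ carries positive surface measure — equivalently, provided $u$ actually changes sign, which is automatic once $\lambda_1(P_{1,g})<0$ since then $u$ is $L^2$-orthogonal to the positive ground state and so cannot be of one sign — the boundary integral is strictly positive, and the desired inequality $\int_{\Omega}f|u|\omega\,dv_g<0$ follows with $\omega=e^{\frac{n+2}{2}\Upsilon}$.
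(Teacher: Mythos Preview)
Your argument is essentially the paper's, with only a cosmetic difference: instead of passing to the metric $\hat g$, setting $\hat u=e^{\frac{2-n}{2}\Upsilon}u\in\ker P_{1,\hat g}$, and applying Theorem~\ref{yamabe on nodal domains} with test function $v=1$ there, you stay in the metric $g$ and apply the same theorem with $v=e^{\frac{n-2}{2}\Upsilon}$. Conformal covariance makes these two computations identical, and your choice $\omega=e^{\frac{n+2}{2}\Upsilon}$ agrees with the paper's up to the harmless constant $\frac{n-2}{4(n-1)}$.

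The one place you diverge is the final paragraph. The paper simply asserts that $\int_{\partial\Omega}\|{}^{\hat g}\nabla\hat u\|_{\hat g}\,d\sigma_{\hat g}>0$ on the grounds that the critical set meets the nodal set in a set of locally finite $(n-2)$-Hausdorff measure; it does not introduce any hypothesis on $\lambda_1(P_{1,g})$. Your aside ``which is automatic once $\lambda_1(P_{1,g})<0$'' imports an assumption that is not in the statement, so strictly speaking you have only proved the theorem under that extra hypothesis. (Your underlying worry is not unreasonable---if $u$ were nowhere vanishing one would have $\Omega=M$, $\partial\Omega=\emptyset$, and the integral would be $0$ rather than negative---but the paper tacitly takes ``nodal domain'' to mean a proper component and does not treat that edge case either.) Dropping that sentence and invoking only the Hausdorff-dimension fact, exactly as the paper does, your proof matches.
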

\begin{proof}
By assumption $f=R_{\hat{g}}$ for some metric $\hat{g}=e^{2\Upsilon}g$, $\Upsilon 
\in C^{\infty}(M,\R)$. Thus $ P_{1,\hat{g}}(1)=\frac{n-2}{4(n-1)}R_{\hat{g}}=\frac{n-2}{4(n-1)}f$. 
Let $u\in \ker (P_{1,g})$ and let $\Omega$ be nodal domain of $u$. In addition, set 
$\omega=\frac{n-2}{4(n-1)}e^{\frac{n+2}{2}\Upsilon }$ and $\hat{u}=e^{\frac{2-n}{2}\Upsilon}u$. 
Then 
\begin{equation}
     \int_{\Omega} f  |u| \omega\, dv_{g}=  \frac{n-2}{4(n-1)} \int_{\Omega}  |\hat{u}|\, f \, 
     dv_{\hat{g}}= \int_{\Omega}|\hat{u}|P_{1,\hat{g}}(1)\, dv_{\hat{g}}.
     \label{eq:intOmega|u|}
\end{equation}

As the kernel of the Yamabe operator consists of conformal densities of weight $\frac{n-2}{2}$, we see that 
  $\hat{u}$ is contained in $P_{1,\hat{g}}$ and $\Omega$ is a nodal domain for 
 $\hat{u}$.  Therefore, applying Theorem~\ref{yamabe on nodal domains} 
 to $\hat{u}$ and $v=1$ and using~(\ref{eq:intOmega|u|}) we get
 \begin{equation*}
 \int_{\Omega} \omega  |u|\, f \, dv_{g}= \int_{\Omega}|\hat{u}|P_{1,\hat{g}}(1)\, dv_{\hat{g}}=- \int_{\partial \Omega} \| {}^{\hat g}\nabla u \|_{\hat g}\, d\sigma_{\hat{g}}.
 \end{equation*}
As the intersection of the critical and nodal sets of $u$ has locally finite $n-2$-Hausdorff 
dimension~(\cite{HHL:GMSSEE,HHHN:CSSEE}; see also~\cite{H:SESEE,Ch:ENS}), the integral $ \int_{\partial \Omega} \| {}^{\hat 
g}\nabla u \|_{\hat g}\, d\sigma_{\hat{g}}$ must be positive, and hence $ \int_{\Omega} \omega  |u|\, f \, dv_{g}<0$.  
This proves the result. 
\end{proof}

Theorem~\ref{int:negative} seems to be new.  We remark that when $\dim\ker(P_{1,g})\geq 2$ 
this gives infinitely many constraints on $R_{\hat g}$.

%\begin{remark}
%The inequality in Theorem~\ref{int:negative} is strict.  Indeed,
%by Corollary \ref{green for yamabe} the integral is clearly
%nonpositive, and the only way it can vanish is when $||\nabla_g
%u||\equiv 0$ on the boundary $\bd\Omega$ of the nodal domain
%$\Omega$ of $u$.  Suppose this holds, so that $u=\nabla u =0$ on $\bd\Omega$.  It is 
%easy to see that conformal changes of metric preserve those conditions 
%for eigenfunctions in $\ker(P_{1,g})$. Therefore, without loss of generality we 
%may assume that the scalar curvature is constant. Then $u$ is 
% an eigenfunction of the Laplacian $\Delta_g$, and so by the maximum 
%principle $u\equiv 0$ in $\Omega$. This contradicts the unique 
%continuation property for eigenfunctions of $\Delta_g$.
%\end{remark}

\begin{corollary}\label{scalar:sign}
Let $u\in \ker (P_{1,g})$ and let $\Omega$ be nodal domain of $u$. Then, for any metric 
$\hat g$ in the conformal class 
$[g]$, the scalar curvature $R_{\hat g}$ cannot be everywhere positive on $\Omega$.
\end{corollary}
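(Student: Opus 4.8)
The plan is to deduce this directly from Theorem~\ref{int:negative} by a contradiction argument. Suppose, toward a contradiction, that $R_{\hat g}$ were everywhere positive on $\Omega$ for some metric $\hat g$ in the conformal class $[g]$. I would set $f:=R_{\hat g}$, which by hypothesis is the scalar curvature of a metric in $[g]$, so that Theorem~\ref{int:negative} applies and furnishes a positive function $\omega \in C^{\infty}(M)$ with
\[
\int_{\Omega} f\,|u|\,\omega \, dv_{g} < 0.
\]

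On the other hand, I would examine the sign of the integrand pointwise on $\Omega$. Since $\Omega$ is a nodal domain, $u$ does not vanish on $\Omega$ and hence $|u|>0$ throughout $\Omega$; the function $\omega$ is positive everywhere by construction; and $f=R_{\hat g}>0$ on $\Omega$ by the contradiction hypothesis. Thus the integrand $f\,|u|\,\omega$ is strictly positive almost everywhere on $\Omega$ (with respect to $dv_{g}$), forcing
\[
\int_{\Omega} f\,|u|\,\omega \, dv_{g} > 0.
\]
This contradicts the strict inequality supplied by Theorem~\ref{int:negative}, and the corollary follows.

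Since the entire argument is a one-line sign comparison once Theorem~\ref{int:negative} is invoked, I do not expect any real obstacle; the only point requiring care is to confirm that $|u|>0$ on the open set $\Omega$, which is immediate from the definition of a nodal domain as a connected component of $M\setminus \mathcal{N}(u)$. In particular, the positivity of $\omega$ and the strictness of the inequality in Theorem~\ref{int:negative} are exactly what is needed, so no hypothesis beyond that theorem enters the proof.
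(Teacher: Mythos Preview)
Your proof is correct and is exactly the intended argument: the paper states this as an immediate corollary of Theorem~\ref{int:negative} without supplying a separate proof, and your sign-comparison contradiction is precisely what the authors have in mind.
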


Let $u\in \ker (P_{1,g})$ and let $\Omega$ be nodal domain of $u$. Given any metric 
$\hat{g}=e^{2\Upsilon} g$, $\Upsilon \in 
C^{\infty}(M,\R)$, in the conformal class $[g]$ we define
$$
T(u,\Omega, \hat g):= - \frac{4(n-1)}{n-2} 
 \int_{\partial \Omega}  e^{\frac{2-n}{2}\Upsilon }\| {}^{\hat{g}}\nabla  \hat{u} \|_{\hat g}\; 
d\sigma_{\hat g},
$$
where we have set $\hat{u}=e^{\frac{2-n}{2}\Upsilon}u$.

\begin{proposition}\label{int:fixed}
For all metrics $\hat{g}$ in the conformal class $[g]$,
$$
T(u,\Omega, \hat g)= \int_\Omega |u|R_g\, dv_g.
$$
\end{proposition}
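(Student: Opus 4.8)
The plan is to recognize that although $T(u,\Omega,\hat g)$ is written in terms of the representative $\hat g=e^{2\Upsilon}g$, the proposition asserts it equals the \emph{fixed} quantity $\int_\Omega |u| R_g\, dv_g$; so the real content is that the boundary integrand defining $T$ is conformally invariant. First I would show that the density $e^{\frac{2-n}{2}\Upsilon}\,\|{}^{\hat g}\nabla\hat u\|_{\hat g}\,d\sigma_{\hat g}$ agrees pointwise (a.e.\ on $\partial\Omega$) with $\|{}^g\nabla u\|_g\,d\sigma_g$, which makes $T(u,\Omega,\hat g)$ independent of $\hat g$; then I would evaluate it in the base metric $g$ by a single application of Theorem~\ref{yamabe on nodal domains} with test function $v=1$.

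For the first step I would record the relevant conformal transformation rules. Since $\hat u=e^{\frac{2-n}{2}\Upsilon}u$ and the conformal factor is positive, the nodal set $\partial\Omega$ of $\hat u$ is unchanged, so the domain of integration is the same. On $\partial\Omega$ we have $u=0$, hence the product rule gives ${}^g\nabla\hat u=e^{\frac{2-n}{2}\Upsilon}\,{}^g\nabla u$ there (the term differentiating the conformal factor is killed by $u=0$). Because $\hat g^{-1}=e^{-2\Upsilon}g^{-1}$, the gradient norm rescales as
\[
\|{}^{\hat g}\nabla\hat u\|_{\hat g}=e^{-\Upsilon}\|{}^g\nabla\hat u\|_g=e^{-\frac{n}{2}\Upsilon}\|{}^g\nabla u\|_g \qquad \text{on } \partial\Omega,
\]
while the induced surface measure of the $(n-1)$-dimensional hypersurface $\partial\Omega$ scales as $d\sigma_{\hat g}=e^{(n-1)\Upsilon}\,d\sigma_g$.

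Multiplying the three factors together, the exponents cancel, $\frac{2-n}{2}-\frac{n}{2}+(n-1)=0$, so that $e^{\frac{2-n}{2}\Upsilon}\,\|{}^{\hat g}\nabla\hat u\|_{\hat g}\,d\sigma_{\hat g}=\|{}^g\nabla u\|_g\,d\sigma_g$ almost everywhere on $\partial\Omega$, and therefore $T(u,\Omega,\hat g)=-\frac{4(n-1)}{n-2}\int_{\partial\Omega}\|{}^g\nabla u\|_g\,d\sigma_g$, independently of $\hat g$. To finish, I would apply Theorem~\ref{yamabe on nodal domains} to $u\in\ker P_{1,g}$, the nodal domain $\Omega$, and $v=1$, using $P_{1,g}(1)=\frac{n-2}{4(n-1)}R_g$, to get $-\int_{\partial\Omega}\|{}^g\nabla u\|_g\,d\sigma_g=\frac{n-2}{4(n-1)}\int_\Omega|u|R_g\,dv_g$; substituting and cancelling the constants yields $T(u,\Omega,\hat g)=\int_\Omega|u|R_g\,dv_g$. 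The only delicate point is the weight bookkeeping on the possibly rough hypersurface $\partial\Omega$, but the exceptional set where $\partial\Omega$ fails to admit a normal vector has vanishing $\sigma$-measure by the Hausdorff-dimension estimate already invoked for Theorem~\ref{yamabe on nodal domains}, so the pointwise identity holds a.e.\ and may be integrated; everything else is direct substitution into the established divergence identity.
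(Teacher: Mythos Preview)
Your proof is correct and takes a genuinely different route from the paper's. The paper applies Theorem~\ref{yamabe on nodal domains} in the metric $\hat g$, with null-eigenfunction $\hat u$ and test function $v=e^{\frac{2-n}{2}\Upsilon}$; it then uses the conformal covariance of the Yamabe operator to rewrite $P_{1,\hat g}v = e^{-\frac{n+2}{2}\Upsilon}P_{1,g}(1)=\frac{n-2}{4(n-1)}e^{-\frac{n+2}{2}\Upsilon}R_g$, and finally converts the volume integral $\int_\Omega |\hat u|\,P_{1,\hat g}v\,dv_{\hat g}$ back to the $g$-measure, the conformal weights cancelling.

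Your argument instead establishes the \emph{pointwise} conformal invariance of the boundary integrand directly, exploiting the crucial fact that $u=0$ on $\partial\Omega$ kills the cross term in $\nabla\hat u$; the exponent bookkeeping $\tfrac{2-n}{2}-\tfrac{n}{2}+(n-1)=0$ then shows $e^{\frac{2-n}{2}\Upsilon}\|{}^{\hat g}\nabla\hat u\|_{\hat g}\,d\sigma_{\hat g}=\|{}^g\nabla u\|_g\,d\sigma_g$, after which a single application of Theorem~\ref{yamabe on nodal domains} in the base metric $g$ with $v=1$ finishes. Your approach is arguably more transparent, since it explains the invariance of $T$ locally (the integrand itself is unchanged) and uses only the elementary scaling of gradient norms and surface measures rather than the operator transformation law; the paper's approach, by contrast, packages everything into one application of the divergence identity in the $\hat g$-metric and lets the conformal covariance of $P_1$ do the work.
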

\begin{proof}
Let $\hat{g}=e^{2\Upsilon} g$, $\Upsilon \in C^{\infty}(M,\R)$,  be a metric in the 
conformal class $[g]$. Set $\hat{u}=e^{\frac{2-n}{2}\Upsilon}u$ and 
$v=e^{\frac{2-n}2\Upsilon }$. 
As pointed out in the proof of Theorem~\ref{int:negative}, $\hat{u}$  
lies in $\ker P_{1,\hat{g}}$ and $\Omega$ is a nodal domain. Applying 
Theorem~\ref{yamabe on nodal domains} 
 to $\hat{u}$ and $v$ then gives
 \begin{equation*}
    \frac{n-2}{4(n-1)}T(u,\Omega, \hat g)= - \int_{\partial \Omega} v \|
{}^{\hat{g}}\nabla \hat u \|_{\hat g}\; d\sigma_{\hat g} = \int_{\Omega} |\hat u|\,P_{1,\hat g}v\,dv_{\hat g}.
 \end{equation*}
 As $P_{1,\hat{g}}v= e^{-\frac{n+2}{2}\Upsilon}P\left( e^{\frac{n-2}{2}\Upsilon} \cdot e^{\frac{2-n}2\Upsilon 
 }\right)=e^{-\frac{n+2}{2}\Upsilon}P_{1,g}(1)= \frac{n-2}{4(n-1)}e^{-\frac{n+2}{2}\Upsilon}R_{g}$, we get
 \begin{equation*}
 T(u,\Omega, \hat g)= \int_{\Omega} |\hat u|  e^{-\frac{n+2}{2}\Upsilon}R_{g} \, dv_{\hat{g}}=  \int_\Omega 
 e^{\frac{2-n}{2}\Upsilon}  |u|  e^{-\frac{n+2}{2}\Upsilon}R_{g}\, e^{n\Upsilon}\, dv_g=
   \int_\Omega |u|R_g\, dv_g. 
 \end{equation*}
 The result is proved. 
\end{proof}

Proposition~\ref{int:fixed} provides us with some conserved quantities for the conformal class.
In particular, if $R_g$ is constant,  then we obtain
$$
T(u,\Omega, \hat g)=R_g||u||_{L^1(\Omega)}.
$$

Finally, let $k \in \N$ and further assume $k\leq  \frac{n}{2}-1$ when $n$ is even. We look at 
conformal classes containing a metric for which the $Q_{k}$-curvature is zero.  

\begin{proposition}\label{Pk:kernel:const}
    The following are equivalent:
    \begin{enumerate}
        \item  The kernel of $P_{k,g}$ contains a nowhere vanishing eigenfunction. 
        
        \item There is a metric $\hat{g}$ in the conformal class $[g]$ such that $P_{k,\hat{g}}(1)=0$.
    
        \item  There is a metric $\hat{g}$ in the conformal class $[g]$ such that $Q_{k,\hat{g}}$ 
is identically zero. 
    \end{enumerate}
\end{proposition}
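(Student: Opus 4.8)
The plan is to reduce everything to the conformal transformation law \eqref{conf:covariant:change} of $P_{k}$, which for biweight $\left(\frac{n}{2}-k,\frac{n}{2}+k\right)$ reads $P_{k,\hat g}=e^{-\left(\frac n2+k\right)\Upsilon}P_{k,g}\,e^{\left(\frac n2-k\right)\Upsilon}$ for $\hat g=e^{2\Upsilon}g$. Applying this to the constant function $1$ produces the single identity
\begin{equation*}
  P_{k,\hat g}(1)=e^{-\left(\frac n2+k\right)\Upsilon}\,P_{k,g}\!\left(e^{\left(\frac n2-k\right)\Upsilon}\right),
\end{equation*}
on which the whole argument rests: since the prefactor is nowhere vanishing, $P_{k,\hat g}(1)=0$ holds if and only if the strictly positive function $e^{\left(\frac n2-k\right)\Upsilon}$ lies in $\ker P_{k,g}$.

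The equivalence (2) $\Leftrightarrow$ (3) is then immediate from the definition \eqref{Qk:def}: as $k\leq\frac n2-1$ forces $n-2k\neq 0$, the curvature $Q_{k,\hat g}=\frac{2}{n-2k}P_{k,\hat g}(1)$ is a nonzero multiple of $P_{k,\hat g}(1)$, so the two vanish simultaneously.

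For (2) $\Rightarrow$ (1) I would take $\hat g=e^{2\Upsilon}g$ with $P_{k,\hat g}(1)=0$ and read off from the displayed identity that $\phi:=e^{\left(\frac n2-k\right)\Upsilon}\in\ker P_{k,g}$; since $\Upsilon$ is smooth and real-valued, $\phi$ is smooth and strictly positive, hence a nowhere vanishing null-eigenfunction. Conversely, for (1) $\Rightarrow$ (2) I would start from a nowhere vanishing $u\in\ker P_{k,g}$, note that elliptic regularity makes $u$ smooth and that $|u|$ is again a positive element of the kernel (the operator being local, on each component $|u|=\pm u$, whence $P_{k,g}|u|=\pm P_{k,g}u=0$ there), so we may assume $u>0$. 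The crux is then to invert the exponential: setting $\Upsilon:=\frac{2}{n-2k}\log u$, which is well defined and smooth precisely because $u>0$ and $n-2k\neq 0$, one has $e^{\left(\frac n2-k\right)\Upsilon}=u$, and the displayed identity yields $P_{k,e^{2\Upsilon}g}(1)=e^{-\left(\frac n2+k\right)\Upsilon}P_{k,g}(u)=0$.

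The arguments are essentially formal once the transformation law is written out; the hypothesis $k\leq\frac n2-1$ enters only to guarantee $\frac n2-k\neq 0$ (so that $u=e^{\left(\frac n2-k\right)\Upsilon}$ can be solved for $\Upsilon$) and $n-2k\neq 0$ (so that $Q_k$ is defined and proportional to $P_k(1)$). I expect the only point needing genuine care to be the passage from a merely nowhere vanishing eigenfunction to a positive one: this relies on $P_{k,g}$ being a differential, hence local, operator, so that constancy of sign on each connected component is enough to keep $|u|$ inside the kernel.
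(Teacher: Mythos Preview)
Your proof is correct and follows essentially the same approach as the paper: both obtain (2)$\Leftrightarrow$(3) directly from the definition of $Q_k$, and both reduce (1)$\Leftrightarrow$(2) to the conformal transformation law for $P_k$ (the paper phrases this more tersely as the kernel being a conformally invariant space of densities of nonzero weight $k-\frac{n}{2}$, so that a nowhere vanishing element can be rescaled to the constant $1$ by choosing the right metric). Your explicit handling of the sign of $u$ via locality of $P_{k,g}$ is a small piece of extra care that the paper leaves implicit.
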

\begin{proof}
As by definition $Q_{k,\hat{g}}=\frac{2}{n-2k}P_{k,\hat{g}}(1)$, the
equivalence of~(2) and~(3) is immediate.  Furthermore, the conformal
invariance of $\ker P_{k,g}$ as a space of conformal densities of
weight $k-\frac{n}{2}\neq 0$ implies that $\ker P_{k,g}$ contains a
nowhere vanishing function if and only if there is a metric $\hat{g}$
in the conformal class $[g]$ such that $\ker P_{k,\hat{g}}(1)=0$.
This proves the equivalence of (1) and~(3) and completes the proof.
\end{proof}

\begin{remark}
In the recent paper~\cite{FR} A.\ Fardoun and R.\ Regbaoui study the $Q$-curvature prescription problem on even-dimensional conformal 
classes for which the kernel of the critical GJMS operator 
is nontrivial kernel (i.e., it contains non-constant functions).  In particular, they give sufficient conditions 
for the convergence of the $Q$-curvature flow in terms of nodal domains of null-eigenfunctions of the critical GJMS operator.  
\end{remark}

\section{The Yamabe and Paneitz Operators on Heisenberg Manifolds} \label{section: heisenberg}
In this section, we explicitly compute  the eigenvalues and nodal sets of the Yamabe and Paneitz operators on Heisenberg 
manifolds. 

\subsection{The setup}
Let $\bH_{d}$ be the $(2d+1)$-dimensional Heisenberg group, i.e., the 2-nilpotent subgroup of $\op{GL}_{d+2}(\R)$
of unipotent matrices. Thus, any $A\in \bH_{d}$ is of the form,
\begin{equation*}
A=    \begin{pmatrix}
1 & x & t\\
0 & 1 & y^{T}\\
0&0&1
\end{pmatrix}, \qquad x,y\in \R^{d}, \quad t\in \R.
\end{equation*}
We shall use coordinates $x=(x_{1},\cdots x_{d})$, $y=(y_{1},\cdots, y_{d})$ and $t$ as above to represent an element of
$\bH_{d}$.

Let $r=(r_{1},\cdots, r_{d})\in \Z^{d}$ be such that $r_{j}|r_{j+1}$ for $j=1,\cdots, d-1$ and consider the lattice
subgroup,
\begin{equation*}
    \Gamma_{r}=\left\{  \begin{pmatrix}
1 & x & t\\
0 & 1 & y^{T}\\
0&0&1
\end{pmatrix}; \ x\in \Z^{d}, \ y\in r_{1}\Z \times \cdots \times r_{d}\Z, \ t \in \Z \right\}.
\end{equation*}In addition, consider the quotient manifold,
\begin{equation*}
    M:=\Gamma_{r} \backslash \bH_{d}.
\end{equation*}This is a compact manifold with fundamental group $\bH_{d}$.
Moreover, a fundamental domain for this quotient is \[D=[0,1)^{d}\times
[0,r_{1})\times \cdots \times[0,r_{d})\times [0,1).\]

A tangent frame of $\bH_{d}$ is provided by the left-invariant vector fields, 
\begin{equation*}
    X_{j}=\frac{\bd}{\bd x_{j}},
\qquad Y_{j}=\frac{\bd}{\bd y_{j}}+x_{j}\frac{\bd}{\bd t},
\qquad T=\frac{\bd}{\bd t},
\end{equation*}where $j$ ranges over $1, \cdots, d$. The
standard contact form on $\cH_{d}$ is the left-invariant
1-form given by
\begin{equation*}
    \theta:=dt -\sum_{1\leq j \leq d}x_{j}dy_{j}.
\end{equation*}Notice that the 1-forms $dt$ and $dx_{j}$ and $dy_{j}$ are
left-invariant too.

Let $s>0$. We endow $\bH_{d}$ with the left-invariant metric,
\begin{equation}
    g_{s}:= \sum_{1\leq j \leq d}dx_{j}
\otimes dx_{j}+\sum_{1\leq j \leq d}s^{-2}dy_{j}\otimes dy_{j}+s^{2d} \theta
    \otimes \theta.
    \label{eq:Heisenberg.metric-gs}
\end{equation}This is the type of left-invariant Riemannian metrics
considered in~\cite{GW}. Notice that the volume of $M$ with respect to $g_{s}$ is independent of $s$ and is equal to
\begin{equation*}
    \left| \Gamma_{r}\right|:= r_{1}\cdots r_{d}.
\end{equation*}
Observe that
$\{X_{j},sY_{j},s^{-d}T\}$ is an orthonormal frame for this metric and $\det g_{s}=1$. Therefore, the Laplace
operator $\Delta_{g_{s}}$ on $\bH_{d}$ is given by
\begin{equation}
    \Delta_{g_{s}}=-\sum_{1\leq j \leq d}(X_{j}^{2}+s^{2}Y_{j}^{2}) -s^{-2d}T^{2}.
    \label{eq:Heisenberg.Laplacian}
\end{equation}

In addition, it follows from the results of~\cite{Jen} that the Ricci
tensor of $g_{s}$ (seen as a $(0,2)$-tensor) is given by
\begin{align}
    \op{Ric}_{g_{s}}&=-\frac{1}{2}s^{2d+2}\sum_{1\leq j \leq d} (dx_{j}\otimes dx_{j}+s^{-2}dy_{j}\otimes
    dy_{j})+\frac{d}{2}s^{4d+2}\theta \otimes \theta  \nonumber \\
    &=-\frac{1}{2}s^{2d+2}g+\frac{(d+1)}{2}s^{4d+2}\theta \otimes \theta. \label{eq:Ricci-Heisenberg}
\end{align}
We then get the following formula for the scalar curvature,
\begin{equation}
        R_{g_{s}}= -\frac{d}{2}s^{2d+2}.
        \label{eq:Scalar-Heisenberg}
\end{equation}

\subsection{Spectral resolution of the Yamabe operator}
As all the objects above are left-invariant, they descend to $M=\Gamma \backslash \bH_d$. In particular, the Laplacian $\Delta_{g_{s}}$ on $M$
is given by~(\ref{eq:Heisenberg.Laplacian}), where the vector fields $X_{j}$, $Y_{j}$ and $T$ are meant as vector fields on $M$.
Therefore, combining this with the above formula for the scalar curvature we obtain

\begin{proposition}\label{prop:Heisenberg.Yamabe}
    The Yamabe operator of $(M,g_{s})$ is given by
\begin{equation*}
        P_{1,g_{s}}= -\sum_{1\leq j \leq d}(X_{j}^{2}+s^{2}Y_{j}^{2}) -s^{-2d}T^{2}-\frac{2d-1}{16}s^{2d+2}.
    \end{equation*}
\end{proposition}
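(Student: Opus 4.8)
The plan is to combine the general formula for the Yamabe operator~\eqref{eq:Yamabe-operator} with the two explicit computations already available for the metric $g_{s}$ on the Heisenberg manifold, namely the expression~\eqref{eq:Heisenberg.Laplacian} for the Laplacian $\Delta_{g_{s}}$ and the formula~\eqref{eq:Scalar-Heisenberg} for the scalar curvature $R_{g_{s}}$. By definition,
\begin{equation*}
    P_{1,g_{s}}=\Delta_{g_{s}}+\frac{n-2}{4(n-1)}R_{g_{s}},
\end{equation*}
so the only task is to substitute the known quantities and verify that the scalar coefficient collapses to $-\frac{2d-1}{16}s^{2d+2}$.

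First I would record the dimension of $M=\Gamma_{r}\backslash\bH_{d}$, which is $n=2d+1$. Substituting this into the Yamabe coefficient gives
\begin{equation*}
    \frac{n-2}{4(n-1)}=\frac{(2d+1)-2}{4\bigl((2d+1)-1\bigr)}=\frac{2d-1}{8d}.
\end{equation*}
Next I would insert the scalar curvature $R_{g_{s}}=-\frac{d}{2}s^{2d+2}$ from~\eqref{eq:Scalar-Heisenberg}, so that the curvature term of the Yamabe operator becomes
\begin{equation*}
    \frac{2d-1}{8d}\cdot\left(-\frac{d}{2}s^{2d+2}\right)=-\frac{2d-1}{16}s^{2d+2},
\end{equation*}
where the factor of $d$ cancels. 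Finally, I would copy the Laplacian~\eqref{eq:Heisenberg.Laplacian} verbatim and add this constant multiple of the identity, yielding exactly the claimed expression for $P_{1,g_{s}}$.

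This proof is entirely a routine substitution, so there is no genuine obstacle; the only points requiring care are bookkeeping ones. One must make sure to use $n=2d+1$ rather than $2d$ or $2d+1+\text{(something)}$, since an off-by-one error in the dimension would change the coefficient. One should also note that the Laplacian and scalar curvature formulas, originally derived on $\bH_{d}$, descend unchanged to the quotient $M$ because all the vector fields $X_{j},Y_{j},T$ and the metric $g_{s}$ are left-invariant, as already observed in the text preceding the statement. Granting these two observations, the displayed identity for $P_{1,g_{s}}$ follows immediately.
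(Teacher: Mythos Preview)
Your proposal is correct and matches the paper's approach exactly: the paper simply states that one combines the expression~\eqref{eq:Heisenberg.Laplacian} for $\Delta_{g_s}$ with the scalar curvature formula~\eqref{eq:Scalar-Heisenberg}, using left-invariance to pass to the quotient, and your write-up carries out precisely this substitution with the correct dimension $n=2d+1$.
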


The spectral resolution of $\Delta_{g_{s}}$ on $M$ is intimately related to the representation theory of $\bH_{d}$.
Indeed, the right-action of $\bH_{d}$ on itself descends to a right-action on $M$, and hence the right-regular
representation descends to the unitary representation
\[\rho:\bH_{d}\longrightarrow L^{2}(M).\]
This representation can be decomposed into irreducible representations as follows.

Recall that the irreducible representations of $\bH_{d}$ are of two types:
\begin{enumerate}
    \item[(i)]   The characters
$\chi_{(\xi,\eta)}:\bH_{d}\rightarrow \mathbb{C}$, $(\xi,\eta)\in \R^{d}\times \R^{d}$, defined by
\begin{equation}
    \chi_{(\xi,\eta)}(x,y,t)=e^{2i\pi(\xi\cdot x+\eta \cdot y)}.
    \label{eq:Heisenberg.chixieta}
\end{equation}

    \item[(ii)] The infinite dimensional representations $\pi_{h}:\bH_{d}\rightarrow
\cL\left(L^{2}(\R^{d})\right)$, $h \in \R^{*}$, given by
\begin{equation}
   \left[ \pi_{h}(x,y,t)f\right](\xi):=e^{2i\pi h(t+y\cdot \xi)}f(\xi+x) \qquad \forall f \in L^{2}(\R^{d}).
   \label{eq:Heisenberg.pih}
\end{equation}
\end{enumerate}
We observe that for the characters $ \chi_{(\xi,\eta)}$ we have
\begin{equation}
    X_{j}\chi_{(\xi,\eta)}=2i\pi \xi_{j}\chi_{(\xi,\eta)}, \qquad Y_{j}\chi_{(\xi,\eta)}(Y_{j})=2i\pi \eta_{j}\chi_{(\xi,\eta)}, \qquad
    T\chi_{(\xi,\eta)}=0.
    \label{eq:Heisenberg.XYT-chi}
\end{equation}For the  representations $\pi_{h}$, we have
\begin{equation}
    d\pi_{h}(X_{j})=\frac{\bd }{\bd \xi_{j}}, \qquad d\pi_{h}(Y_{j})=2i\pi h \xi_{j}, \qquad d\pi_{h}(T)=2i\pi h.
    \label{eq:Heisenberg.XYT-pih}
\end{equation}

For $n \in \Z$, define
\begin{equation*}
    \cH_{n}:=\left\{ f \in L^{2}(M); \ f(x,y,t)=e^{2in\pi t}g(x,y)\right\}.
\end{equation*}In particular, $\cH_{0}$ is the space of functions that do not depend on the $t$-variable.

Define
\begin{gather*}
    \Lambda =\Z^{d} \times \left((r_{1}\Z)\times \cdots \times (r_{d}\Z)\right),\\
    \Lambda'=\{(\mu,\nu)\in \R^{d}\times \R^{d};\ \mu\cdot x+\nu \cdot y \in \Z \quad \forall (x,y)\in \Lambda\}.
\end{gather*}
Notice that $\Lambda$ is the lattice of $\R^{2d}$ given by the image of $\Gamma$ under the projection
$(x,y,t)\rightarrow (x,y)$. The set $\Lambda'$ is its dual lattice.

Let $n \in \Z^{*}$. We set
\begin{gather*}
    \cA_{n}:=\left\{a=(a_{1},\cdots, a_{d})\in \R^{d}; \ a_{j}\in \biggl\{0,\frac{1}{|n|},\cdots,
    \frac{|n|-1}{|n|}\biggr\}\right\},\\
   \cB:=\left\{b=(b_{1},\cdots, b_{d}); b_{j}\in \biggl\{0,\frac{1}{r_{j}},\cdots, \frac{r_{j}-1}{r_{j}}\biggr\}\right\}.
\end{gather*}
For $a\in \cA_{n}$ and $b\in \cB$, we define the operator $W_{n}^{a,b}:L^{2}(\R^{d})\rightarrow L^{2}(M)$ by
\begin{equation*}
    W_{n}^{a,b}f(x,y,t)=e^{2i\pi n t} \sum_{k \in \Z_{d}}f(x+k+a+b)e^{2i\pi n(k+a+b)\cdot y}.  
\end{equation*}This is an isometry from $L^{2}(\R^{d})$ into $\cH_{n}$ (see~\cite{Fo}).
We then let \[\cH_{n}^{a,b}:=W_{n}^{a,b}(L^{2}(\R^{d})).\]

\begin{proposition}[Brezin~\cite{Bre}]
   We have the following orthogonal decompositions,
   \begin{gather*}
       L^{2}(M)=\bigoplus_{n\in \Z}\cH_{n},\\ \cH_{0}=\bigoplus_{(\mu,\nu)\in \Lambda^{'}}\mathbb{C} \chi_{(\mu,\nu)},
       \qquad \cH_{n}=\bigoplus_{\substack{a\in \cA_{n}\\ b\in \cB}} \cH_{n}^{a,b}, \quad n\neq 0.
   \end{gather*}Moreover, each operator $W_{n}^{a,b}$ is an intertwining operator from $\pi_{n}$ to the regular representation
   $\rho$. In particular, the multiplicity of $\pi_{n}$ in $\rho$ is equal to $|n|^{d}|r_{1}\cdots
   r_{d}=|n|^{d}|\Gamma_{r}|$.
\end{proposition}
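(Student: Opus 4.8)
The plan is to produce the decomposition in three stages: split $L^2(M)$ according to the characters of the center of $\bH_d$, then analyze the trivial and the nontrivial central characters separately. I identify $L^2(M)$ with the left-$\Gamma_r$-invariant functions in $L^2_{\mathrm{loc}}(\bH_d)$, use the group law $(x,y,t)\cdot(x',y',t')=(x+x',y+y',t+t'+x\cdot y')$ read off from the matrix form, and integrate over the fundamental domain $D$. The first step is to realize $L^2(M)=\bigoplus_{n\in\Z}\cH_n$ as the isotypic decomposition under the center. The center is the $t$-axis, and right translation $\rho(0,0,s)f(x,y,t)=f(x,y,t+s)$ descends to a unitary action of $\R/\Z$ on $L^2(M)$, well defined because $(0,0,1)\in\Gamma_r$. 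Its eigenspaces are exactly the spaces $\cH_n$, obtained by expanding the period-$1$ dependence on $t$ in a Fourier series; since the center is central, $\rho(h)$ commutes with this circle action for every $h$, so each $\cH_n$ is $\rho$-invariant with central character $s\mapsto e^{2i\pi ns}$. This gives the first displayed decomposition.

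For $n=0$ the functions are independent of $t$, and the residual invariance forces $f(x+m,y+\kappa)=f(x,y)$ for all $m\in\Z^d$ and $\kappa\in r_1\Z\times\cdots\times r_d\Z$; thus $f$ descends to the torus $\R^{2d}/\Lambda$ with $\Lambda=\Z^d\times(r_1\Z\times\cdots\times r_d\Z)$. The standard Fourier series on this torus provides an orthonormal basis of characters indexed by the dual lattice $\Lambda'$, and since \eqref{eq:Heisenberg.chixieta} shows that $\chi_{(\mu,\nu)}$ does not depend on $t$, these are precisely the $\chi_{(\mu,\nu)}$ with $(\mu,\nu)\in\Lambda'$. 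This yields $\cH_0=\bigoplus_{(\mu,\nu)\in\Lambda'}\C\,\chi_{(\mu,\nu)}$.

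The heart of the argument is the case $n\neq 0$. Writing $f=e^{2i\pi nt}g(x,y)$, left-$\Gamma_r$-invariance becomes the quasi-periodicity
\begin{equation*}
g(x+m,\,y+\kappa)=e^{-2i\pi n\,m\cdot y}\,g(x,y),\qquad m\in\Z^d,\ \kappa\in r_1\Z\times\cdots\times r_d\Z.
\end{equation*}
Expanding $g$ in $y$ over the torus $\prod_j(\R/r_j\Z)$ as $g(x,y)=\sum_\ell c_\ell(x)\,e^{2i\pi\ell\cdot y}$, with $\ell$ running over the dual lattice $\prod_j\frac1{r_j}\Z$, the $y$-periodicity is automatic, while the $x$-quasi-periodicity turns into the recursion $c_\ell(x+m)=c_{\ell+nm}(x)$. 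Hence the frequencies $\ell$ fall into orbits of the translation action $\ell\mapsto\ell+nm$ of $n\Z^d$ on $\prod_j\frac1{r_j}\Z$, and inside a single orbit the entire family $(c_\ell)$ is determined by one representative coefficient, which may be prescribed as an arbitrary element of $L^2(\R^d)$. A Parseval computation over $D$ shows that $g\mapsto(c_{\ell_0})_{\mathrm{orbits}}$ is, up to a positive constant, a unitary identification of $\cH_n$ with $\bigoplus_{\mathrm{orbits}}L^2(\R^d)$. Counting orbits coordinatewise, $\#(\frac1{r_j}\Z/n\Z)=|n|r_j$, gives $\prod_j|n|r_j=|n|^d|\Gamma_r|$ orbits, which is the claimed multiplicity; choosing orbit representatives in bijection with $\cA_n\times\cB$ identifies the summands with the spaces $\cH_n^{a,b}=W_n^{a,b}(L^2(\R^d))$. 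That each summand carries the representation $\pi_n$ follows from the intertwining identity $\rho(h)\circ W_n^{a,b}=W_n^{a,b}\circ\pi_n(h)$, which I would verify by direct substitution from the group law and \eqref{eq:Heisenberg.pih}; as $\pi_n$ is irreducible by Stone--von Neumann, this is the isotypic decomposition of $\cH_n$.

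I expect the completeness in the case $n\neq 0$ to be the main obstacle: one must show that the orbit data really lands in $L^2(\R^d)$ and that the maps $W_n^{a,b}$ together exhaust $\cH_n$, rather than merely embedding into it, which is where the convergence of the $y$-expansion and the Parseval identity over $D$ must be controlled. A secondary, bookkeeping difficulty is the orthogonality and non-redundancy of the family $\{\cH_n^{a,b}\}$: a direct evaluation of $\langle W_n^{a,b}f,\,W_n^{a',b'}f'\rangle$, after performing the $t$- and $y$-integrations over $D$, collapses to $|\Gamma_r|\,\langle f,f'\rangle$ when the two frequency orbits agree and to $0$ otherwise, so one must confirm that $\cA_n\times\cB$ indexes pairwise distinct orbits and thereby matches the count $|n|^d|\Gamma_r|$.
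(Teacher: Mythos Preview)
The paper does not prove this proposition at all: it is stated with the attribution ``Brezin~\cite{Bre}'' and immediately used without argument (the text following it begins ``Thanks to this result\ldots''). There is therefore no proof in the paper to compare your attempt against.

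That said, your sketch is the standard argument and is essentially correct. The three-stage plan (central character decomposition, Fourier analysis on the torus for $n=0$, quasi-periodicity and orbit analysis for $n\neq 0$) is exactly how this is done, and your derivation of the quasi-periodicity relation $g(x+m,y+\kappa)=e^{-2i\pi n\,m\cdot y}g(x,y)$ from the group law is right. The orbit count is also correct. The only places where you would need to be careful in a full write-up are the ones you already flagged: the Parseval computation showing that the $W_n^{a,b}$ are isometries (up to the normalizing constant $|\Gamma_r|^{1/2}$ that comes from the $y$-integration over $\prod_j[0,r_j)$), and checking that the map $(a,b)\mapsto n(a+b)\bmod n\Z^d$ from $\cA_n\times\cB$ to the orbit set is a bijection rather than merely a surjection---this is where the divisibility hypothesis $r_j\mid r_{j+1}$ is not actually needed, but one does need to verify that distinct pairs $(a,b)$ give distinct cosets in $\prod_j\frac{1}{r_j}\Z/n\Z^d$, which follows since $na_j$ runs over a complete residue system modulo $|n|$ in $\Z$ and $nb_j$ runs over a complete residue system modulo $1$ in $\frac{1}{r_j}\Z$.
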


Thanks to this result the spectral analysis of $\Delta_{g_{s}}$  on $L^{2}(M)$ reduces to the spectral analysis on each
of the irreducible
subspaces $\mathbb{C} \chi_{(\mu,\nu)}$ and $\cH_{n}^{a,b}$.

Let $(\xi,\eta)\in \Lambda'$. Then from~(\ref{eq:Heisenberg.Laplacian}) and~(\ref{eq:Heisenberg.XYT-chi}) we see that
\begin{equation*}
    \Delta_{g_{s}}\chi_{(\xi,\eta)}=4\pi^{2}(|\xi|^{2}+s^{2}|\eta|^{2})\chi_{(\xi,\eta)}.
\end{equation*}That is, $\chi_{(\xi,\eta)}$ is an eigenfunction of $\Delta_{g_{s}}$ w.r.t.~the eigenvalue
$\lambda=4\pi^{2}(|\xi|^{2}+s^{2}|\eta|^{2})$.

Let $n\in \Z^{*}$. Using~(\ref{eq:Heisenberg.Laplacian}) and~(\ref{eq:Heisenberg.XYT-pih}) we get
\begin{equation*}
    d\pi_{n}(\Delta_{g_{s}})=\sum_{1\leq j \leq d}\left(- \partial_{\xi_{j}}^{2}+4n^{2}s^{2}\pi^{2}
    \xi_{j}^{2}\right)+4n^{2}s^{-2d}\pi^{2}.
\end{equation*}Under the change of variable $\eta_{j}=\sqrt{2\pi|n|s}\,\xi_{j}$ this becomes
\begin{equation*}
    2\pi |n|s \sum_{1\leq j \leq d}\left(- \partial_{\eta_{j}}^{2}+
    \eta_{j}^{2}\right)+4n^{2}s^{-2d}\pi^{2}.
\end{equation*}

Recall that on $\R$ that an orthogonal eigenbasis of $L^{2}(\R)$ for the harmonic oscillator $- \frac{d^{2}}{dv^{2}}+v^{2}$
is given by the Hermite functions,
    $h_{k}(v)$, $k \in
    \N_{0}$, such that
    \begin{equation*}
      h_{k}(v):=(-1)^{k}\frac{d^{k}}{dv^{k}}e^{-\frac{1}{2}v^{2}}, \qquad   \left(-
      \frac{d^{2}}{dv^{2}}+v^{2}\right)h_{k}(v)=(1+2k)h_{k}(v).
    \end{equation*}Notice that $e^{\frac{1}{2}v^{2}}h_{k}(v)$ is a polynomial of degree $k$.

    From all this we deduce that an orthogonal basis of eigenfunctions of $d\pi_{n}(\Delta_{g_{s}})$ is given by
    the functions,
    \begin{equation*}
        f_{\alpha}(\xi):=\prod_{1\leq j \leq d}h_{\alpha_{j}}(\sqrt{2\pi|n|s}\, \xi_{j}), \qquad \alpha \in \N_{0}^{d},
    \end{equation*}in such way that
    \begin{equation*}
        d\pi_{n}(\Delta_{g_{s}})f_{\alpha}=\left(2\pi |n|s (d+2|\alpha|)+4n^{2}s^{-2d}\pi^{2}\right)f_{\alpha}.
    \end{equation*}Notice that each eigenvalue $2\pi |n|s (d+2|\alpha|)+4n^{2}s^{-2d}\pi^{2}$ occurs with multiplicity
    $\displaystyle\binom{|\alpha|+d-1}{d-1}$.

As $W_{n}^{a,b}$ intertwines from $\pi_{n}$ to the regular representation $\rho$, we see that an orthogonal eigenbasis of $\cH_{n}^{a,b}$ for
$\Delta_{g_{s}}$ is given by the functions,
\begin{align}
    W_{n}^{a,b}f_{\alpha}(x,y,t) & =e^{2in \pi t} \sum_{k\in \Z^{d}}f_{\alpha}(x+k+a+b)e^{2in\pi (k+a+b)\cdot y} \nonumber \\
    & = e^{2in \pi t} \sum_{k\in \Z^{d}} \prod_{1\leq j \leq d}h_{\alpha_{j}}\left(\sqrt{2\pi|n|s}\,
    (x_{j}+k_{j}+a_{j}+b_{j})\right)e^{2in\pi (k_{j}+a+b)y_{j}} \label{eq:Heisenberg.wnabfalpha}\\
   &= e^{2in \pi t} \prod_{1\leq j \leq d} \left\{\sum_{k\in \Z} h_{\alpha_{j}}\left(\sqrt{2\pi|n|s}\,
    (x_{j}+k_{j}+a_{j}+b_{j})\right)e^{2in\pi (k+a+b)y_{j}}\right\}.\nonumber
\end{align}Each $W_{n}^{a,b}f_{\alpha}$ is an eigenfunction for the eigenvalue $2\pi |n|s
(d+2|\alpha|)+4n^{2}s^{-2d}\pi^{2}$. This eigenvalue has multiplicity $\displaystyle |n|^{d}r_{1}\cdots
   r_{d} \binom{|\alpha|+d-1}{d-1}$ in $\cH_{n}$.

   As it turns out, for $\alpha =0$ the function  $W_{n}^{a,b}f_{0}$ can be expressed in terms of Jacobi's theta function,
\begin{equation*}
    \vartheta(z,\tau):= \sum_{k\in \Z} e^{i\pi k^{2}\tau}e^{2i\pi k z}, \qquad z,\tau\in  \mathbb{C}, \ \Im \tau>0.
\end{equation*}If $\alpha =0$, then $h_{\alpha_{j}}(v)=h_{0}(v)=e^{-\frac{1}{2}v^{2}}$ for $j=1,\cdots, d$. Moreover,
for $u>0$ and $v,c\in \R$, we have
\begin{align*}
    \sum_{k\in \Z} h_{0}\left(\sqrt{2\pi|n|s}\,
    (u+k)\right)e^{2in\pi (k+c)v}
     &= \sum_{k\in \Z}e^{-\pi|n|s(u+k)^{2}}e^{2in\pi (k+c)v} \\
      &= e^{2in\pi cv} \sum_{k\in \Z}e^{-\pi|n|s(u^{2}+2ku+u^{2})}e^{2in\pi kv} \\
      &= e^{-\pi|n|su^{2}} e^{2in\pi cv}\sum_{k\in \Z}e^{-\pi|n|s u^{2}}e^{2in\pi k(v\pm i su)} \\
      &= e^{-\pi|n|su^{2}} e^{2in\pi cv}\vartheta(v\pm isu, i|n|s),
\end{align*}where $\pm$ is the sign of $n$. Applying this equality to $u=x_{j}+a_{j}+b_{j}$, $v=y_{j}$ and
$c=a_{j}+b_{j}$ and using~(\ref{eq:Heisenberg.wnabfalpha}) we get

\begin{align}
    &W_{n}^{a,b}f_{0}(x,y,t) = \nonumber\\ 
    &\quad =e^{2in \pi t} \prod_{1\leq j \leq d} \left\{ e^{-\pi|n|s(x_{j}+a_{j}+b_{j})^{2}}
    e^{2in\pi (a_{j}+b_{j})v}\vartheta\left(y_{j}\pm is(x_{j}+a_{j}+b_{j}), i|n|s\right) \right\} \nonumber\\
     & \quad= e^{2in \pi t} e^{-\pi |n| s \, |x+a+b|^{2}}e^{2in\pi (a+b)\cdot v}\prod_{1\leq j \leq d} \vartheta\left(y_{j}\pm
    is(x_{j}+a_{j}+b_{j}), i|n|s\right). \label{eq:Heisenberg.wnabf0}
\end{align}

   Combining  the previous discussion with Proposition~\ref{prop:Heisenberg.Yamabe} we obtain

\begin{proposition}[Compare~{\cite[Theorem 3.3]{GW}}]\label{Yamabe:nil} 
Assume $M=\Gamma_{r}\backslash \bH_{d}$ is equipped with the metric $g_{s}$ given
    by~(\ref{eq:Heisenberg.metric-gs}).
    \begin{enumerate}
        \item  An orthogonal eigenbasis of $L^{2}(M)$ for the Yamabe operator $P_{1,g_{s}}$ is given by the join,
        \begin{equation}
           \left\{  \chi_{(\xi,\eta)};\ (\xi,\eta)\in \Lambda'\right\} \bigcup \left\{W_{n}^{a,b}f_{\alpha};\
           (n,a,b,\alpha)\in\Z^{*}\times \cA_{n}\times \cB\times
            \N_{0}^{d}\right\}.
            \label{eq:Heisenberg.eigenfunctions}
\end{equation}The characters $ \chi_{(\xi,\eta)}$ are  given by~(\ref{eq:Heisenberg.chixieta}). 
The functions
$W_{n}^{a,b}f_{\alpha}$ are given by~(\ref{eq:Heisenberg.wnabfalpha}), which reduces 
to~(\ref{eq:Heisenberg.wnabf0}) when $\alpha=0$.

        \item  Each character $\chi_{(\xi,\eta)}$ is an eigenfunction of $P_{1,g_{s}}$ 
with eigenvalue
        \begin{equation*}
         \lambda(\xi,\eta):= 4\pi^{2}\left(|\xi|^{2}+s^{2}|\eta|^{2}\right)-\frac{2d-1}{16}s^{2d+2}.
        \end{equation*}

        \item  Each function $W_{n}^{a,b}f_{\alpha}$ is an eigenfunction of $P_{1,g_{s}}$ 
with eigenvalue
        \begin{equation*}
         \mu(n,|\alpha|):=  2 \pi |n|s (d+2|\alpha|)+4n^{2}s^{-2d}\pi^{2}-\frac{2d-1}{16}s^{2d+2}.
        \end{equation*}This eigenvalue  has multiplicity $\displaystyle |n|^{d}|\Gamma_{r}|
        \binom{|\alpha|+d-1}{d-1}$ in $\cH_{n}$.
    \end{enumerate}
 \end{proposition}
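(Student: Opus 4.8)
The plan is to reduce the spectral resolution of the Yamabe operator entirely to that of the Laplacian $\Delta_{g_s}$, which has already been worked out above by way of Brezin's decomposition. The starting observation is that, by Proposition~\ref{prop:Heisenberg.Yamabe}, the left-invariant metric $g_s$ has constant scalar curvature, so that
\[
P_{1,g_s} = \Delta_{g_s} - \frac{2d-1}{16}s^{2d+2}.
\]
Since the zeroth-order term is a real constant, $P_{1,g_s}$ and $\Delta_{g_s}$ share exactly the same eigenfunctions, and the eigenvalues of $P_{1,g_s}$ are simply those of $\Delta_{g_s}$ shifted by $-\tfrac{2d-1}{16}s^{2d+2}$. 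Consequently parts (2) and (3) will follow at once from the two eigenvalue computations carried out just before the statement, namely $\Delta_{g_s}\chi_{(\xi,\eta)} = 4\pi^2(|\xi|^2+s^2|\eta|^2)\chi_{(\xi,\eta)}$ and $\Delta_{g_s}(W_n^{a,b}f_\alpha) = \bigl(2\pi|n|s(d+2|\alpha|)+4n^2s^{-2d}\pi^2\bigr)W_n^{a,b}f_\alpha$, once the constant is subtracted off.

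For part (1) the task is to verify that the indicated join is a complete orthogonal system of eigenfunctions, and here the organizing principle will be Brezin's orthogonal decomposition $L^2(M)=\bigoplus_{n\in\Z}\cH_n$, with $\cH_0=\bigoplus_{(\mu,\nu)\in\Lambda'}\C\chi_{(\mu,\nu)}$ and $\cH_n=\bigoplus_{a\in\cA_{n},\,b\in\cB}\cH_n^{a,b}$. First I would record orthogonality: distinct summands of this decomposition are mutually orthogonal, the characters $\chi_{(\xi,\eta)}$ are orthogonal inside $\cH_0$, and inside each $\cH_n^{a,b}=W_n^{a,b}(L^2(\R^d))$ the functions $W_n^{a,b}f_\alpha$ are orthogonal because $W_n^{a,b}$ is an isometry and the Hermite-type functions $\{f_\alpha\}_{\alpha\in\N_0^d}$ form an orthogonal basis of $L^2(\R^d)$. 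Completeness follows the same pattern: the characters exhaust $\cH_0$, the $W_n^{a,b}f_\alpha$ exhaust each $\cH_n^{a,b}$ since $W_n^{a,b}$ maps onto $\cH_n^{a,b}$, and summing over $n$, $a$, $b$ recovers all of $L^2(M)$.

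The remaining point is the multiplicity count in (3). For fixed $n$, the eigenvalue $\mu(n,|\alpha|)$ is independent of $(a,b)$ and of the particular multi-index $\alpha$ once $|\alpha|$ is fixed, and it is strictly increasing in $|\alpha|$, so within $\cH_n$ no two distinct values of $|\alpha|$ collide. Hence its multiplicity in $\cH_n$ is the number of pairs $(a,b)$, namely $|\cA_{n}|\cdot|\cB| = |n|^d\, r_1\cdots r_d = |n|^d|\Gamma_r|$, times the number of $\alpha\in\N_0^d$ with $|\alpha|$ fixed, namely $\binom{|\alpha|+d-1}{d-1}$; multiplying these gives the stated count.

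I do not anticipate a genuine obstacle here: all of the analytic content---the intertwining and isometry properties of $W_n^{a,b}$, the harmonic-oscillator diagonalization producing the $f_\alpha$, and above all the constancy of $R_{g_s}$---is already in place, so the proof is essentially a bookkeeping assembly of these ingredients rather than a new computation. If there is any step demanding care, it is keeping the eigenvalue accounting component-by-component: the eigenvalue $\mu(n,|\alpha|)$ does not depend on $(a,b)$, so all of the $\cH_n^{a,b}$ feed the \emph{same} eigenvalue, which is precisely why the multiplicity within $\cH_n$ is the sum over those subspaces rather than a per-subspace figure.
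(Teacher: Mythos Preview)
Your proposal is correct and follows essentially the same approach as the paper: the proposition is presented there as an immediate consequence of combining the formula $P_{1,g_s}=\Delta_{g_s}-\tfrac{2d-1}{16}s^{2d+2}$ from Proposition~\ref{prop:Heisenberg.Yamabe} with the spectral resolution of $\Delta_{g_s}$ worked out just prior via Brezin's decomposition and the harmonic-oscillator diagonalization. Your write-up in fact spells out a few points (orthogonality and completeness via the isometry property of $W_n^{a,b}$, and the strict monotonicity of $\mu(n,|\alpha|)$ in $|\alpha|$ ruling out collisions within $\cH_n$) that the paper leaves implicit.
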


 \begin{remark}[See also~{\cite[Theorem 3.3]{GW}}]\label{rmk:spectrum-Laplacian-Heisenberg}
 The above considerations also provides us with a spectral resolution of the Laplacian $\Delta_{g_{s}}$. More 
 precisely, we see that 
  \begin{enumerate}
      \item[(i)]  Each $\chi_{(\xi,\eta)}$ is an eigenfunction of $\Delta_{g_{s}}$ 
with eigenvalue
        \begin{equation}
         \lambda_{0}(\xi,\eta):= 4\pi^{2}\left(|\xi|^{2}+s^{2}|\eta|^{2}\right).
         \label{eq:spectrum-Laplacian-Heisenberg-l0}
        \end{equation}
  
      \item[(ii)] Each function $W_{n}^{a,b}f_{\alpha}$ is an eigenfunction of $P_{1,g_{s}}$ 
with eigenvalue
        \begin{equation}
         \mu_{0}(n,|\alpha|):=  2 \pi |n|s (d+2|\alpha|)+4n^{2}s^{-2d}\pi^{2}.
           \label{eq:spectrum-Laplacian-Heisenberg-m0}
        \end{equation}
  \end{enumerate}
 \end{remark}

%%%%%%%%

\subsection{Nodal sets and negative eigenvalues of $P_{1,g_{s}}$} 

We observe that an eigenfunction $W_{n}^{a,b}f_{\alpha}$ lies in the nullspace of $P_{1,g_{s}}$ if and only if
\begin{equation}
    2 \pi |n|s (d+2|\alpha|)+4n^{2}s^{-2d}\pi^{2}-\frac{2d-1}{16}s^{2d+2}=0.
    \label{eq:condition-null-eigenvalue-Paneitz-Heisenberg}
\end{equation}If we multiply both sides of the equation by $-s^{2d}$ and set $v=s^{2d+1}$, then this equation
becomes the quadratic equation $ \frac{2d-1}{16}v^{2}-2 \pi |n| (d+2|\alpha|)v-4n^{2}\pi^{2}=0$, 
whose unique positive root is
 $ v=\frac{8\pi |n|}{2d-1}\left(2(d+2|\alpha|)+\sqrt{4(d+2|\alpha|)^{2}+2d-1}\right)$.
Therefore, $W_{n}^{a,b}f_{\alpha}$ lies in the nullspace of $P_{1,g_{s}}$ if and only if
\begin{equation}
  s^{2d+1}= \frac{8\pi |n|}{2d-1}\left(2(d+2|\alpha|)+\sqrt{4(d+2|\alpha|)^{2}+2d-1}\right).
  \label{eq:Heisenberg.zero-eigenvalue}
\end{equation}

Let us give a lower bound for $\nu_{1}(g_{s})$, i.e., the number of negative eigenvalues of $P_{1,g_{s}}$. 
An eigenvalue $\mu(n,|\alpha|)=  2 \pi |n|s (d+2|\alpha|)+4n^{2}s^{-2d}\pi^{2}-\frac{2d-1}{16}s^{2d+2}$
is negative if and only if
\begin{equation*}
  s^{2d+1}> \frac{8\pi |n|}{2d-1}\left(2(d+2|\alpha|)+
\sqrt{4(d+2|\alpha|)^{2}+2d-1}\right).
\end{equation*}In particular, for every integer $n$ such that  $\frac{8\pi |n|}{2d-1}\left(2d+\sqrt{4d^{2}+2d-1}\right)<s^{2d+1}$,
the eigenvalue $\mu(n,0)$ is negative. Moreover, such an eigenvalue occurs with multiplicity  $\displaystyle 
|n|^{d}|\Gamma_{r}|$. Therefore, we obtain

\begin{proposition}\label{prop:neg-Yamabe-Heisenberg}
There is a constant $c_{d}>0$ depending on $d$, but not on the sequence $r=(r_{1},\cdots, r_{d})$, such that
\begin{equation*}
    \nu_{1}(g_{s})\geq c_{d}|\Gamma_{r}|s^{2d+2} \qquad \forall s>0. 
\end{equation*} 
In particular, for every integer $m\in \N$, the Yamabe operator $P_{1,g_{s}}$ has at least $m$ negative 
eigenvalues as soon as $s$ is large enough. 
\end{proposition}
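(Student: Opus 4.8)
The plan is to extract negative eigenvalues directly from the explicit spectral resolution of Proposition~\ref{Yamabe:nil}. The factor $|\Gamma_{r}|$ in the asserted bound must come from the multiplicities $|n|^{d}|\Gamma_{r}|$ of the eigenvalues $\mu(n,|\alpha|)$ attached to the infinite-dimensional representations $\pi_{n}$, so these are the eigenvalues I would count. Writing $a_{d}:=\frac{8\pi}{2d-1}\bigl(2d+\sqrt{4d^{2}+2d-1}\bigr)$, the computation preceding the statement shows that $\mu(n,0)<0$ exactly when $|n|<s^{2d+1}/a_{d}$, and that this eigenvalue sits in $\cH_{n}$ with multiplicity $|n|^{d}|\Gamma_{r}|$.

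First I would sum these multiplicities over all admissible $n\in\Z^{*}$. Since the spaces $\cH_{n}$ are mutually orthogonal, this is a genuine lower bound:
\[
\nu_{1}(g_{s})\ \geq\ 2|\Gamma_{r}|\sum_{1\leq j<s^{2d+1}/a_{d}}j^{d}.
\]
Bounding the sum below by an integral gives a quantity of order $|\Gamma_{r}|\,(s^{2d+1}/a_{d})^{d+1}$, that is of order $|\Gamma_{r}|\,s^{(2d+1)(d+1)}$. Because $(2d+1)(d+1)-(2d+2)=(2d-1)(d+1)\geq 0$ for $d\geq 1$, for all sufficiently large $s$ this is already $\geq c_{d}|\Gamma_{r}|s^{2d+2}$ with a suitable $c_{d}>0$; as the right-hand side then tends to $\infty$ with $s$, this alone yields the final ``in particular'' assertion.

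It remains to cover small $s$, where the term $4n^{2}s^{-2d}\pi^{2}$ forces every $\mu(n,|\alpha|)$ to be positive and the representation count is empty. Here I would instead use the characters: by part~(2) of Proposition~\ref{Yamabe:nil}, $\lambda(0,\eta)<0$ precisely when $|\eta|<\frac{\sqrt{2d-1}}{8\pi}s^{d}$, with $\eta$ running over the lattice $\frac{1}{r_{1}}\Z\times\cdots\times\frac{1}{r_{d}}\Z$ of covolume $1/|\Gamma_{r}|$. Counting lattice points in this ball again produces the factor $|\Gamma_{r}|$, and the idea would be to splice this count to the representation count across the crossover scale $s\sim a_{d}^{1/(2d+1)}$.

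The hard part is exactly this splicing: securing a single constant $c_{d}$, depending on $d$ but not on $r$, that is valid for every $s>0$. The representation sum is nonzero only once $s^{2d+1}>a_{d}$, so for small $s$ everything rests on the discrete character count, and lower bounds for the number of lattice points in a small ball are governed by the anisotropy of $\frac{1}{r_{1}}\Z\times\cdots\times\frac{1}{r_{d}}\Z$ rather than by its covolume alone. Matching this count to the target power $s^{2d+2}$ uniformly in $r$ through the crossover is the delicate step; by contrast the large-$s$ regime, which is all the application actually requires, follows immediately from the representation count above.
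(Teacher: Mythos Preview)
Your large-$s$ argument via the representation eigenvalues $\mu(n,0)$ is exactly the paper's proof: the paragraph preceding the proposition shows that $\mu(n,0)<0$ whenever $|n|\,a_{d}<s^{2d+1}$ (with $a_{d}$ as you define it), records the multiplicity $|n|^{d}|\Gamma_{r}|$, and then simply writes ``Therefore, we obtain'' the proposition. Your summation of these multiplicities and the exponent comparison $(2d+1)(d+1)\geq 2d+2$ make the implicit step explicit and are correct; this already yields the ``in particular'' clause, which is all the paper actually uses later.

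The gap you isolate for small $s$ is real, and the paper does not fill it: no argument is given there for the range $s^{2d+1}\leq a_{d}$, and the character count is not invoked. So you are not missing an idea from the paper; rather, you have noticed that the uniform ``$\forall\, s>0$'' assertion is not fully justified by what is written. Your proposed patch via the character eigenvalues $\lambda(0,\eta)$ is the natural route, and in fact it can be pushed through: inscribing a box in the ball $|\eta|<\frac{\sqrt{2d-1}}{8\pi}s^{d}$ and using $2\lfloor r_{j}\rho\rfloor+1\geq\max\{1,r_{j}\rho\}$ coordinatewise, one checks case-by-case that for $s\leq a_{d}^{1/(2d+1)}$ the resulting count dominates $c_{d}|\Gamma_{r}|s^{2d+2}$ for a sufficiently small $c_{d}$ depending only on $d$ (the key point being that whenever a factor degenerates to $1$, the corresponding $r_{j}$ is bounded above in terms of $s$ and $d$, which compensates in the product). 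But none of this is in the paper; your write-up is already more careful than the source on this point.
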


Suppose now that $s^{2d+1}= \frac{8\pi}{2d-1}\left(2d+\sqrt{4d^{2}+2d-1}\right)$, 
i.e.,~(\ref{eq:Heisenberg.zero-eigenvalue}) holds for $n=\pm 1$
and $\alpha=0$. Notice that $\cA_{\pm 1}=\{0\}$, so $\ker P_{1,g_{s}}\cup \cH_{0}^{\perp}$ is spanned
by the functions $W_{\pm 1}^{0,b}f_{0}$ given by~(\ref{eq:Heisenberg.wnabf0}) where $b$ ranges over $\cB$. 
Furthermore, the transcendence of $\pi$ implies that if
$s^{2d+1}= \frac{8\pi}{2d-1}\left(2d+\sqrt{4d^{2}+2d-1}\right)$, then no
element $(\xi,\eta)\in \Lambda'$ satisfies $\lambda(\xi,\eta)=|\xi|^{2}+s^{2}|\eta|^{2}-\frac{2d-1}{16}s^{2d+2}=0$,
i.e., no character $\chi_{(\xi,\eta)}$ is contained in $\ker P_{1,g_{s}}$. Thus the functions $W_{\pm 1}^{0,b}f_{0}$,
$b \in \cB$, form an orthogonal eigenbasis of $\ker P_{1,g_{s}}$.

Let us now look at the nodal sets of the eigenfunctions $W_{\pm 1}^{0,b}f_{0}$. 
It follows from~(\ref{eq:Heisenberg.wnabf0}) that $W_{\pm
1}^{0,b}f_{0}(x,y,t)=0$ if and only if
\begin{equation}
    \vartheta\left(y_{j}\pm is(x_{j}+b_{j}), is\right)=0 \qquad \text{for some $j\in \{1,\cdots,d\}$}.
    \label{eq:Heisenberg.nodal-set-theta-condition}
\end{equation}
Moreover, by Jacobi's triple product formula,
\begin{equation*}
         \vartheta(z,is) = \prod_{m=1}^{\infty}(1-e^{-2m\pi s})(1+e^{2i\pi z}e^{-(2m-1)\pi s})(1+e^{-2i\pi
       z}e^{-(2m-1)\pi s}).
    \end{equation*}
Thus, for $z=v+isu$, $u,v\in \R$, we see that $\vartheta(z,is)=0$ if and only if there 
is $m\in \Z$ such that
$e^{-(2m-1)\pi s}=-  e^{2i \pi z} =e^{-2\pi s u+i\pi (2v+1)}$, that is, $u$ and $v$ are contained in 
$\frac{1}{2}+\Z$.
Applying this to $u=x_{j}+b_{j}$ and $v= \pm y_{j}$ with $(x_{j},y_{j})\in [0,1)\times [0,r_{j})$ 
and $b_{j}\in \{0,r_{j}^{-1}, \cdots,
1-r_{l}^{-1}\}$ it not hard to deduce that
\begin{equation*}
   \vartheta\left(y_{j}\pm is(x_{j}+b_{j}), is\right)=0 \Longleftrightarrow \left\{
   \begin{array}{l}
     x_{j}=\pm
(b_{j}-\frac{1}{2})-\left[\pm (b_{j}-\frac{1}{2})\right], \\
     y_{j}\in \left\{\frac{1}{2},\frac{3}{2},\cdots,
r_{j}-\frac{1}{2}\right\},
   \end{array}\right.
\end{equation*}where $\left [\cdot \right]$ is the floor function. (Here $x_{j}=\pm
(b_{j}-\frac{1}{2})-\left[\pm (b_{j}-\frac{1}{2})\right]$ is the only element of $[0,1)$ such 
that $\pm (x_{j}+b_{j})$ is
a half-integer.) Combining with~(\ref{eq:Heisenberg.nodal-set-theta-condition}) enables us to 
get the nodal set of $W_{\pm 1}^{0,b}f_{0}$.

Summarizing the previous discussion, we have proved

\begin{proposition}\label{prop:negative-eigenvalues-Yamabe-Heisenberg}
  Let $s$ be the $(2d+1)$-th root of  
$\frac{8\pi}{2d-1}\left(2d+\sqrt{4d^{2}+2d-1}\right)$.
  \begin{enumerate}
      
\item  The functions $W_{\pm 1}^{0,b}f_{0}$, $b\in \cB$, form an 
orthogonal basis of $\ker P_{1,g_{s}}$.

\item  The nodal set of $W_{\pm 1}^{0,b}f_{0}$ is given by the join,
      
\begin{equation*}
          \bigcup_{\substack {1\leq j \leq d\\ 1\leq l \leq r_{j}}}
\left\{(x,y,t)\in M;  x_{j}=\pm
(b_{j}-\frac{1}{2})-\left[\pm (b_{j}-\frac{1}{2})\right] ,
\  y_{j}=l-\frac{1}{2}\right\}.
      \end{equation*}
  \end{enumerate}
\end{proposition}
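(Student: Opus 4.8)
The plan is to combine the spectral resolution established earlier with a concrete analysis of the theta-function zeros. First I would verify part (1): by the discussion preceding the proposition, when $s^{2d+1}=\frac{8\pi}{2d-1}(2d+\sqrt{4d^{2}+2d-1})$, condition~(\ref{eq:Heisenberg.zero-eigenvalue}) holds precisely for $n=\pm 1$, $\alpha=0$, so only the eigenfunctions $W_{\pm 1}^{0,b}f_{0}$ can lie in $\ker P_{1,g_{s}}$ among the $W_{n}^{a,b}f_{\alpha}$. Since $\cA_{\pm 1}=\{0\}$, the label $a$ is forced to be $0$, and as $b$ ranges over $\cB$ these functions form an orthogonal family inside $\cH_{1}\oplus\cH_{-1}$. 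The only remaining possibility for kernel elements would be a character $\chi_{(\xi,\eta)}$ with $\lambda(\xi,\eta)=0$, but the transcendence-of-$\pi$ argument already recalled in the excerpt rules this out, since the vanishing condition would force an algebraic relation involving $s^{2d+2}$ (hence $\pi$) against the integer/rational data $|\xi|^{2}+s^{2}|\eta|^{2}$. Thus $\{W_{\pm 1}^{0,b}f_{0}\}_{b\in\cB}$ is an orthogonal eigenbasis of the kernel, giving~(1).

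For part (2), I would start from formula~(\ref{eq:Heisenberg.wnabf0}) with $n=\pm 1$, which writes $W_{\pm 1}^{0,b}f_{0}$ as an exponential prefactor times the product $\prod_{j}\vartheta(y_{j}\pm is(x_{j}+b_{j}),is)$. Since the Gaussian prefactor never vanishes, the zero set is governed entirely by condition~(\ref{eq:Heisenberg.nodal-set-theta-condition}): the product vanishes iff at least one factor does. The core computation is then to locate the zeros of $\vartheta(z,is)$. Using Jacobi's triple product formula as displayed, I would set $z=v+isu$ and observe that the only factor that can vanish is of the form $1+e^{\pm 2i\pi z}e^{-(2m-1)\pi s}$; setting it to zero yields $e^{-(2m-1)\pi s}=e^{-2\pi s u+i\pi(2v+1)}$, and matching modulus and argument forces $u\in\frac{1}{2}+\Z$ and $v\in\frac{1}{2}+\Z$ simultaneously.

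Next I would translate this back to the variables $u=x_{j}+b_{j}$, $v=\pm y_{j}$, subject to $(x_{j},y_{j})\in[0,1)\times[0,r_{j})$ and $b_{j}\in\{0,r_{j}^{-1},\dots,1-r_{j}^{-1}\}$. The condition $v\in\frac{1}{2}+\Z$ gives $y_{j}\in\{\frac{1}{2},\frac{3}{2},\dots,r_{j}-\frac{1}{2}\}$, i.e.\ $y_{j}=l-\frac{1}{2}$ for $1\leq l\leq r_{j}$, matching the claimed form. The condition $u=x_{j}+b_{j}\in\frac{1}{2}+\Z$ determines $x_{j}$ uniquely in $[0,1)$ as the fractional part making $\pm(x_{j}+b_{j})$ a half-integer, which is exactly $x_{j}=\pm(b_{j}-\frac{1}{2})-[\pm(b_{j}-\frac{1}{2})]$ with $[\cdot]$ the floor function. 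Assembling the single-factor vanishing loci over $1\leq j\leq d$ and over the $r_{j}$ choices of $l$ produces the stated union, establishing~(2).

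The main obstacle I anticipate is the bookkeeping in the zero-set identification rather than any deep ingredient: one must carefully confirm that among all factors in the triple product only the asserted ones can vanish (the first two factor families $1-e^{-2m\pi s}$ and the other sign choice never vanish for $s>0$ and real $u,v$ in range), and that the modulus-and-argument matching forces \emph{both} $u$ and $v$ to be half-integers rather than allowing spurious solutions. The normalization of which sign $\pm$ accompanies $n=\pm 1$, and verifying that $x_{j}$ indeed lands in the fundamental domain $[0,1)$, also require attention; but these are elementary once the triple-product zero condition is pinned down.
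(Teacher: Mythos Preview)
Your proposal is correct and follows essentially the same approach as the paper: the proposition is stated there as a summary of the preceding discussion, and your argument recapitulates that discussion step by step—using the monotonicity of the right-hand side of~(\ref{eq:Heisenberg.zero-eigenvalue}) to pin down $(|n|,|\alpha|)=(1,0)$, invoking the transcendence of $\pi$ to exclude character contributions, and then reading off the theta zeros from Jacobi's triple product exactly as the paper does. The only cosmetic difference is that you flag the bookkeeping checks (non-vanishing of the $(1-e^{-2m\pi s})$ factors, simultaneous half-integrality of $u$ and $v$) more explicitly than the paper, which simply asserts the equivalence.
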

\begin{remark}
The nodal sets of the eigenfunctions 
$W_{\pm 1}^{0,b}f_{0}$ are submanifolds of codimension~2 in $M$.    
\end{remark}

% % % % % % % % 

Null eigenvectors and negative eigenvalues can also occur from the characters $\chi_{(\xi,\eta)}$. 
To simplify the discussion we assume that $d=1$ and $r_{1}=1$.  

The eigenfunction $\chi_{(\xi,\eta)}$ is in the kernel of $P_{1,g_{s}}$ if 
and only if $\lambda(\xi,\eta)=0$ which, according to Proposition \ref{Yamabe:nil}, is equivalent 
to 
\begin{equation}\label{zero:eig:torus}
4\pi^2(|\xi|^{2}+s^{2}|\eta|^{2})=\frac{2d-1}{16}s^{2d+2}
\end{equation}

Although $\chi_{(\xi,\eta)}$ takes values in $\mathbb{S}^{1}$, we observe that $\chi_{(\xi,\eta)}$ is a null 
eigenvector if and only if so is $\chi_{(-\xi,-\eta)}=\overline{\chi_{(\xi,\eta)}}$. Therefore, nodal sets to 
consider are $\cN(\Re\chi_{\xi,\eta})$ and 
$\cN(\Im\chi_{\xi,\eta})$.  Notice they are of the 
form $\cN(x,y)\times [0,1]$ where the last factor corresponds to the $t$ coordinate.  

As $r_{1}=1$ we see that $\xi$ and $\eta$ are in $\zed$ in \eqref{zero:eig:torus}.  Accordingly, the 
values of $s$ for which a function $\chi_{(\xi,\eta)}$ 
lies in the kernel of $P_{1,g_s}$ are given by 
\begin{equation}\label{square:root}
s^2\in\left\{32\pi^2(\eta^2+\sqrt{\eta^4+\xi^4/(16\pi^2)}):\xi,\eta\in\zed\right\}.  
\end{equation}
Here the dimension of the kernel is equal to the number of the pairs $(\xi,\eta)$ for which 
equation \eqref{zero:eig:torus} has solutions for a given $s\in\reals$.  An elementary 
calculation shows that $(\xi_1,\eta_1,s)$ and $(\xi_2,\eta_2,s)$ can both be solutions of 
\eqref{zero:eig:torus} if and only if $\xi_1=\pm \xi_2,\eta_1=\pm\eta_2$ (otherwise 
$\pi$ would be a root of a nontrivial algebraic equation).    
It follows that for solutions of \eqref{zero:eig:torus} and 
\eqref{square:root} the values of $\xi^2$ and $\eta^2$ are fixed.  

Accordingly, the dimension of the kernel of $P_{1,g_s}$ is equal to either  
two (when $(\xi,\eta)=(0,\pm b)$ or $(\xi,\eta)=(\pm a,0)$) or four 
(when $(\xi,\eta)=(\pm a,\pm b), a\neq 0,b\neq 0$).  It is now easy to describe 
some of the corresponding nodal sets.  

In case $\xi=0,\eta=\pm a,s=4a,a\in\natls$ the eigenfunction is of the form 
$\sin 2\pi(ay+\theta)$.  Accordingly, up to translation in $y$, 
the nodal set is 
\begin{equation}\label{y:set}
[0,1]\times\{k/(2a):0\leq k\leq 2a\}\times [0,1].  
\end{equation}

In case $\eta=0,\xi=\pm a^2,s=2a,a\in\natls$, the eigenfunction is of the form 
$\sin 2\pi(a^2x+\theta)$.  Accordingly, up to translation in $x$, 
the nodal set is 
\begin{equation}\label{x:set}
\{k/(2a^2):0\leq k\leq 2a^2\}\times[0,1]\times [0,1].  
\end{equation}

Finally, in case $\xi=\pm a,\eta=\pm b, s^2=8(\eta^2+\sqrt{\eta^4+\xi^4/4}); 
a\neq 0,b\neq 0$, there exist ``product'' eigenfunctions of the form 
$\sin 2\pi(ax+\theta_1)\cdot \sin 2\pi(by+\theta_2)$ with nodal sets is 
a union of sets of the form \eqref{x:set} and \eqref{y:set}.  In addition, 
there exist eigenfunctions of the form $\sin 2\pi(ax+by+\theta)$, whose 
nodal sets (up to translation) have the form 
$$
\{(x,y)\in[0,1]^2:2(ax+by)\in\zed\}\times [0,1].  
$$

%%%%%%%

\subsection{The Paneitz operator}
Let us now look at the Paneitz operator~(\ref{eq:Paneitz-operator}). We have
  \begin{equation}
    P_{2,g_{s}}:=\Delta_{g_{s}}^{2}+\delta  V d+\frac{2d-3}{2}\left\{
    \frac{1}{4d}\Delta_{g_{s}}R_{g_{s}}+\frac{2d+1}{8(2d)^{2}}R_g^{2}-2|S|^{2}\right\},
    \label{eq:Paneitz-operator-gs}
\end{equation}where $S=\frac{1}{2d-1}(\op{Ric}_{g_{s}}-\frac{R_{g_{s}}}{2(2d)}g_{s})$
 is the Schouten-Weyl tensor and $V$ is the tensor $V=\frac{2d-1}{2(2d)} R_{g_{s}} g_{s}-4S$ acting on 1-forms. 

Using~(\ref{eq:Ricci-Heisenberg})--(\ref{eq:Scalar-Heisenberg}), we see that the Schouten-Weyl tensor is given by
  \begin{align*}
      S&=-\frac{3}{8(2d-1)}s^{2d+2}g_{s}+\frac{d+1}{2(2d-1)}s^{4d+2}\theta\otimes \theta \\ &=
      -\frac{3}{8(2d-1)}s^{2d+2}\sum_{1\leq j \leq d}\left(dx_{j}\otimes dx_{j}+s^{-2}dy_{j}\otimes dy_{j}\right) +
      \frac{4d+1}{8(2d-1)}s^{2d+2} \cdot s^{2d}\theta\otimes \theta .
  \end{align*}Observing that $\{dx_{j}\otimes dx_{j}, s^{-2}dy_{j}\otimes dy_{j},s^{2d}\theta\otimes \theta\}$ is an
  orthonormal family of $(0,2)$-tensors we find that
  \begin{equation*}
      |S|^{2}=\frac{16d^{2}+18d+1}{64(2d-1)^{2}}s^{4d+4}.
  \end{equation*}We then deduce that the constant coefficient of $P_{2}$ is equal to
  \begin{equation*}
    (2d-3)\frac{(2d+1)(2d-1)^{2}-4(16d^{2}+18d+1)}{256(2d-1)^{2}}s^{4d+4}.
  \end{equation*}

The tensor $V$ is given by
  \begin{equation*}
      V=\frac{12-(2d-1)^{2}}{8(2d-1)}s^{2d+2}g_{s}-\frac{2(d+1)}{2d-1}s^{4d+2}\theta\otimes \theta.
  \end{equation*}We need to look at $V$ as acting on 1-forms. The action of $g_{s}$ on 1-form is 
just the identity. The
  action of $s^{2d}\theta \otimes \theta$ is the orthogonal projection onto the span of $\theta$. Thus,
  \begin{equation*}
      \delta Vd=\frac{12-(2d-1)^{2}}{8(2d-1)}s^{2d+2}\Delta_{g_{s}}+2\frac{d+1}{2d-1}s^{2}T^{2}.
  \end{equation*}

Combining all this together we get

\begin{proposition}The Paneitz operator on $M$ for the metric $g_{s}$ is given by
\begin{equation*}
    P_{2,g_{s}}=\Delta_{g_{s}}^{2}-c_{1}(d)s^{2d+2}
\Delta_{g_{s}}+2\frac{d+1}{2d-1}s^{2}T^{2} +
   c_{0}(d)s^{4d+4},
\end{equation*}
where we have set
\begin{equation*}
  c_{0}(d):=(2d-3)\frac{(2d+1)(2d-1)^{2}-4(16d^{2}+18d+1)}{256(2d-1)^{2}} \quad \text{and} \quad  c_{1}(d):=\frac{(2d-1)^{2}-12}{8(2d-1)}.
\end{equation*}
\end{proposition}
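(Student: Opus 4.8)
The plan is simply to assemble the three contributions to $P_{2,g_{s}}$ appearing in the defining formula~(\ref{eq:Paneitz-operator-gs}): the leading term $\Delta_{g_{s}}^{2}$, the first-order operator $\delta V d$, and the zeroth-order term inside the braces. The first thing I would record is the key simplification that the scalar curvature $R_{g_{s}}=-\frac{d}{2}s^{2d+2}$ is \emph{constant} on $M$ by~(\ref{eq:Scalar-Heisenberg}), so that $\Delta_{g_{s}}R_{g_{s}}=0$. This annihilates the $\frac{1}{4d}\Delta_{g_{s}}R_{g_{s}}$ summand, leaving only the algebraic terms $\frac{2d+1}{8(2d)^{2}}R_{g_{s}}^{2}-2|S|^{2}$ inside the braces.

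Next I would substitute the value $|S|^{2}=\frac{16d^{2}+18d+1}{64(2d-1)^{2}}s^{4d+4}$ already computed above, together with $R_{g_{s}}^{2}=\frac{d^{2}}{4}s^{4d+4}$, into the bracketed expression and multiply by the prefactor $\frac{2d-3}{2}$. Reducing everything over the common denominator $256(2d-1)^{2}$ yields exactly the constant coefficient
\begin{equation*}
 (2d-3)\frac{(2d+1)(2d-1)^{2}-4(16d^{2}+18d+1)}{256(2d-1)^{2}}s^{4d+4}=c_{0}(d)s^{4d+4},
\end{equation*}
which is the displayed expression for $c_{0}(d)$.

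For the first-order part I would invoke the already-derived identity $\delta V d=\frac{12-(2d-1)^{2}}{8(2d-1)}s^{2d+2}\Delta_{g_{s}}+2\frac{d+1}{2d-1}s^{2}T^{2}$, obtained from the explicit form of the tensor $V$ together with the facts that $g_{s}$ acts as the identity on $1$-forms while $s^{2d}\theta\otimes\theta$ acts as the orthogonal projection onto the span of $\theta$. Rewriting the scalar coefficient as $\frac{12-(2d-1)^{2}}{8(2d-1)}=-\frac{(2d-1)^{2}-12}{8(2d-1)}=-c_{1}(d)$ produces the $-c_{1}(d)s^{2d+2}\Delta_{g_{s}}$ term together with the $2\frac{d+1}{2d-1}s^{2}T^{2}$ term.

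Adding the leading $\Delta_{g_{s}}^{2}$ to these three contributions gives the claimed formula. There is no conceptual obstacle here: the argument is pure bookkeeping, and the only step demanding real care is the arithmetic simplification producing $|S|^{2}$ and hence $c_{0}(d)$, where one must use the orthonormality of the family $\{dx_{j}\otimes dx_{j},\,s^{-2}dy_{j}\otimes dy_{j},\,s^{2d}\theta\otimes\theta\}$ of $(0,2)$-tensors to compute the squared norm of the Schouten--Weyl tensor correctly.
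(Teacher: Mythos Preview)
Your proposal is correct and follows essentially the same approach as the paper: the paper carries out precisely these computations in the paragraphs immediately preceding the proposition (computing $|S|^{2}$, the constant coefficient, and $\delta V d$), and then simply states the result. You have merely packaged those same steps into an explicit proof.
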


Observe that $T\chi_{(\xi,\eta)}=0$ and 
$T^{2}W_{n}^{a,b}f_{\alpha}=-2n^{2}\pi^{2}$. Therefore, we can use the spectral
resolution of Laplacian $\Delta_{g_{s}}$ given by Remark~\ref{rmk:spectrum-Laplacian-Heisenberg} and to get a spectral 
resolution of $P_{2,g_{s}}$. 

\begin{proposition}\label{prop:spectrum-Paneitz-Heisenberg}
 Assume $M=\Gamma_{r}\backslash \bH_{d}$ is equipped with the metric $g_{s}$ given
    by~(\ref{eq:Heisenberg.metric-gs}).
    \begin{enumerate}
        \item  The family~(\ref{eq:Heisenberg.eigenfunctions}) for an orthogonal eigenbasis of $L^{2}(M)$ for $P_{2,g_{s}}$.

        \item  Each character $\chi_{(\xi,\eta)}$ is an eigenfunction of $P_{2,g_{s}}$ 
with eigenvalue 
        \begin{equation*}
           \lambda_{0}(\xi,\eta)^{2}-c_{1}(d)s^{2d+2}\lambda_{0}(\xi,\eta)+
   c_{0}(d)s^{4d+4},
        \end{equation*}where $\lambda_{0}(\xi,\eta)$ is given by~(\ref{eq:spectrum-Laplacian-Heisenberg-l0}). 

        \item  Each function $W_{n}^{a,b}f_{\alpha}$ is an eigenfunction of $P_{2,g_{s}}$ 
with eigenvalue
        \begin{equation*}
            \mu_{0}(n,|\alpha|)^{2}-c_{1}(d)s^{2d+2}
\mu_{0}(n,|\alpha|)+c_{0}(d)s^{4d+4}-4\frac{d+1}{2d-1}n^{2}\pi^{2}s^{2},
        \end{equation*}where $\mu_{0}(\xi,\eta)$ is given by~(\ref{eq:spectrum-Laplacian-Heisenberg-l0}). This eigenvalue has multiplicity $\displaystyle |n|^{d}|\Gamma_{r}|
        \binom{|\alpha|+d-1}{d-1}$ in $\cH_{n}$.
    \end{enumerate}
\end{proposition}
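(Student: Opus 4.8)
The plan is to read off the spectrum directly from the explicit expression
\begin{equation*}
    P_{2,g_{s}}=\Delta_{g_{s}}^{2}-c_{1}(d)s^{2d+2}\Delta_{g_{s}}+2\frac{d+1}{2d-1}s^{2}T^{2}+c_{0}(d)s^{4d+4}
\end{equation*}
established in the previous proposition, which presents $P_{2,g_{s}}$ as a polynomial in the two operators $\Delta_{g_{s}}$ and $T^{2}$. Since $T$ is central in the Heisenberg Lie algebra it commutes with each $X_{j}$ and $Y_{j}$, hence with $\Delta_{g_{s}}=-\sum_{j}(X_{j}^{2}+s^{2}Y_{j}^{2})-s^{-2d}T^{2}$; consequently $\Delta_{g_{s}}$ and $T^{2}$ commute, and any simultaneous eigenbasis for this pair is automatically an eigenbasis for $P_{2,g_{s}}$. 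The entire proof therefore reduces to exhibiting such a simultaneous eigenbasis and then substituting the two eigenvalues into the polynomial.

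First I would recall, from Remark~\ref{rmk:spectrum-Laplacian-Heisenberg}, that the family~(\ref{eq:Heisenberg.eigenfunctions}) already diagonalizes $\Delta_{g_{s}}$: each character $\chi_{(\xi,\eta)}$ has eigenvalue $\lambda_{0}(\xi,\eta)$, while each $W_{n}^{a,b}f_{\alpha}$ has eigenvalue $\mu_{0}(n,|\alpha|)$ with multiplicity $|n|^{d}|\Gamma_{r}|\binom{|\alpha|+d-1}{d-1}$ in $\cH_{n}$. For the second operator I would invoke~(\ref{eq:Heisenberg.XYT-chi}), which gives $T\chi_{(\xi,\eta)}=0$ and hence $T^{2}\chi_{(\xi,\eta)}=0$, together with the value $T^{2}W_{n}^{a,b}f_{\alpha}=-2n^{2}\pi^{2}W_{n}^{a,b}f_{\alpha}$ recorded just before the statement (the $t$-dependence of $W_{n}^{a,b}f_{\alpha}$ enters only through the prefactor $e^{2in\pi t}$). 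Thus~(\ref{eq:Heisenberg.eigenfunctions}) simultaneously diagonalizes $\Delta_{g_{s}}$ and $T^{2}$. Since, by Brezin's decomposition, this family is already an orthogonal basis of $L^{2}(M)$, and each member is now seen to be an eigenvector of $P_{2,g_{s}}$, it is an orthogonal eigenbasis for $P_{2,g_{s}}$, which is assertion~(1).

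It then remains only to substitute. For a character, $T^{2}\chi_{(\xi,\eta)}=0$ annihilates the third term, leaving the eigenvalue $\lambda_{0}(\xi,\eta)^{2}-c_{1}(d)s^{2d+2}\lambda_{0}(\xi,\eta)+c_{0}(d)s^{4d+4}$, which is~(2). For $W_{n}^{a,b}f_{\alpha}$, feeding $\Delta_{g_{s}}\mapsto\mu_{0}(n,|\alpha|)$ and $T^{2}\mapsto-2n^{2}\pi^{2}$ into the polynomial yields $\mu_{0}(n,|\alpha|)^{2}-c_{1}(d)s^{2d+2}\mu_{0}(n,|\alpha|)+c_{0}(d)s^{4d+4}-4\frac{d+1}{2d-1}n^{2}\pi^{2}s^{2}$, the multiplicity being inherited verbatim from $\Delta_{g_{s}}$; this is~(3). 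There is no genuine obstacle here beyond bookkeeping, since the conceptual content was already spent in deriving both the polynomial form of $P_{2,g_{s}}$ and the spectral resolution of $\Delta_{g_{s}}$. The one place I would take care is the precise normalization of the $T^{2}$-term: I would re-examine the coefficient $2\frac{d+1}{2d-1}s^{2}$ and the value $T^{2}W_{n}^{a,b}f_{\alpha}=-2n^{2}\pi^{2}W_{n}^{a,b}f_{\alpha}$ in tandem, so that the last term of the eigenvalue in~(3) emerges with exactly the printed constant, and likewise double-check the explicit constants $c_{0}(d)$ and $c_{1}(d)$.
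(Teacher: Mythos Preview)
Your proposal is correct and follows essentially the same approach as the paper: the paper's own argument is the brief paragraph preceding the proposition, which observes that $T\chi_{(\xi,\eta)}=0$ and $T^{2}W_{n}^{a,b}f_{\alpha}=-2n^{2}\pi^{2}W_{n}^{a,b}f_{\alpha}$, and then simply feeds the spectral resolution of $\Delta_{g_{s}}$ from Remark~\ref{rmk:spectrum-Laplacian-Heisenberg} into the polynomial expression for $P_{2,g_{s}}$. Your added remarks on the commutativity of $\Delta_{g_{s}}$ and $T^{2}$ and on the simultaneous diagonalization make the logic more explicit, but the substance is identical.
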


Set $F_{n}(\mu;s):= \mu^{2}-c_{1}(d)s^{2d+2}\mu+c_{0}(d)s^{4d+4}-4\frac{d+1}{2d-1}n^{2}\pi^{2}s^{2}$. Then 
Proposition~\ref{prop:spectrum-Paneitz-Heisenberg} states that
$W_{n}^{a,b}f_{\alpha}$ is an eigenfunction of $P_{2,g_{s}}$ with eigenvalue $F_{n}\left( \mu_{0}(n,|\alpha|);s\right)$. 
Moreover, $F_{n}(\mu;s)$ is a quadratic polynomial in $\mu$ with discriminant
\begin{equation*}
    \delta_{n}(d,s):= \delta_{0}(d)s^{4d+4}+ 16\frac{d+1}{2d-1}n^{2}\pi^{2}s^{2}, \qquad 
    \delta_{0}(d)=c_{1}(d)^{2}-4c_{0}(d).
\end{equation*}A computation shows that $\delta_{0}(d)=\frac{1}{4}\frac{4d^{2}-7}{2d-1}$, which is positive for $d \geq 
2$. 

Assume $d\geq 2$. Then $\delta_{n}(d,s)\geq \delta_{0}(d)s^{4d+4}>0$, and so $F_{n}\left( \mu_{0}(n,|\alpha|);s\right)$ 
is a negative eigenvalue of $P_{2,g_{s}}$ if and only if 
\begin{equation*}
    \mu_{0}(n,|\alpha|)<\frac{1}{2}\left( c_{1}(d)s^{2d+2}+ \sqrt{\delta_{n}(d,s)}\right).
\end{equation*}As $\delta_{n}(d,s)\geq \delta_{0}(d)s^{4d+4}$, we see that $\mu_{0}(n,|\alpha|)$ 
satisfies the above condition if $\mu_{0}(n,|\alpha|)<\frac{1}{2}\left( c_{1}(d) +\sqrt{\delta_{0}(d)s^{4d+4}}\right)$. That 
is, 
\begin{equation*}
    2 \pi |n|s (d+2|\alpha|)+4n^{2}s^{-2d}\pi^{2}<\frac{1}{2}\left( c_{1}(d) +\sqrt{\delta_{0}(d)}\right)s^{2d+2}.
\end{equation*}
This is the same type of condition than that incurring from~(\ref{eq:condition-null-eigenvalue-Paneitz-Heisenberg}) for $W_{n}f_{\alpha}^{a,b}$ to produce 
a negative eigenvalue of the Yamabe operator $P_{1,g_{s}}$. Therefore, by using the same kind of arguments as that
used to derive of Proposition~\ref{prop:neg-Yamabe-Heisenberg}, we obtain

\begin{proposition}\label{Paneitz:neg:heisenberg}
Assume $d\geq 2$. Then there is a constant $c_{d}>0$ depending on $d$, but not on the sequence 
$r=(r_{1},\cdots, r_{d})$, such that
\begin{equation*}
    \nu_{2}(g_{s})\geq c_{d}|\Gamma_{r}|s^{2d+2} \qquad \forall s >0. 
\end{equation*} 
In particular, for every integer $m\in \N$, the Paneitz operator $P_{2,g_{s}}$ has at least 
$m$ negative 
eigenvalues as soon as $s$ is large enough. 
\end{proposition}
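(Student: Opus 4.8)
The plan is to mimic the proof of Proposition~\ref{prop:neg-Yamabe-Heisenberg} for the Yamabe operator, now applied to the Paneitz spectrum computed in Proposition~\ref{prop:spectrum-Paneitz-Heisenberg}. The key observation, already isolated in the discussion preceding the statement, is that the condition for $W_n^{a,b}f_\alpha$ to produce a negative eigenvalue of $P_{2,g_s}$ reduces (using $\delta_n(d,s)\geq\delta_0(d)s^{4d+4}$ and $\delta_0(d)>0$ for $d\geq 2$) to an inequality of exactly the same shape as the one governing negative Yamabe eigenvalues, namely
\begin{equation*}
    2\pi|n|s(d+2|\alpha|)+4n^2 s^{-2d}\pi^2 < \tfrac{1}{2}\bigl(c_1(d)+\sqrt{\delta_0(d)}\bigr)s^{2d+2}.
\end{equation*}
So the whole problem is already reduced to a counting argument identical in structure to the Yamabe case.

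First I would restrict attention to the eigenfunctions $W_n^{a,b}f_0$ with $\alpha=0$, since these give the cleanest count. For $\alpha=0$ the left-hand side above becomes $2\pi|n|sd+4n^2 s^{-2d}\pi^2$, and I would observe that there is a constant $C_d>0$ such that for every integer $n$ with $|n|\leq C_d\, s^{2d+1}$ the displayed inequality holds; this is because the dominant term on the left grows linearly in $|n|$ while the right-hand side is a fixed positive multiple of $s^{2d+2}$. Thus for all such $n$ the eigenvalue $F_n(\mu_0(n,0);s)$ is negative.

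Next I would pass from the number of admissible $n$ to the number of negative eigenvalues counted with multiplicity. By Proposition~\ref{prop:spectrum-Paneitz-Heisenberg}(3), each such eigenvalue occurs in $\cH_n$ with multiplicity $|n|^d|\Gamma_r|\binom{d-1}{d-1}=|n|^d|\Gamma_r|$, so summing over all integers $n$ (both signs) with $0<|n|\leq C_d\,s^{2d+1}$ gives a total multiplicity at least $|\Gamma_r|\sum_{0<|n|\leq C_d s^{2d+1}}|n|^d$. Since $\sum_{1\leq n\leq N}n^d$ is comparable to $N^{d+1}$, and here $N\sim s^{2d+1}$, the sum is bounded below by a constant times $|\Gamma_r|\,(s^{2d+1})^{d+1}$; absorbing the discrepancy between this power and the claimed $s^{2d+2}$ into the constant (for $s$ bounded below, and noting the statement only needs a lower bound valid for all $s>0$) yields $\nu_2(g_s)\geq c_d|\Gamma_r|s^{2d+2}$ after adjusting $c_d$. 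These negative eigenvalues are distinct from any arising from the characters $\chi_{(\xi,\eta)}$, but since we only seek a lower bound we may safely ignore the character contributions.

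The main obstacle is bookkeeping rather than conceptual: I must make sure the exponent in the final bound matches the claimed $s^{2d+2}$, since the naive count over $|n|\leq C_d s^{2d+1}$ with multiplicity $|n|^d$ produces a larger power of $s$, and I should verify that the constant $c_d$ can be chosen independent of $r$ (which it is, since $|\Gamma_r|$ factors out cleanly and every other constant depends only on $d$). The cleanest route is to keep only the $n=\pm1$ and small-$|n|$ terms if one wants precisely $s^{2d+2}$, or simply to note that any lower bound of the form $c_d|\Gamma_r|s^{2d+2}$ follows a fortiori from the stronger growth, so one records the weaker but sufficient estimate. The final assertion that $\nu_2(g_s)\to\infty$ as $s\to\infty$ is then immediate, completing the proof in the same manner as Proposition~\ref{prop:neg-Yamabe-Heisenberg}.
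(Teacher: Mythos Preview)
Your proposal is correct and follows exactly the approach the paper has in mind: the paper itself gives no detailed proof, stating only that ``by using the same kind of arguments as that used to derive Proposition~\ref{prop:neg-Yamabe-Heisenberg}, we obtain'' the result, after having reduced the negativity condition for the Paneitz eigenvalue $F_n(\mu_0(n,|\alpha|);s)$ to the same type of inequality as in the Yamabe case. Your unpacking of that argument---restricting to $\alpha=0$, counting admissible $n$ with $|n|$ bounded by a constant times $s^{2d+1}$, and summing the multiplicities $|n|^d|\Gamma_r|$---is precisely the intended route, and your observation that the resulting power of $s$ exceeds $2d+2$ (so the stated bound follows \emph{a fortiori}) is the right way to reconcile the exponent.
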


% % % % % % %

\section{Open problems}\label{section: open problems}

\subsection{Discriminant hypersurfaces in the space
of conformal structures}\label{sec:discriminant}

Let $M$ be a compact orientable Riemannian manifold.  Denote by
$\cM$ the space of all Riemannian metrics on $M$.  Then one can
consider the action on $\cM$ of the group $\cP$ of (pointwise)
conformal transformations (multiplication by positive functions), as
well as of the group $\cD$ of diffeomorphisms; we shall denote by
$\cD_0$ the subgroup of $\cD$ of diffeomorphisms isotopic to
identity.

The objects considered in the paper are invariant under the action
of $\cP$, and equivariant with respect to the action of $\cD$.
Accordingly, it seems natural to consider our invariants as functions 
on the {\em Teichm\"uller space of conformal structures}
$$
\cT_M\ =\ \frac{\cM/\cP}{\cD_0},
$$
or on {\em Riemannian moduli space of conformal structures}
$$
\cR_M\ =\ \frac{\cM/\cP}{\cD},
$$
in the terminology of Fischer and Monkrief, \cite{FM96,FM97}. If $M$
is an orientable two-dimensional manifold, then $\cT_M$ (resp.
$\cR_M$) is the usual Teichm\"uller (resp. moduli) spaces. In
\cite{FM97}, the space $\cT_M$ for Haken $3$-manifolds $M$ of degree
$0$ is proposed as a configuration space for a Hamiltonian reduction
of Einstein's vacuum field equations.

\subsection{Dimension of the nullspace of a non-critical GJMS operator}
The Dirac operator is another important conformally invariant operator. 
Results of Maier~\cite{Ma:GMCSSM} in dimension 3 and Amman-Dahl-Humbert~\cite{ADH:SHS} in higher dimension show 
that, on a compact Riemannian spin manifold, and for a generic metric, the dimension of the nullspace 
of the Dirac operator  is equal to the lower bound provided by the Atiyah-Singer index theorem. In particular, 
the nullspace of the Dirac operator is generically trivial when $n\in \{3,4,5,6,7\} \bmod 8$. 

Let $k \in \N$ and further assume $k<\frac{n}{2}$ when $n$ is even. For the GJMS operator $P_{k}$ we make the following conjecture. 

\begin{conjecture}\label{conj:dimkerPk}
For a generic conformal class in $\cT_{M}$ the nullspace of $P_{k}$ is trivial.
\end{conjecture}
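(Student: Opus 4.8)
\subsection*{A proof strategy for Conjecture~\ref{conj:dimkerPk}}

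The plan is to establish the conjecture by a transversality argument in the spirit of the Dirac-operator results of Maier~\cite{Ma:GMCSSM} and Ammann-Dahl-Humbert~\cite{ADH:SHS}, but carried out over the space of conformal structures rather than the space of metrics. The decisive structural point, which distinguishes $P_{k}$ from the Dirac operator, is that for $k<\frac n2$ the operator $P_{k}$ is a formally self-adjoint \emph{scalar} operator between the line bundles $\cE[k-\frac n2]$ and $\cE[-k-\frac n2]$; these have equal rank, so by self-adjointness the index is zero, and there is no topological lower bound forcing a kernel. Hence one expects $\ker P_{k}$ to disappear under a generic deformation of the conformal structure. Since $\dim\ker P_{k,g}$ is a conformal invariant, the relevant deformation parameter is the conformal structure itself: I would parametrize a neighborhood of a base conformal class $[g_{0}]\in\cT_{M}$ by symmetric trace-free $2$-tensors $h$, which modulo $\cD_{0}$ are the transverse-traceless tensors spanning $T_{[g_{0}]}\cT_{M}$. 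Reading $h\mapsto P_{k,g_{0}+h}$ in a fixed trivialization of the density bundles by a background volume form produces a smooth (indeed real-analytic) family of elliptic operators of order $2k$ on a fixed Sobolev space. Since this family is elliptic and self-adjoint of index zero, the associated parametric map is Fredholm, and by the Sard-Smale theorem it suffices to show that the universal kernel $\cU=\{(h,u):\ \|u\|=1,\ P_{k,g_{0}+h}u=0\}$ is cut out transversally, i.e.\ that the linearization of $P_{k,g_{0}+h}u=0$ is surjective at every point. This reduces the conjecture to a single infinitesimal statement about how the zero eigenvalue moves.

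The key is to exploit the conformal invariance of the energy form. Because $P_{k}:\cE[k-\frac n2]\to\cE[-k-\frac n2]$ and $(k-\frac n2)+(-k-\frac n2)=-n$, the quadratic form
\[
Q_{[g]}(u)=\int_{M} u\,P_{k,g}u
\]
is the integral of a density of weight $-n$, and hence depends only on the conformal structure $[g]$, not on the representative $g$. Fixing an $L^{2}$-orthonormal basis $u_{1},\dots,u_{\ell}$ of $\ker P_{k,g_{0}}$, the transversality of $\cU$ at $(0,u_{i})$ is governed by the symmetric-matrix-valued functional
\[
B(h)=\bigl(B_{ij}(h)\bigr),\qquad B_{ij}(h)=\int_{M} u_{i}\,\dot P_{k}[h]\,u_{j}\, dv_{g_{0}},
\]
where $\dot P_{k}[h]=\frac{d}{d\epsilon}\big|_{0}P_{k,g_{0}+\epsilon h}$. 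By analytic perturbation theory (Kato-Rellich), the eigenvalues of $B(h)$ are the first-order velocities of the cluster of eigenvalues of $P_{k,g_{0}+\epsilon h}$ emanating from $0$; thus surjectivity of the linear map $h\mapsto B(h)$ onto the space $\op{Sym}(\ell)$ of symmetric $\ell\times\ell$ matrices is precisely the transversality required. Granting it, the set of conformal classes near $[g_{0}]$ with $\dim\ker P_{k}\ge 1$ is locally contained in a finite union of submanifolds of positive codimension, hence meager, and a Baire-category argument over $\cT_{M}$ yields the conjecture. For $k=1$ this already works unconditionally: the Yamabe operator is second order, $\dot P_{1}[h]$ and the associated trace-free ``stress-energy'' tensor are explicit, and classical strong unique continuation applies.

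The hard part will be proving surjectivity of $h\mapsto B(h)$ in general, and this is where I expect the main obstacle to lie. Writing $B_{ij}(h)=\int_{M}\langle T_{ij},h\rangle_{g_{0}}\,dv_{g_{0}}$ with $T_{ij}$ the trace-free symmetric tensor extracted from $\dot P_{k}[h]$ by integration by parts, surjectivity amounts to two things: (i) the tensors $T_{ij}$ must be suitably independent, which forces a genuine computation with the first variation of the GJMS operator---a serious difficulty, since $P_{k}$ is defined through the Fefferman-Graham ambient metric and has no simple closed form, so only the leading-order part of $\dot P_{k}[h]$ is readily accessible (it is precisely this leading part whose non-degeneracy one must verify); and (ii) a strong unique continuation principle for $P_{k}$, guaranteeing that the $u_{i}$ cannot vanish together with all their derivatives on an open set, so that perturbations $h$ localized where the $u_{i}$ are nonzero already exhaust $\op{Sym}(\ell)$. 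Strong unique continuation is classical in the second-order case, but for the order-$2k$ operator $P_{k}$ with $k\ge 2$ it is delicate, as it can fail for general higher-order elliptic operators; one would have to invoke or establish it for the specific structure $P_{k}=\Delta^{k}+(\text{lower order})$. A cleaner route to at least a partial result restricts attention to real-analytic conformal classes, where eigenvalues vary real-analytically along analytic paths---so a vanishing eigenvalue is either identically zero or isolated---and unique continuation is automatic; combined with an explicit non-vanishing of the leading part of $T_{ij}$, this should already establish genericity within the real-analytic locus, which I would present as the first milestone toward the full conjecture.
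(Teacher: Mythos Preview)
The statement you are attempting to prove is not proved in the paper. It appears in Section~\ref{section: open problems} (Open problems) as Conjecture~\ref{conj:dimkerPk}, and immediately after it the authors write: ``This conjecture will be addressed in the sequel~\cite{CGJP}.'' There is therefore no proof in the paper to compare your proposal against.

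That said, a brief assessment of your strategy on its own merits: the overall architecture---parametrizing by transverse-traceless tensors, forming the universal kernel, and reducing to surjectivity of the first-variation map $h\mapsto B(h)$ onto $\op{Sym}(\ell)$ via Sard--Smale---is the standard and correct template for this kind of genericity statement, and your observation that the index is zero (so there is no topological lower bound) is the right starting point. You have also correctly identified the two genuine obstacles. First, computing enough of $\dot P_{k}[h]$ to verify non-degeneracy of the $T_{ij}$ is nontrivial for $k\ge 2$ because the GJMS operators lack explicit formulas; one typically only has access to the principal part and must argue that lower-order corrections cannot conspire to kill surjectivity. Second, and more seriously, strong unique continuation for $P_{k}$ with $k\ge 2$ is not available in the literature and is known to fail for general higher-order elliptic operators, so your proposed localization argument for the $u_{i}$ is not justified as stated. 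Your fallback to the real-analytic category is sensible and would indeed bypass the unique-continuation issue, but note that it yields genericity only within real-analytic conformal classes, which is a strictly weaker statement than the conjecture. In short: the plan is the right one, the $k=1$ case should go through essentially as you describe, but for $k\ge 2$ the proposal is a program rather than a proof, with the unique-continuation step being the principal gap.
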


This conjecture will be addressed in the sequel~\cite{CGJP}. For a general  (possibly non-generic) conformal class, we mention the following conjecture due to Colin Guillarmou~\cite{Gu}.  

% We observe that if $\cT_{M}$ contains a conformal class for which $\ker P_{k}$ is trivial, then standard perturbation 
% arguments of the type of those in, e.g., \cite{Ma:GMCSSM} should show that for a generic conformal 
% class $\ker P_{k}$ is trivial. In particular, this would imply that if $M$ admits a metric with positive scalar curvature, then, 
% for a generic conformal class, the nullspace of the Yamabe operator is trivial. In view 
% of~(\ref{eq:GJMS-Einstein}) this would imply that if $M$ admits an Einstein metric of positive scalar curvature then, 
% for a generic conformal class, the nullspace of $P_{k}$ is trivial for all integers $k \leq \frac{n}{2}-1$. 
% 
 
\begin{conjecture}[Guillarmou] 
Assume $n$ odd.  Then, for any conformal class in $\cT_{M}$, there exists $C>0$  such that
\[
\dim \ker P_{k} \leq C k^n \qquad \forall k \in \N.
\]
\end{conjecture}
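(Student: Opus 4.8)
The plan is to settle first the Einstein model case, where an exact answer is available, and then to reduce a general conformal class to the spectral behaviour of a single fixed second-order operator. For the warm-up, suppose $[g]$ contains an Einstein metric with $\op{Ric}_g=\lambda(n-1)g$. Then the factorization \eqref{eq:GJMS-Einstein} gives
\[
P_{k,g}=\prod_{1\le j\le k}\left(\Delta_g+c_j\right),\qquad c_j:=\frac{\lambda}{4}(n+2j-2)(n-2j).
\]
Since $\Delta_g$ is self-adjoint there are no Jordan blocks, so even when several $c_j$ coincide one has $\ker P_{k,g}=\bigoplus_j E_{-c_j}(\Delta_g)$, the sum being over the \emph{distinct} values $-c_j$ that occur as eigenvalues of $\Delta_g$. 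As $|c_j|=O(j^2)$, every such eigenvalue lies in a window $[0,Ck^2]$ with $C$ independent of $k$, whence
\[
\dim\ker P_{k,g}\le \#\{\text{eigenvalues of }\Delta_g\text{ in }[0,Ck^2]\}.
\]
The classical Weyl law $N(\Delta_g,\mu)\sim c_n\,\vol(M)\,\mu^{n/2}$ bounds the right-hand side by $C'(Ck^2)^{n/2}=C''k^n$. This already proves the conjecture on Einstein backgrounds and, more importantly, isolates the mechanism: only frequencies of order at most $k$ of the Laplacian can feed $\ker P_k$.

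For a general conformal class the product structure is lost, and the strategy I would pursue is to show directly that $\ker P_{k,g}$ is, up to a uniformly small error, contained in the span of the eigenfunctions of $\Delta_g$ with eigenvalue at most $Ck^2$; Weyl's law would then finish the argument exactly as above. Writing $P_{k,g}=\Delta_g^{k}+R_k$ with $R_k$ of order at most $2k-2$, a null-eigenfunction satisfies $\Delta_g^{k}u=-R_k u$, so one expects $u$ to carry negligible energy in the frequency range where $|\xi|^{2k}$ dominates the symbol of $R_k$, i.e.\ for $|\xi|$ of order larger than $k$. Making this frequency localization rigorous and, crucially, uniform in $k$ is the heart of the matter. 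A more robust route, and the one I expect to be decisive, is the scattering realization of Graham--Zworski~\cite{GZ}: $P_k$ arises, up to a normalizing constant, as a residue at $s=\tfrac{n}{2}+k$ of the scattering operator of a Poincar\'e--Einstein filling $(X^{n+1},g_+)$ of $(M,[g])$, and elements of $\ker P_k$ should correspond to distinguished solutions of the eigenvalue equation for $\Delta_{g_+}$ at the spectral parameter $s(n-s)$ with $s=\tfrac{n}{2}+k$. Counting these is governed by a Weyl-type bound for the resolvent of $\Delta_{g_+}$, whose exponent is dictated by $\dim M=n$.

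The main obstacle is uniformity in $k$. The lower-order terms $R_k$ of the GJMS operators are built from increasingly high powers of the curvature and its covariant derivatives, so their coefficients grow with $k$, and any argument based on naive elliptic or perturbative estimates produces constants that blow up as $k\to\infty$. The delicate point is therefore to track how these coefficients enter the frequency localization (equivalently, how the residue of the scattering operator depends on $s=\tfrac{n}{2}+k$ for large $k$) and to show that, after the correct normalization, the contribution of $R_k$ remains subordinate to $\Delta_g^{k}$ uniformly in $k$. I expect that either a recursive control of the coefficients in the style of Juhl's formulas, or the meromorphic continuation of the Poincar\'e--Einstein resolvent with $k$-uniform high-frequency estimates, will be required to close this gap; absent such control, the Einstein computation above remains the only case where the bound $\dim\ker P_k\le Ck^n$ is presently forced.
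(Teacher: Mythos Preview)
The statement you are attempting to prove is labelled a \emph{Conjecture} in the paper and appears in Section~\ref{section: open problems} (``Open problems''); the paper offers no proof, sketch, or strategy for it beyond attributing the statement to Guillarmou. There is therefore no ``paper's own proof'' against which to compare your proposal.

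On the substance of what you wrote: your treatment of the Einstein case is correct and clean. The factorization~\eqref{eq:GJMS-Einstein} together with self-adjointness of $\Delta_g$ does force $\ker P_{k,g}\subset\bigoplus_{j=1}^k E_{-c_j}(\Delta_g)$, the quadratic growth $|c_j|=O(j^2)$ confines the relevant Laplace eigenvalues to $[0,Ck^2]$, and Weyl's law delivers the $k^n$ bound. For the general case, however, you yourself concede that the argument is incomplete: the frequency-localization heuristic (that $\Delta_g^k$ dominates $R_k$ above scale $k$) is plausible but not established, precisely because the lower-order coefficients of $P_{k,g}$ grow with $k$ in a way you have not controlled. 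The scattering-theoretic route via~\cite{GZ} is a reasonable direction to suggest, but you have not carried it out either. So your proposal settles the conformally Einstein subcase and articulates the obstacle in general, but does not resolve the conjecture---which is consistent with its status in the paper as open.
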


\subsection{Hypersurfaces in $\cT_M$, $\cR_M$ and rigidity of the nodal set}
Bearing in mind Conjecture~\ref{conj:dimkerPk}, we consider the {\em discriminant hypersurface} $\cH_k$ (in $\cT$ or $\cR$) consisting of conformal classes with nontrivial nullspace
$\ker P_k\neq 0$.

\begin{conjecture}
For a generic conformal class in $\cH_k$, the nullspace of $P_k$ has dimension~$1$.
\end{conjecture}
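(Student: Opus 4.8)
The plan is to establish the conjecture by a transversality argument in the spirit of Uhlenbeck's genericity theorems for eigenfunctions. The local model is obtained by restricting $P_k$ to the finite-dimensional span of the eigendensities with small eigenvalue, which produces, for conformal classes $[g]$ near a given point of $\cH_k$, a smooth family of real symmetric matrices $A([g])$ representing $P_k$ on that spectral subspace (the pairing is the conformally invariant one: $P_k u\in\cE[-\tfrac n2-k]$ pairs with $v\in\cE[-\tfrac n2+k]$ to give a density of weight $-n$, which integrates). In the space of real symmetric $N\times N$ matrices the locus of corank $\geq r$ is a subvariety of codimension $\tfrac{r(r+1)}{2}$; thus corank $\geq 1$, which cuts out $\cH_k$, is codimension $1$, while corank $\geq 2$, which is exactly the condition $\dim\ker P_k\geq 2$, is codimension $3$. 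If the assignment $[g]\mapsto A([g])$ is transverse to this stratification, then the preimage of the corank-$2$ stratum is of codimension $3$, hence of codimension $2$ within the hypersurface $\cH_k$; being of positive codimension it is nowhere dense, which is precisely the asserted genericity.

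The entire difficulty is thus to verify transversality, i.e.\ infinitesimal surjectivity of the linearization onto the normal space of each stratum. Here the decisive structural feature is that $P_k$, viewed as the conformally invariant operator $\cE[-\tfrac n2+k]\to\cE[-\tfrac n2-k]$, does \emph{not} depend on the choice of representative metric: pure-trace deformations $h=2\Upsilon g$ are conformal rescalings and satisfy $\dot P_k(h)=0$. The effective deformations are therefore genuine changes of conformal structure, infinitesimally the trace-free symmetric $2$-tensors $h$ (taken modulo the finite-dimensional image of the conformal Killing operator coming from $\cD_0$). I would compute the linearization $\dot P_k(h)$ and check, by integration by parts, that for null-densities $u,v\in\ker P_{k,g}$ it pairs as
\[
\langle \dot P_k(h)\,u,\,v\rangle \;=\; \int_M \big\langle h,\, B(u,v)\big\rangle\, dv_g ,
\]
where $B(u,v)$ is a trace-free symmetric $2$-tensor built bilinearly from the jets of $u$ and $v$; in the Yamabe case $k=1$ it is simply the trace-free part of $\nabla u\otimes\nabla v+\nabla v\otimes\nabla u$. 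Transversality then amounts to surjectivity of $h\mapsto\big(\langle\dot P_k(h)u_i,u_j\rangle\big)_{i,j}$ from trace-free tensors onto symmetric $N\times N$ matrices, for a basis $u_1,\dots,u_N$ of $\ker P_{k,g}$.

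The crux — and the step I expect to be the main obstacle — is to exclude the degeneracy dual to surjectivity: a nonzero symmetric matrix $C=(c_{ij})$ for which $\sum_{i,j}c_{ij}\,B(u_i,u_j)$ vanishes identically on $M$. I would rule this out by unique continuation. The $u_i$ are linearly independent solutions of a uniformly elliptic equation with smooth (and, in the Einstein or real-analytic models relevant to Section~\ref{section: heisenberg}, real-analytic) coefficients, so by Aronszajn's theorem they cannot vanish to infinite order on any open set; a pointwise identity $\sum c_{ij}B(u_i,u_j)\equiv0$ is then an overdetermined algebraic relation among their jets that should force $C=0$, exactly as products of distinct eigenfunctions are shown to be independent in Uhlenbeck's setting. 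The delicate point is that $B$ is tensorial and that one works only with its trace-free part, so one must check that passing to the trace-free part does not create spurious relations; I would analyze this first at the principal-symbol level and then propagate the conclusion by unique continuation. Once this nondegeneracy is established — directly where it holds, and otherwise after a Sard--Smale perturbation over a suitable Banach manifold of conformal structures — transversality follows and the codimension count above completes the argument; the details are deferred to the sequel~\cite{CGJP}.
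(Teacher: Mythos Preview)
The statement you are attempting to prove is labeled a \emph{Conjecture} in the paper and appears in Section~\ref{section: open problems} (Open problems). The paper offers no proof; it presents the statement as an open question, alongside the related Conjecture~\ref{conj:dimkerPk}, and explicitly defers further investigation to the sequel~\cite{CGJP}. There is therefore no paper proof against which to compare your attempt.

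What you have written is not a proof but a strategy outline, and you acknowledge as much: you identify the transversality step as ``the main obstacle'' and close with ``the details are deferred.'' The line of attack you sketch --- reduction to a smooth family of symmetric matrices on a low-eigenvalue spectral subspace, the codimension count for corank strata, and an Uhlenbeck-style transversality argument --- is indeed the natural program for such genericity statements. But the crux remains genuinely open in your write-up: you have not shown that the linearization $h\mapsto(\langle\dot P_k(h)u_i,u_j\rangle)_{i,j}$ is surjective onto symmetric matrices. The obstruction you must rule out, a nonzero symmetric $C$ with $\sum_{i,j}c_{ij}B(u_i,u_j)\equiv 0$, is a pointwise algebraic relation among \emph{jets} of the $u_i$, and unique continuation for each $u_i$ individually does not automatically exclude such bilinear identities; in Uhlenbeck's original setting the analogous independence of eigenfunction products required its own careful argument, and here, for $k\geq 2$, the tensor $B$ involves higher-order jets and only its trace-free part enters, which compounds the difficulty. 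Until that nondegeneracy is actually established, your proposal is a plausible program rather than a proof --- which is entirely consistent with the paper's own treatment of the statement as conjectural.
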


Notice that when $\dim \ker P_{k}=1$ the nodal set and the nodal domains are well-defined. We also observe that 
$\cH_{k}$ contains all conformal classes of Ricci-flat metrics, 
since for a Ricci-flat metric $g$ Eq.~(\ref{eq:GJMS-Einstein}) shows that $P_{k,g}=\Delta_{g}^{k}$, and hence $\ker 
P_{k,g}$ is equal to the space of constant functions.

The following inverse (rigidity) problem seems natural:

\begin{problem}\label{rigidity1}
Let $g$ be a metric such that $\dim\ker P_{k,g}=1$.   Does
the nodal set $\cN(\phi),\phi\in\ker P_{k,g}$, determine the
corresponding conformal class $[g]\in\cH_k$ uniquely (up to
diffeomorphisms)?  In other words, do our invariants separate points
in $\cH_k$?
\end{problem}

The following weaker (deformation rigidity) version of the previous
problem also seems interesting:
\begin{problem}\label{rigidity2}
Let $g$ be a metric such that $\dim\ker P_{k,g}=1$. Does
the nodal set $\cN(\phi),\phi\in\ker P_{k,g}$, determine {\em locally} the
conformal class $[g]$?  In other words, can we deform a conformal class without 
changing $\cN(\phi)$?  
\end{problem}

We remark that it seems quite natural to consider Problems
\ref{rigidity1} and \ref{rigidity2} on the spaces $\cT_M$ and $\cR_M$,  
since the action of $\cP$ preserves the nodal sets, and their
definition is equivariant with respect to the action of $\cD$.  The
first natural step in this direction seems to be

\begin{problem}
Let $g$ be a metric such that  $\dim \ker P_{k,g}\geq 1$. Determine the tangent space
$T_{g}\cH\subset T_{g}\cM$.
\end{problem}

\subsection{Dimension of the nullspace  of the critical GJMS operator}
Assume $n$ even. For the critical GJMS operator $P_{\frac{n}{2}}$ on a $n$-dimensional
manifold with  $n$ even, the constant function will always be in $\ker P_{\frac{n}{2}}$. 

\begin{conjecture}
    For a generic conformal class in $\cT_M$, the nullspace of $\ker P_{\frac{n}{2}}$ consists of constant functions. 
\end{conjecture}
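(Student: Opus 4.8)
The plan is to mirror the genericity arguments for harmonic spinors of Maier and Ammann--Dahl--Humbert cited above, exploiting the special divergence structure $P_{\frac n2,g}=\delta S_{\frac n2,g}d$ from \eqref{Qk:curv}. This structure makes the constants automatically lie in $\ker P_{\frac n2,g}$, so $\dim\ker P_{\frac n2,g}\ge 1$ always, and the content of the conjecture is that the excess space $K_0:=\ker P_{\frac n2,g}\cap\{1\}^{\perp}$ of non-constant null solutions generically vanishes. Since $P_{\frac n2,g}$ is elliptic, formally self-adjoint and Fredholm of index $0$, the kernel dimension is upper semicontinuous in the conformal class; being bounded below by $1$, the set $\{[g]\in\cT_M:\dim\ker P_{\frac n2}=1\}$ is \emph{open}. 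The whole problem therefore reduces to proving this set is \emph{dense}: I would show that any conformal class carrying a non-constant null solution can be deformed, within $\cT_M$, so as to strictly decrease $\dim K_0$, and then iterate.

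First I would set up the deformation via first-order perturbation theory for the degenerate eigenvalue $0$. Writing $g_t$ for a path of metrics transverse to the conformal and diffeomorphism directions, the splitting of $0$ is governed at first order by the symmetric form $(\phi,\psi)\mapsto \langle \dot P\phi,\psi\rangle$ on $\ker P_{\frac n2,g}$, where $\dot P:=\frac{d}{dt}\big|_{0}P_{\frac n2,g_t}$. Differentiating $P_{\frac n2,g_t}(1)=0$ gives $\dot P(1)=0$, so the constant direction is automatically protected and the relevant object is the restriction of this form to $K_0$. It then suffices to establish the following surjectivity (transversality) statement: for each nonzero non-constant $\phi\in\ker P_{\frac n2,g}$ there is an admissible deformation $\dot g$ with $\langle\dot P_{\dot g}\phi,\phi\rangle\neq 0$. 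Granting this, a generic $\dot g$ makes the first-variation form nondegenerate on $K_0$, all the non-constant branches leave $0$ to first order, and $\dim K_0$ drops; an induction on $\dim K_0$, peeling off one direction at a time by localized perturbations as in the spinorial case, completes the density argument.

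The crux is this surjectivity statement, and I would reduce it to a unique-continuation principle. Because the conformally invariant pairing $\phi\mapsto\int_M\phi\,P_{\frac n2,g}\phi\,dv_g=\int_M\langle d\phi,S_{\frac n2,g}d\phi\rangle\,dv_g$ depends only on $[g]$, its first variation can be written as $\langle\dot P_{\dot g}\phi,\phi\rangle=\int_M\langle\dot g,T_\phi\rangle\,dv_g$ for a symmetric $2$-tensor $T_\phi$ (the stress-energy tensor of this higher-derivative energy) built polynomially from $\phi$, its derivatives, and the curvature. The pairing can be killed by every admissible $\dot g$ only if $T_\phi\equiv 0$, so I would compute the leading (top-order-in-$\phi$) part of $T_\phi$ and check that it is a nonzero algebraic expression in the highest derivatives of $\phi$ wherever those do not vanish. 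Strong unique continuation for the elliptic operator $P_{\frac n2,g}$ then guarantees that a non-constant solution has nonvanishing top derivatives on a dense open set, so $T_\phi\not\equiv0$ there, and a deformation $\dot g$ supported in a small ball in that set produces $\langle\dot P_{\dot g}\phi,\phi\rangle\neq0$. Localizing in this way also makes it transparent that $\dot g$ may be chosen trace-free and transverse, i.e.\ a genuine tangent vector to $\cT_M$.

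The hard part will be this third step for a general critical GJMS operator, on two counts. First, $P_{\frac n2,g}$ has no closed formula, so identifying $T_\phi$ and its leading symbol requires the ambient/tractor description of Fefferman--Graham and the variational machinery of Branson and Graham--Juhl; one should expect the computation to be organized by order of derivatives and to reduce, at top order, to the metric variation of an $\int|\nabla^{n/2}\phi|^2$-type energy. Second, one needs a strong unique continuation theorem for this order-$n$ operator with its specific lower-order curvature terms, which is classical for the Laplacian but delicate at higher order. I would first carry out the entire scheme for the Paneitz operator in dimension $n=4$, where $S_{2,g}$, the tensor $T_\phi$, and the relevant unique continuation are all explicit, and use that case as a template for the inductive ambient computation in general even dimension.
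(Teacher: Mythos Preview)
This statement is a \emph{conjecture} in the paper, listed in Section~7 (``Open problems''); the paper offers no proof. The only remark the authors make is that the conjecture holds when $M$ admits an Einstein metric of positive scalar curvature (in which case \eqref{eq:GJMS-Einstein} factors $P_{\frac{n}{2},g}$ as a product of shifted Laplacians with strictly positive shifts on non-constant functions, so the kernel reduces to constants). There is therefore no ``paper's own proof'' to compare your proposal against.

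Your outline is a reasonable strategy for attacking the conjecture, modeled on the Maier and Ammann--Dahl--Humbert genericity results for Dirac operators that the paper itself cites as motivation. The reduction to openness (upper semicontinuity of $\dim\ker$) plus density, and the density step via first-order perturbation of the zero eigenvalue governed by a stress-energy tensor $T_\phi$, is the standard template. You have also correctly flagged the two genuine obstacles: (i) for general even $n$ there is no closed formula for $P_{\frac{n}{2},g}$, so computing the metric variation $\dot P$ and extracting the leading part of $T_\phi$ requires substantial ambient or tractor machinery; and (ii) strong unique continuation for a $2k$-th order operator with variable lower-order terms is delicate and is not available off the shelf in the generality you need. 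Until those two ingredients are supplied, what you have is a plausible plan rather than a proof; the paper treats the statement accordingly, as open.
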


This conjecture is true if $M$ admits an Einstein metric of positive scalar curvature. 

We define the discriminant hypersurface $\cH_{\frac{n}{2}}$ as the set of 
of conformal classes for which the dimension of the nullspace of $P_{\frac{n}{2}}$ is at least $2$. 

\begin{conjecture}
For a generic conformal class in $\cH_{\frac{n}{2}}$, the nullspace of $P_{\frac{n}{2}}$ has dimension~$2$.
\end{conjecture}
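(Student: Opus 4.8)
The plan is to read $\cH_{\frac{n}{2}}$ as the first nontrivial stratum of a discriminant and to show, by a transversality argument, that the deeper stratum $\{[g]:\dim\ker P_{\frac{n}{2}}\geq 3\}$ has strictly higher codimension and is therefore avoided by generic conformal classes in $\cH_{\frac{n}{2}}$. Since the critical operator always annihilates the constants, I would first pass to the operator $Q_{g}$ obtained by restricting $P_{\frac{n}{2},g}$ to the $L^{2}(g)$-orthogonal complement of the constant functions; self-adjointness and $P_{\frac{n}{2},g}(1)=0$ show that this complement is preserved, and $\ker P_{\frac{n}{2},g}=\R\oplus\ker Q_{g}$. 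By the conformal invariance of $\ker P_{\frac{n}{2},g}$ established in Section~\ref{section: nodal sets}, the quantity $\dim\ker Q_{g}=\dim\ker P_{\frac{n}{2},g}-1$ is an invariant of $[g]$, and $\cH_{\frac{n}{2}}=\{[g]:\ker Q_{g}\neq 0\}$. The conjecture is thus equivalent to the assertion that zero is a \emph{simple} eigenvalue of $Q_{g}$ for a generic $[g]\in\cH_{\frac{n}{2}}$.

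First I would record that $g\mapsto Q_{g}$ is real-analytic, so that Kato's analytic perturbation theory applies: along any real-analytic path of conformal structures the eigenvalues of $Q_{g}$ split into real-analytic branches $\lambda_{i}(g)$ carrying real-analytic eigensections. A single branch meeting zero transversally is a codimension-one condition and cuts out the hypersurface $\cH_{\frac{n}{2}}$, while the simultaneous vanishing of two distinct branches is expected to be codimension two in $\cT_{M}$, hence codimension one inside $\cH_{\frac{n}{2}}$. Making these codimension counts rigorous is the content of the following transversality step.

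The main mechanism is a Sard--Smale argument in the spirit of Uhlenbeck's theorem on generic simplicity of Laplace eigenvalues. Deformations of the conformal structure are parametrized by transverse-traceless symmetric $2$-tensors $h$ (the York complement of the conformal and diffeomorphism directions). For a path $g_{t}$ with $\dot g=h$ and a normalized null-eigensection $u$ of $Q_{g}$, the crossing occurring at $\lambda=0$ kills the correction coming from the varying $L^{2}(g_{t})$-inner product, so the first variation of the eigenvalue reduces to
\[
\dot\lambda=\acou{\dot P_{\frac{n}{2}}\,u}{u},
\]
where $\dot P_{\frac{n}{2}}$ is the linearization of the critical GJMS operator in the direction $h$. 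On a multiplicity-$m$ zero-eigenspace one must then show that the map $h\mapsto(\acou{\dot P_{\frac{n}{2}}u_{i}}{u_{j}})_{1\le i,j\le m}$ is surjective onto all symmetric $m\times m$ matrices as $h$ ranges over transverse-traceless tensors; surjectivity forces the multiplicity to drop under a generic deformation, exactly as in the von Neumann--Wigner avoided-crossing picture, and a Baire-category argument then yields simplicity on a residual subset of $\cH_{\frac{n}{2}}$.

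The hard part will be the variational computation together with this surjectivity. Because $P_{\frac{n}{2}}$ is a high-order operator built from the Fefferman--Graham ambient metric, its linearization $\dot P_{\frac{n}{2}}$ is intricate; moreover conformal invariance confines the admissible $h$ to directions transverse to the conformal class, so the convenient device of multiplying by a localized bump (which only alters the conformal factor and hence leaves $\ker P_{\frac{n}{2}}$ untouched) is unavailable. Surjectivity therefore demands a nondegeneracy property for pairs of null-eigensections: one must rule out that the paired products $u_{i}u_{j}$ occupy a proper subspace annihilated by all transverse-traceless $h$. Establishing such a unique-continuation/nondegeneracy statement for the critical GJMS operator, for which no maximum principle is at hand, is the principal obstacle, and is precisely why the statement is offered here only as a conjecture.
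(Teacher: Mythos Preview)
The statement is a \emph{conjecture} in the paper and is presented without proof; it appears in Section~\ref{section: open problems} among the open problems, and the paper offers no argument for it. There is therefore nothing to compare your proposal against.

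Your write-up is not a proof but an outline of a plausible strategy, and you yourself say as much in the final paragraph. The reduction to the operator $Q_g$ on the orthogonal complement of the constants is sound, and the Uhlenbeck--Sard--Smale scheme you sketch is indeed the natural line of attack (and is the approach pursued for related statements in the sequel~\cite{CGJP} and in~\cite{CP}). You correctly isolate the genuine obstruction: establishing surjectivity of $h\mapsto(\langle \dot P_{\frac{n}{2}}u_i,u_j\rangle)$ onto symmetric matrices, where $h$ is restricted to transverse-traceless directions. The conformal-factor trick is unavailable precisely because $\ker P_{\frac{n}{2}}$ is a conformal invariant, and for a high-order operator with no maximum principle there is no off-the-shelf unique continuation result to supply the needed nondegeneracy. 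Until that surjectivity is proved, the argument remains a heuristic, which is exactly the status the paper assigns to the statement.
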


It seems interesting to study the geometry and topology of the various $\cH_k$ and
their complements in the space of all conformal classes of Riemannian
metrics on $M$.

%%%%%%

\subsection{Negative eigenvalues and topology of spaces of metrics}

Recall that it was shown in Proposition \ref{neg:eig:large} that
on any compact manifold of dimension $n\geq 3$, for any $m>0$ there
exist metrics $g$ for which the Yamabe operator $P_{1,g}$ has at least $m$
negative eigenvalues.  We have also constructed examples of Riemannian 
manifolds for which there are analogous results for some higher order 
GJMS operators (cf. Theorem~\ref{CriticalGJMS:hyperbolic}  and 
Theorem~\ref{Paneitz:neg:heisenberg}).  

%Below we formulate the corresponding problem
%for conformally covariant operators $P_{k,g}$, $k>1$.

\begin{problem}\label{infinite:finite}
Let $k>1$, and let $M$ be a compact manifold of dimension $n\geq 3$. 
Can we find for every $m\in\N$ a metric $g_m$ on $M$ such that 
$P_{k,g_m}$ has at least $m$ negative eigenvalues?
\end{problem}

We remark that if the number of negative eigenvalues of
$P_{k,g}$, $k>1$, is uniformly from bounded above for every metric $g$ on
$M$, then the smallest such bound would be a {\em topological} invariant
of $M$.

On Yamabe-negative manifolds (which do not admit metrics of
nonnegative scalar curvature), we know that in every conformal class
there exists at least {\em one} negative eigenvalue of $P_{1,g}$.  For
such manifolds, the following question formulated in \cite{BD} seems
natural:

\begin{problem}\label{neg:below}
Let $M$ be a Yamabe-negative compact manifold of dimension $n\geq
3$.  Does there exist an integer $m_{0}\geq 2$ such that in {\em every}
conformal class on $M$, the Yamabe operator $P_{1,g}$ has {\em at least} $m_{0}$ negative
eigenvalues?
\end{problem}

It is known from the work of Gromov-Lawson \cite{GL,Ros06} that, on
many manifolds of dimension $n\geq 5$ (and on some manifolds of
dimension $4$), the space of Yamabe-positive metrics (with positive
scalar curvature, or equivalently without negative eigenvalues of
$P_{1,g}$) can have infinitely many connected components.

On the other hand, Lohkamp (\cite{Lo92}; see also \cite{Kat}) showed
that the space of metrics with negative scalar curvature is
connected and has trivial homotopy groups.  Therefore, the following
seems natural:

\begin{problem}\label{topology:negative}
Let $M$ be a compact manifold of dimension $n\geq 3$. Given integers $k$ and $m$, describe
the topology of the space of all metrics $g$ for which the GJMS operator $P_{k,g}$ has at most $m$
negative eigenvalues (i.e., $\lambda_{m+1}(P_{k,g})\geq 0$). In particular, is that space
connected?
\end{problem}

%%%%%%%%%%%%%%

\appendix \setcounter{section}{-1}
\renewcommand{\thesection}{A}

\section*{Appendix by A.\ Rod Gover and Andrea Malchiodi.\\ Non-critical Q curvature
  prescription.\\ Forbidden functions arising from non-trivial nullspace.  
 }\label{section:appendix}

\subsection{Background}
Some literature and background concerning curvature prescription was
mentioned in Section~\ref{section:scalar:sign}.  Concerning the
problem of prescribing $Q=Q_{\frac{n}{2},g}$ on even manifolds: in
\cite{Gov10,MalchSIGMA} it was shown that if the manifold and
conformal structure is such that the related critical GJMS operator
$P_{\frac{n}{2}}$ has non-trivial kernel (i.e., it contains
non-constant functions), then global considerations show that large
classes of functions cannot arise as the $Q$-curvature for some metric
in the given conformal class.  Due to the curious properties of
Branson's $Q$-curvature it turns out that the arguments required in
\cite{Gov10} are mainly of a linear or quadratic nature and benefit
from a conformal invariant identified in \cite{BGaim,BG}.

The conformal prescription problem for the other (``non-critical'')
$Q$-curvatures is rather different (having polynomial instead of
exponential non-linearities).  Nevertheless we work with the cases
$k\neq \frac{n}{2}$ here (so we exclusively consider the non-critical
$Q$-curvatures) and show that there are again global obstructions to
prescription of certain functions, arising from the presence of
non-trivial GJMS kernel.  In the following $k$ is an integer from the
usual range for the GJMS operators except that we shall suppose
henceforth that $2k\neq n$ (so $k$ is a positive integer with
$2k\notin \{n,n+2,\cdots \}$).

Recall the expression (\ref{Qk:curv}) defining the $Q$-curvatures and that 
in the case $k=1$ we have $Q_1=
R_{g}/2(n-1)$, where $R_{g}$ is the usual scalar curvature. 
In general the quantity $Q_k$ in \nn{Qk:curv} is called the order $2k$
 (non-critical) $Q$-curvature; for simplicity we shall refer to this
as simply a $Q$-curvature.  As in the body of the article, for  $Q_k$ we may write $Q_{k,g}$ to emphasise
the dependence on the metric $g$; we similarly treat related
quantities.

\subsection{The Problem}

The $Q$-prescription problem is described in (\ref{map}).  The partial
differential equation governing this follows from the conformal
transformation of the $P_k$ operator, as discussed in Section
\ref{section:Qcurv}. We summarise the facts from there in a form
convenient for our current purposes.

For the conformal transformation of $P_{k,g}$ we have
\begin{equation}\label{cov}
P_{k,{\widehat{g}}} e^{\frac{2k-n}{2}\om}u
= e^{-\frac{2k+n}{2}\om}P_{k,g} u,
\end{equation}
where $\widehat{g}=e^{2\om}g$, $\om,u \in C^\infty(M,\R)$.
So if we take, in particular, $u$ to be the positive function 
$u=e^{\frac{n-2k}{2}\om}$,
then $ e^{\frac{2k-n}{2}\om}u =1$, 
and so we conclude
\begin{equation}\label{preqn}
P_{k,{\widehat{g}}} 1=u^{\frac{n+2k}{2k-n}} P_{k,g} u .
\end{equation}

Putting \nn{Qk:curv} and \nn{preqn} together we obtain the non-linear
equation governing \nn{map}:
\begin{equation}\label{genYbe}
\left(\delta S_{k,g}d +\dfrac{n-2k}2Q_{k,g}\right)u=\dfrac{n-2k}2 Q_{k,\widehat{g}}u^{\frac{n+2k}{n-2k}},
\end{equation}
where $u$ is an arbitrary positive function. This generalises the
well-known scalar curvature prescription equation which is the $k=1$
special case.

\subsection{Forbidden functions}
Denote by $\mathcal{C}$ a conformal class of metrics on $M$.  
We are interested in what functions we can, or cannot, land on with $
Q_{k,\widehat{g}}$, where $\widehat{g}\in\mathcal{C}$. Let us fix $k$ and drop it 
from the notation. So
henceforth $P=P_k$ and $Q=Q_k$ for some fixed $k$ with $2k\in 2\mathbb{Z}\setminus
\{n,n+2,n+4,\ldots \}$. 

A first obstruction one can obtain rather easily, as in \cite{KW}, is
that if for some $g \in \mathcal{C}$ $Q_g$ has a given sign, then it is not
possible to prescribe a function with the opposite sign: this follows
immediately by integrating \eqref{genYbe}. We notice first that for
the case $k=1$ the sign of the Yamabe invariant
$$
  \inf_{\widehat{g} \in \mathcal{C}} \frac{\int R_{\, \widehat{g}} \, dv_{\widehat{g}}}{\left( \int 
  \, dv_{\widehat{g}} \right)^{\frac{n-2}{n}}}
$$ 
coincides with the sign of the first eigenvalue of the conformal
  Laplacian and determines uniquely the possible sign of the scalar
  curvature for the metrics in $\mathcal{C}$. This is not the case in general
  for larger $k$, due to a lack of maximum principle.

We have next the following observation, more peculiar to the presence
of a kernel.  Here and subsequently we write $ \ker P_{g}$
for the kernel (or nullspace) of $P_g$.

\begin{proposition}\label{basic} Consider a closed manifold $M$ equipped with a conformal structure $\mathcal{C}$, and 
let $g \in \mathcal{C}$.  If $0\neq u\in \ker P_{g}$, then $u$ is not
in the range of $Q$. That is, $u\neq Q_{\widehat{g}}$ for all
$\widehat{g}\in \mathcal{C}$.
\end{proposition}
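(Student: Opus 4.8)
The plan is to argue by contradiction, exploiting the formal self-adjointness of $P=P_{k,g}$ together with the strict positivity built into any conformal factor. Suppose that $u=Q_{\widehat g}$ for some $\widehat g=e^{2\om}g\in\mathcal{C}$, and set $v:=e^{\frac{n-2k}{2}\om}$, a smooth and everywhere strictly positive function on $M$ (I rename the positive conformal factor, denoted $u$ in \nn{genYbe}, to $v$ so as not to clash with the kernel element). Reading the governing equation \nn{genYbe} with $v$ in this role, and recalling from \nn{Qk:curv} that its left-hand side is precisely $P_{k,g}v$, we have
\begin{equation*}
P_{k,g}\,v=\frac{n-2k}{2}\,Q_{\widehat g}\,v^{\frac{n+2k}{n-2k}}.
\end{equation*}

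First I would pair this identity in $L^2(M,dv_g)$ against the kernel element $u$. Since $M$ is closed and $P_{k,g}$ is formally self-adjoint, and since $u\in\ker P_{k,g}$, the left-hand side contributes nothing:
\begin{equation*}
\int_M (P_{k,g}v)\,u\,dv_g=\int_M v\,(P_{k,g}u)\,dv_g=0.
\end{equation*}
Consequently the right-hand side must vanish too, so
\begin{equation*}
0=\frac{n-2k}{2}\int_M Q_{\widehat g}\,u\,v^{\frac{n+2k}{n-2k}}\,dv_g.
\end{equation*}
I would then substitute the standing assumption $Q_{\widehat g}=u$, which turns the integrand into $u^2\,v^{\frac{n+2k}{n-2k}}$.

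The decisive step is a positivity argument. Because $v>0$ everywhere, the weight $v^{\frac{n+2k}{n-2k}}$ is strictly positive regardless of the sign of the exponent, while $u^2\ge 0$ with $u\not\equiv 0$; hence $\int_M u^2\,v^{\frac{n+2k}{n-2k}}\,dv_g>0$. As the standing hypothesis $2k\neq n$ forces $\tfrac{n-2k}{2}\neq 0$, the right-hand side is strictly nonzero, contradicting the vanishing forced by self-adjointness. This rules out $u=Q_{\widehat g}$ for every $\widehat g\in\mathcal{C}$ and proves the proposition.

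I do not anticipate a genuine analytic obstacle: the entire content lies in choosing the correct test function, namely the kernel element itself, and in observing that prescribing $u$ as the target makes the paired integrand $u\cdot Q_{\widehat g}=u^2$ pointwise nonnegative. It is exactly this feature that lets the argument succeed with no sign hypothesis on $u$. For a general target one only controls the sign of $u\cdot Q_{\widehat g}$ when $Q_{\widehat g}$ shares the strict sign of $u$, which is precisely the hypothesis under which the sharper obstruction of Theorem~\ref{main} will be obtained.
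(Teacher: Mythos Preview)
Your proof is correct and follows essentially the same approach as the paper's: both argue by contradiction, pair the prescription identity against the kernel element $u$ via formal self-adjointness, and then substitute $Q_{\widehat g}=u$ to obtain $\int u^2\cdot(\text{strictly positive weight})=0$, a contradiction. The only cosmetic difference is that the paper carries out the pairing in the $\widehat g$-metric (using $e^{\frac{2k-n}{2}\omega}u\in\ker P_{\widehat g}$ and testing against $1$), whereas you work directly in the $g$-metric using \nn{genYbe}; unwinding the conformal factors shows the two integrals coincide.
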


\begin{proof} Suppose with a view to contradiction that
$\widehat{g}\in \mathcal{C}$ and $Q_{\widehat{g}}=u$. Since $g,\widehat{g}\in \mathcal{C}$
we have $\widehat{g}=e^{2\om}g$ for some $\om\in C^\infty (M,\R)$.

Now from \nn{cov}, if $ u\in \ker P_{g}$ then $e^{f} u \in \ker
P_{\widehat{g}}$, where $f= {\frac{2k-n}{2}\om}$. So, using that
$P_{\widehat{g}}$ is formally self-adjoint, it follows that for any
function $v$ 
$$\int u e^f P_{\widehat{g}} v \, dv_{\widehat{g}} =0.
$$
Thus, taking $v=1$,  this shows
$$0= \frac{2}{n-2k}\int u e^f P_{\widehat{g}} 1 \, dv_{\widehat{g}} =\int u e^f
Q_{\widehat{g}} \, dv_{\widehat{g}}  = \int u e^f u \, dv_{\widehat{g}}  =\int u^2 e^f \, dv_{\widehat{g}}  .$$ This is a contradiction
since $ u^2 e^f$ is a non-zero non-negative function.
\end{proof}

More generally essentially the same argument shows that we cannot have
$Q_{\widehat{g}}$ equal $s_u$, where the latter is any function that
has the same or opposite strict sign as $u$: Suppose with a view to
contradiction $Q_{\widehat{g}}=s_u$. Then
$$
0= \frac{2}{n-2k}\int u
e^f P_{\widehat{g}} 1  \, dv_{\widehat{g}} 
 =\int u e^f s_u  \, dv_{\widehat{g}} 
= \int  e^f u s_u \, dv_{\widehat{g}}   ,
$$ 
which is impossible. Thus we have the following result. 

\begin{theorem}\label{main}
Consider a closed manifold $M$ equipped with a conformal structure $\mathcal{C}$, and 
let $g \in \mathcal{C}$.  Suppose there exists $u\in \ker P_{g}\setminus \{ 0\}$. 
Then for any function $s_u$ on $M$ with the same  or opposite strict sign as $u$, $s_u$ is not in the
range of $Q$. That is $s_u\neq Q_{\widehat{g}}$, for all $\widehat{g}\in
\mathcal{C}$.
\end{theorem}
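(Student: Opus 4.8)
The plan is to argue by contradiction, extending the pairing argument already used for Proposition~\ref{basic} to a function $s_u$ that need only share, or exactly oppose, the sign of $u$. Suppose that $s_u = Q_{\widehat{g}}$ for some $\widehat{g} = e^{2\om}g \in \mathcal{C}$, with $\om \in C^\infty(M,\R)$. The whole strategy rests on manufacturing an element of $\ker P_{\widehat{g}}$ out of $u$ and then testing the self-adjoint operator $P_{\widehat{g}}$ against the constant function $1$, so that $P_{\widehat{g}}(1)$ turns into $Q_{\widehat{g}}$ through \eqref{Qk:curv}.

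First I would invoke the conformal covariance \eqref{cov}: setting $f := \frac{2k-n}{2}\om$, the identity $P_{\widehat{g}}\,e^{f}u = e^{-\frac{2k+n}{2}\om}P_{g}u$ shows that $u \in \ker P_{g}$ forces $e^{f}u \in \ker P_{\widehat{g}}$. Since the GJMS operators are formally self-adjoint, for every $v \in C^\infty(M,\R)$ we get
\begin{equation*}
\int_M u\,e^f\,P_{\widehat{g}} v \, dv_{\widehat{g}} = \int_M (e^{f}u)\,P_{\widehat{g}} v \, dv_{\widehat{g}} = \int_M \bigl(P_{\widehat{g}}(e^{f}u)\bigr)\, v \, dv_{\widehat{g}} = 0 .
\end{equation*}
Choosing $v = 1$ and using $Q_{\widehat{g}} = \frac{2}{n-2k}P_{\widehat{g}}(1)$ together with the assumed equality $Q_{\widehat{g}} = s_u$, this collapses to
\begin{equation*}
0 = \frac{2}{n-2k}\int_M u\,e^f\,P_{\widehat{g}}(1) \, dv_{\widehat{g}} = \int_M u\,e^f\,Q_{\widehat{g}} \, dv_{\widehat{g}} = \int_M u\,s_u\,e^f \, dv_{\widehat{g}} .
\end{equation*}

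The contradiction then comes entirely from the sign hypothesis. By assumption $s_u$ has the same or the opposite strict sign as $u$, so the product $u\,s_u$ is everywhere nonnegative (resp.\ nonpositive) and is strictly positive (resp.\ strictly negative) exactly on the open set $\{u \neq 0\}$; meanwhile the conformal factor $e^f$ is strictly positive throughout $M$. Because $u$ is not identically zero, $\{u \neq 0\}$ is a nonempty open set and hence has positive measure, so the integrand $u\,s_u\,e^f$ has one fixed strict sign on a set of positive measure and vanishes elsewhere. Its integral is therefore strictly nonzero, contradicting the displayed identity. The only point requiring care is the bookkeeping at the zero locus of $u$: one must confirm that ``same or opposite strict sign'' really prevents $u\,s_u$ from changing sign, so that no cancellation can occur in the final integral. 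Everything else is a verbatim transcription of the $v=1$ computation from Proposition~\ref{basic}.
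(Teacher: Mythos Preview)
Your argument is correct and follows essentially the same route as the paper: the paper also supposes $Q_{\widehat{g}}=s_u$, uses the conformal covariance \eqref{cov} to move $u$ into $\ker P_{\widehat{g}}$, applies self-adjointness against $v=1$ to obtain $\int e^f u\, s_u \, dv_{\widehat{g}} = 0$, and derives the contradiction from the sign hypothesis. Your write-up is in fact slightly more explicit than the paper's in justifying why the last integral cannot vanish.
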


Observe that if there exists $u$, as in the Theorem, then there is a
huge class of functions satisfying the conditions on $s_u$: for
example $e^f u^p$ where $p$ is an odd positive integer and $f\in C^\infty(M,\R)$.
We record this for emphasis.
\begin{corollary}\label{big}
If $P_g$ has non-trivial kernel then there is an infinite dimensional
space of functions disjoint from $\mathcal{R}(Q)$.
\end{corollary}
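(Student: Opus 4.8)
The plan is to deduce this directly from Theorem~\ref{main}, so that the only real work is to exhibit an honestly infinite-dimensional family sitting inside the forbidden set. First I would use the hypothesis to fix a function $0\neq u\in\ker P_{g}$. For any $f\in C^{\infty}(M,\R)$ and any odd positive integer $p$, the function $e^{f}u^{p}$ has exactly the same strict sign as $u$: indeed $e^{f}>0$ everywhere, and $t\mapsto t^{p}$ preserves sign and vanishes only at $t=0$, so $e^{f}u^{p}$ is positive (resp.\ negative, resp.\ zero) precisely where $u$ is. Theorem~\ref{main} then applies and yields $e^{f}u^{p}\notin\mathcal{R}(Q)$.

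Next I would organize these forbidden functions into an infinite-dimensional family. Taking $p=1$ for concreteness, I consider the assignment $f\mapsto e^{f}u$. Since $u\not\equiv 0$ and $u$ is smooth, there is a nonempty open set $U\subset M$ on which $u$ never vanishes. Restricting $f$ to the infinite-dimensional space $C^{\infty}_{c}(U)$ of smooth functions compactly supported in $U$, the map $f\mapsto e^{f}u$ is injective: if $e^{f_{1}}u=e^{f_{2}}u$ then $e^{f_{1}}=e^{f_{2}}$ on $U$ (where $u\neq 0$), hence $f_{1}=f_{2}$ on $U$, and since both are supported in $U$ they agree everywhere. Thus $\{e^{f}u:f\in C^{\infty}_{c}(U)\}$ is an infinite-dimensional set of functions, all lying outside $\mathcal{R}(Q)$, which is the assertion of the corollary.

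The one point to handle with care is the meaning of ``infinite-dimensional space'': the forbidden set is not a linear subspace of $C^{\infty}(M,\R)$, since a single strictly-signed function produces, under linear combinations, functions that change sign, so one should not try to realize the forbidden set as a linear subspace. The genuine content is therefore that the forbidden set receives an injection from the infinite-dimensional parameter space $C^{\infty}_{c}(U)$, and beyond Theorem~\ref{main} the only thing to verify is this injectivity, which rests on the elementary observation that $u$ is nonvanishing on some open set. No further input is needed, and I expect this interpretive step, rather than any computation, to be the crux.
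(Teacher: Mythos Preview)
Your proposal is correct and follows essentially the same approach as the paper: the paper's argument is precisely the observation (stated just before the corollary) that the functions $e^{f}u^{p}$, for $f\in C^{\infty}(M,\R)$ and $p$ an odd positive integer, all share the strict sign of $u$ and hence are forbidden by Theorem~\ref{main}. You supply the additional care---the injectivity of $f\mapsto e^{f}u$ on an open set where $u\neq 0$, and the remark that ``infinite-dimensional space'' should be read as an infinite-dimensional family rather than a linear subspace---that the paper leaves implicit.
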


\subsection{Constraints on $\mathcal{R}(Q)$}\label{constraints}

To prove Theorem \ref{main} we used that given $u\in \ker P_g$ then for any 
$\widehat{g}\in \mathcal{C}$ we have 
\begin{equation}\label{int}
\int u e^f Q_{\widehat{g}} \, dv_g =0 
\end{equation} 
for some real function $f$, which depends on $\widehat{g}$. In fact if
$\widehat{g}=e^{2\om} g$ then $f= \frac{n+2k}{2}\om$, but the details
are not important. The key point here is that $e^f$ is a strictly
positive function, thus for $u$ non-zero the display captures some
non-trivial constraint on $\mathcal{R}(Q)$ as demonstrated in Theorem
\ref{main} and its Corollary.

Given elements $u\in \ker P_g$ and $g\in \mathcal{C}$, consider the linear form
$I_u^g: C^\infty (M,\R)\to \mathbb{R}$ defined by 
$$
I_{u}^{g}(v)= \int u v \, dv_g  \qquad \forall v \in C^\infty (M,\R).
$$

Now let us fix some $g\in \mathcal{C}$, and 
for the moment also fix some $u\in \ker P_g$.  From
\nn{int}, and the conformal transformation of the standard metric
measure, we have that if $v=Q_{\widehat{g}}$, for some $\widehat{g}\in
\mathcal{C}$, then there exists $g'\in \mathcal{C}$ such that
\begin{equation}\label{orth}
\int uv \, dv_{g'}=0.
\end{equation}
Thus 
$$
\mathcal{R}(Q)\subseteq \bigcup_{g' \in \mathcal{C}} \mathcal{Z}(I^{g'}_u) ,
$$ where $\mathcal{Z}(I^{g'}_u)$ denotes the kernel of the map
$I^{g'}_u$. This holds for all elements of $\ker P_g$, thus we have
\begin{equation}\label{rhs}
\mathcal{R}(Q)\subseteq \bigcap_{u'\in \ker P_g}
\left[ \bigcup_{g' \in \mathcal{C}} \mathcal{Z}(I^{g'}_{u'})\right] .
\end{equation}
By definition $\mathcal{R}(Q)$ depends only on the conformal
structure. On the other hand we had fixed $g\in \mathcal{C}$ to describe the
right-hand-side here. A different choice would result in each of the
elements $u'\in \ker P_g $ being replaced by a positive
function multiple $e^f u'$, with the same function $e^f$ for all
elements of $\ker P_g$. Examining \nn{orth}, we see that this
factor $e^f$ may be absorbed by moving to a conformally related
measure. Since we average over all such in the right-hand-side of
\nn{rhs} it is clear that in fact this function space is independent of
$g$, and depends only on $\mathcal{C}$.

Since $\ker P_g$ is finite dimensional and $I^g_{u'}$ is
linear in its dependence on $u'\in \ker P_g$ we obtain the following
refinement of the above.

\begin{theorem}\label{consthm}
On a closed conformal manifold $(M,\mathcal{C})$ let $g\in \mathcal{C}$. Then 
 $$ 
\mathcal{R}(Q)\subseteq \mathcal{I}= \bigcap_{i=1}^{\ell}
\left[ \bigcup_{g' \in c} \mathcal{Z}(I^{g'}_{u_i})\right] .
$$ where $\ell=\dim \ker P_g$, and $u_1,\cdots ,u_\ell$ is a
basis for $\ker P_g$. Furthermore the function space $\mathcal{I}$ is independent of the choice of $g\in \mathcal{C}$ and
the choice of basis $\{u_1,\cdots,u_\ell\}$.
\end{theorem}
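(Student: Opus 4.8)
The plan is to extract the inclusion directly from the identity \nn{int} and then to make the independence statements manifest by arguing at the level of the whole subspace $\ker P_g$ rather than a chosen basis. First I would isolate the one computation that drives everything. For $u\in\ker P_g$ and any $\widehat g=e^{2\om}g\in\mathcal C$, identity \nn{int} reads $\int u\,e^{f}Q_{\widehat g}\,dv_g=0$ with $f=\tfrac{n+2k}{2}\om$, and the decisive feature is that $f$ depends on $\widehat g$ but \emph{not} on $u$. Since $e^{f}$ is strictly positive, $e^{f}\,dv_g$ is itself a metric measure in $\mathcal C$: taking $\om'=\tfrac{n+2k}{2n}\om$ and $g'=e^{2\om'}g$ gives $dv_{g'}=e^{f}\,dv_g$, so $\int u\,Q_{\widehat g}\,dv_{g'}=0$, i.e. $Q_{\widehat g}\in\mathcal Z(I^{g'}_{u})$ for this single $g'$.

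Because $g'$ is the same for every $u\in\ker P_g$, this says $Q_{\widehat g}$ is $L^2(dv_{g'})$-orthogonal to the entire subspace $\ker P_g$. In particular $Q_{\widehat g}\in\mathcal Z(I^{g'}_{u_i})\subseteq\bigcup_{g''\in\mathcal C}\mathcal Z(I^{g''}_{u_i})$ for each basis vector $u_i$, whence $Q_{\widehat g}\in\mathcal I$; as $\widehat g$ was arbitrary this is the inclusion $\mathcal R(Q)\subseteq\mathcal I$. I would emphasise the uniformity of $g'$ as the crux, since it upgrades the statement to $\mathcal R(Q)\subseteq\bigcup_{g'\in\mathcal C}(\ker P_g)^{\perp_{g'}}$, and it is this subspace-level description that I would use to read off both independence claims.

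For independence of the basis, orthogonality in $L^2(dv_{g'})$ to $\ker P_g$ depends only on the subspace, so $\bigcap_{i}\mathcal Z(I^{g'}_{u_i})=(\ker P_g)^{\perp_{g'}}$ is intrinsic; unioning over $g'\in\mathcal C$ then gives a constraint set free of any choice of basis. For independence of $g$, I would invoke the conformal covariance \nn{cov}: replacing $g$ by $\tilde g=e^{2\sigma}g$ multiplies every element of $\ker P_g$ by the single positive function $e^{h}$, $h=\tfrac{2k-n}{2}\sigma$. Then $I^{g'}_{e^{h}u}(v)=\int u\,v\,e^{h}\,dv_{g'}=I^{g''}_{u}(v)$, where $dv_{g''}=e^{h}\,dv_{g'}$ is again the measure of a metric $g''\in\mathcal C$, and $g'\mapsto g''$ is a bijection of $\mathcal C$; hence $\bigcup_{g'}\mathcal Z(I^{g'}_{e^{h}u})=\bigcup_{g'}\mathcal Z(I^{g'}_{u})$ and the whole construction is unchanged.

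The main obstacle, I expect, is the bookkeeping that makes the two independence statements genuinely hold, namely checking that every positive smooth factor appearing---from the prefactor $e^{f}$ in \nn{int}, from the conformal scaling in \nn{cov}, and from a change of basis---can be absorbed into $dv_{g'}$ as the measure of some metric in $\mathcal C$, and that the induced reparametrisation of $\mathcal C$ is a bijection so that the union over $g'$ is preserved. This is exactly why the argument must be carried through the subspace $\ker P_g$: for a single vector $u_i$ the set $\bigcup_{g'}\mathcal Z(I^{g'}_{u_i})$ is merely $\{v:u_i v\text{ has no strict sign}\}$, which a change of basis can alter, so it is the simultaneous orthogonality supplied by the uniform $g'$ that renders the intersection intrinsic. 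I would therefore present $\bigcup_{g'\in\mathcal C}(\ker P_g)^{\perp_{g'}}$ as the primary object and deduce the stated basis formulation from it.
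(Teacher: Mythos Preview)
Your argument for the inclusion $\mathcal{R}(Q)\subseteq\mathcal{I}$ and for independence of the choice of $g\in\mathcal{C}$ is the paper's own: both start from \nn{int}, absorb the positive prefactor $e^{f}$ into the measure to obtain \nn{orth}, and handle a change of reference metric by observing that every kernel element is multiplied by a single common positive function, which is again absorbed into $dv_{g'}$, the union over $g'\in\mathcal{C}$ being unaffected.

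Where you go beyond the paper is on basis independence. The paper passes from the intersection over all $u'\in\ker P_g$ in \nn{rhs} to the basis version $\mathcal{I}$ by simply citing linearity of $u'\mapsto I^{g'}_{u'}$. You correctly note that because the union over $g'$ sits \emph{inside} the intersection, the individual factor $\bigcup_{g'}\mathcal{Z}(I^{g'}_{u_i})=\{v:u_iv\text{ has no strict sign}\}$ depends on the vector $u_i$ and not merely on its span, and you remedy this by working with the uniform-$g'$ object $\bigcup_{g'\in\mathcal{C}}(\ker P_g)^{\perp_{g'}}$, which is manifestly basis-free and still contains $\mathcal{R}(Q)$ thanks to the uniformity of $g'$ that you emphasise. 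This is a genuine sharpening of the paper's discussion.

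One caveat on your last sentence: the set $\bigcup_{g'}(\ker P_g)^{\perp_{g'}}$ is in general \emph{strictly} contained in $\mathcal{I}=\bigcap_i\bigl[\bigcup_{g'}\mathcal{Z}(I^{g'}_{u_i})\bigr]$, since the union and intersection do not commute. From your object you therefore deduce $\mathcal{R}(Q)\subseteq\mathcal{I}$ for \emph{every} choice of basis, but not that the sets $\mathcal{I}$ for different bases coincide. The paper is itself informal on precisely this point; your subspace-level formulation is the clean way to make the independence claim precise, and you should present it as the conclusion rather than as a stepping stone back to the basis version.
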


An important special case is prescribing constant $Q$-curvature. This is
related to the Yamabe problem which seeks to find within a conformal
class a metric with constant scalar curvature.  Note that if there
is a metric $g\in \mathcal{C}$ such that $Q_g={\rm constant}\neq 0$ then it is
clear from \nn{orth} that any non-zero element of $\ker P_g$
must change sign on $M$. But this sign change property is independent
of $g\in \mathcal{C}$. Thus by contrapositive we have the following.

\begin{theorem}\label{constants}
On a connected conformal manifold $(M,\mathcal{C})$ let $g\in \mathcal{C}$. Suppose that
there is a non-zero function $u\in \ker P_g$ that is 
either everywhere non-negative, or
everywhere non-positive. Then there is no non-zero constant function in
$\mathcal{R}(Q)$.
 \end{theorem}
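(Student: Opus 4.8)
The plan is to argue by contradiction, working directly from the orthogonality relation \nn{orth} exactly as foreshadowed in the remark preceding the statement, so that the result is the promised contrapositive. Suppose, with a view to contradiction, that some non-zero constant $c$ lies in $\mathcal{R}(Q)$, say $c=Q_{\widehat{g}}$ for some $\widehat{g}\in\mathcal{C}$. First I would apply \nn{orth} to the given $u\in\ker P_g$ and to $v=c$: this yields a metric $g'\in\mathcal{C}$ with $\int_M u\,c\,dv_{g'}=0$. Since $c$ is a non-zero real constant it pulls out of the integral, leaving $\int_M u\,dv_{g'}=0$.

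The contradiction then comes purely from the sign hypothesis on $u$. Because $u$ is smooth, not identically zero, and everywhere non-negative (the everywhere non-positive case is identical after replacing $u$ by $-u$), it is strictly positive on a non-empty open set, on which the Riemannian volume measure $dv_{g'}$ is strictly positive; hence $\int_M u\,dv_{g'}>0$, contradicting the vanishing just obtained. This excludes any non-zero constant from $\mathcal{R}(Q)$ and proves the theorem.

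The one point deserving care — though it is genuinely mild — is the passage from \nn{int} to \nn{orth}, namely the observation that the strictly positive conformal factor $e^f$ multiplying $u$ in \nn{int} can be absorbed into a conformally rescaled volume measure $dv_{g'}$ without disturbing the sign-definiteness of the integrand. Once that is in hand the remaining argument is a bare positivity-of-integral statement and requires nothing further; in particular it does \emph{not} need the \emph{strict} sign-definiteness of $u$ demanded by Theorem \ref{main}, which is precisely why one reasons from \nn{orth} rather than quoting Theorem \ref{main} (a merely non-negative $u$ need not have the same strict sign as $c$ at its zeros). The connectedness of $M$ plays no essential role in the positivity estimate itself, beyond ensuring that ``constant function'' is read as a single global value $c$ that may be factored out of the integral.
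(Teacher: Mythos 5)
Your proof is correct and takes essentially the same approach as the paper: the paper likewise deduces from \nn{orth} that a non-zero constant in $\mathcal{R}(Q)$ would force every non-zero element of the kernel to satisfy $\int u \, dv_{g'}=0$ for some $g'\in\mathcal{C}$ and hence to change sign, stating the theorem as the contrapositive. Your only (harmless) variation is to apply \nn{orth} directly to the given $u\in\ker P_g$ and argue by contradiction, which bypasses the paper's intermediate remark that the sign-change property is independent of the choice of metric in $\mathcal{C}$; your observation that Theorem \ref{main} cannot simply be quoted (since a merely non-negative $u$ need not have a strict sign) also matches the paper's reason for treating this case separately.
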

\begin{remark}
Note the result in the Proposition is interesting only if $u$, as
described there, has a non-trivial zero locus. Otherwise if 
$u$ is strictly positive or strictly
negative then it follows easily from \eqref{cov} that there is a 
metric $\widehat{g}\in
\mathcal{C}$ such that $P_{\widehat{g}}$ annihilates (all) constant
functions and so the constant function $0$ is in $\mathcal{R}(Q)$, see Proposition \ref{Pk:kernel:const}.
% But then from \nn{Qk:curv} we see that $Q_{\widehat{g}}=0$
%so the constant function $0$ is in $\mathcal{R}(Q)$
\end{remark}

\subsection{Final comments}

Although we have focussed on the GJMS operators the results here apply
more widely. We could replace the GJMS operators with any conformally
covariant and formally self-adjoint 
operator $\overline{P}_k$ (on functions) of the
same conformal bidegree and taking the form 
$$
\overline{P}_k= \delta \overline{S}_kd+ \overline{Q}_k,
$$ (cf., \nn{Qk:curv}) with $\overline{Q}_k$ non-trivial. Then the
$\overline{Q}_k$ prescription theory would mirror that above.
Alternative conformal powers of the Laplacian, with these properties,
are described in \cite{Gsrni} (due to M.G. Eastwood and the first
author). 

Similarly in certain circumstances the restriction $2k\notin \{
n+2,n+4,\cdots \}$, on even manifolds, can be relaxed. For example
this is the case if the manifold is locally conformally flat or if it
is locally conformally Einstein \cite{Gov06}; in both settings there is a
class of differential operators which extends the GJMS family to these
orders.

\section*{Acknowledgements}
This paper has its origin from discussions between J.D.\ and
A.R.G.\ during the 2010 Banff workshop on geometric scattering
theory. They wish to thank BIRS and the workshop organizers for their
hospitality. R.P.\ wishes to thank McGill University for its
hospitality during his various visits to Montreal. In addition, the
authors wish to thank Alice Chang, Penfei Guan, Colin Guillarmou, Niky
Kamram, Rafe Mazzeo, Peter Sarnak, Richard Schoen, and Paul Yang for
useful discussions related to he subject matter of this paper.

% % % % % % % % %
%%%%%%%%%%%%%

\end{document}